\documentclass[10pt,twoside]{article}
\usepackage{mathrsfs}
\usepackage{amsmath}
\usepackage{amssymb}
\usepackage{fancyhdr}
\usepackage{latexsym}
\usepackage{bbding}
\usepackage{mathrsfs}
\usepackage{wasysym}
\usepackage{multicol,graphics}

\setcounter{MaxMatrixCols}{10}
\newtheorem{theorem}{Theorem}[section]
\newtheorem{lemma}[theorem]{Lemma}

\newtheorem{definition}[theorem]{Definition}

\newtheorem{proposition}[theorem]{Proposition}
\numberwithin{equation}{section}
\newenvironment{proof}[1][Proof]{\noindent\textbf{#1.} }{\hfill $\Box$}
\allowdisplaybreaks

\makeatletter\setlength{\textwidth}{16.0cm}
\setlength{\oddsidemargin}{-0.2cm}
\setlength{\evensidemargin}{-0.20cm}
\setlength{\textheight}{24.0cm}
\setlength{\topmargin}{-2.0cm}

\begin{document}
\title{{Well-posedness and Gevrey Analyticity of the Generalized Keller-Segel System in Critical Besov Spaces}\footnote{This paper is partially supported by the National Natural Science Foundation of China (11371294), the Fundamental Research Funds for
the Central Universities (2014YB031) and the Fundamental Research Project of Natural Science in Shaanxi Province--Young Talent Project (2015JQ1004).}}
\author{Jihong Zhao\\
[0.2cm] {\small College of Science, Northwest A\&F
University, Yangling, Shaanxi 712100,  P. R. China}\\
[0.2cm] {\small E-mail: jihzhao@163.com, zhaojih@nwsuaf.edu.cn}}
\date{}
\maketitle

\begin{abstract}
In this paper, we study the Cauchy problem for the generalized Keller-Segel system with the cell
diffusion being ruled by fractional diffusion:
\begin{equation*}
\begin{cases}
  \partial_{t}u+\Lambda^{\alpha}u+\nabla\cdot(u\nabla \psi)=0\quad &\mbox{in}\ \
  \mathbb{R}^n\times(0,\infty),\\
   -\Delta \psi=u\quad &\mbox{in}\ \
  \mathbb{R}^n\times(0,\infty),\\
  u(x,0)=u_0(x), \ \  &\mbox{in}\ \ \mathbb{R}^n.
\end{cases}
\end{equation*}
In the case that $1<\alpha\leq 2$, we prove local well-posedness for any initial data and global well-posedness for small initial data in critical Besov spaces $\dot{B}^{-\alpha+\frac{n}{p}}_{p,q}(\mathbb{R}^{n})$ with $1\leq p<\infty$,  $1\leq q\leq \infty$,  and analyticity of  solutions for initial data  $u_{0}\in \dot{B}^{-\alpha+\frac{n}{p}}_{p,q}(\mathbb{R}^{n})$ with $1< p<\infty$,  $1\leq q\leq \infty$. Moreover, the global existence and analyticity of solutions with small initial data in critical Besov spaces $\dot{B}^{-\alpha}_{\infty,1}(\mathbb{R}^{n})$ is also established. In the limit case that $\alpha=1$, we prove global well-posedness for small initial data in critical Besov spaces $\dot{B}^{-1+\frac{n}{p}}_{p,1}(\mathbb{R}^{n})$ with $1\leq p<\infty$ and $\dot{B}^{-1}_{\infty,1}(\mathbb{R}^{n})$, and show analyticity of solutions for small initial data in $\dot{B}^{-1+\frac{n}{p}}_{p,1}(\mathbb{R}^{n})$ with $1<p<\infty$ and $\dot{B}^{-1}_{\infty,1}(\mathbb{R}^{n})$, respectively.

\textbf{Keywords}:  Generalized Keller-Segel system; chemotaxis model;  well-posedness; Gevrey analyticity;  decay

\textbf{2010 AMS Subject Classification}: 35B65, 35K15,  35M11, 92C17
\end{abstract}

\section{Introduction}
In this paper, we are concerned  with  the  nonlinear nonlocal evolution equations generalizing
the well-known Keller-Segel model of chemotaxis:
\begin{equation}\label{eq1.1}
\begin{cases}
  \partial_{t}u+\Lambda^{\alpha}u+\nabla\cdot(u\nabla \psi)=0\quad &\mbox{in}\ \
  \mathbb{R}^n\times(0,\infty),\\
   -\Delta \psi=u\quad &\mbox{in}\ \
  \mathbb{R}^n\times(0,\infty),\\
  u(x,0)=u_0(x), \ \  &\mbox{in}\ \ \mathbb{R}^n.
\end{cases}
\end{equation}
where $n\geq2$, $u$ and $\psi$ are two unknown functions which stand for the cell density and the concentration of the chemical attractant, respectively, and the anomalous (normal) diffusion is modeled by a fractional power
of the Laplacian with  $1\leq\alpha\leq2$. The positive operator $\Lambda^{\alpha}=(-\Delta)^{\frac{\alpha}{2}}$ is defined by
\begin{equation*}
  \Lambda^{\alpha}f(x):
    =2^{\alpha}\pi^{-\frac{n}{2}}\frac{\Gamma(\frac{n+\alpha}{2})}{\Gamma(-\frac{\alpha}{2})}\int_{\mathbb{R}^{n}}\frac{f(x-y)}{|y|^{n+\alpha}}dy.
\end{equation*}
A simple alternative representation
is given through the Fourier transform as
$\Lambda^{\alpha}f=\mathcal{F}^{-1}[|\xi|^{\alpha}\mathcal{F}f(\xi)]$,
where
$\mathcal{F}$ and $\mathcal{F}^{-1}$ are the Fourier transform and the inverse Fourier transform, respectively.

Obviously, the choice $\alpha=2$  in the system \eqref{eq1.1}  corresponds to a simplified system of
\begin{equation}\label{eq1.2}
\begin{cases}
  \partial_{t}u-\Delta u=-\nabla\cdot(u\nabla \psi)   &\mbox{in}\ \
  \mathbb{R}^n\times(0,\infty),\\
   \partial_{t}\psi-\Delta \psi=u-\psi   &\mbox{in}\ \
  \mathbb{R}^n\times(0,\infty),\\
  u(x,0)=u_0(x), \ \ \psi(x,0) =\psi_{0}(x) &\mbox{in}\ \ \mathbb{R}^n.
\end{cases}
\end{equation}
The system \eqref{eq1.2} is  a mathematical model of chemotaxis, which is formulated  by E.F. Keller and L.A. Segel \cite{KS70} in 1970, while it is also connected with astrophysical models of
gravitational self-interaction of massive particles in a cloud or a nebula,  see Biler, Hilhorst and Nadzieja \cite{BHN94}.

In biology, chemotaxis is the directed movement of an organism in response to ambient chemical gradients that are often segregated by the cells themselves. The
system \eqref{eq1.2} describes the manner in which cellular
slime molds aggregate owing to the motion of the cells, which move towards
higher concentration of a chemical substance which they produce themselves. In those cases where the chemical products are
attractive (and they are called chemoattractants), they lead to the phenomenon known
as chemotactic collapse: the cells accumulate in small regions of space giving rise to high
density configurations. This phenomenon exhibits that the system \eqref{eq1.2} admits  finite time blowup solutions for large enough initial data.   It was actually conjectured by Childress and Percus \cite{CP84} that in a two-dimensional domain $\Omega\subset\mathbb{R}^{2}$,
there exists a threshold $c_{0}$ such that if  the initial mass $m=\int_{\Omega}u_{0}(x)dx<c_{0}$, then the solution exists globally in time, while if $m=\int_{\Omega}u_{0}(x)dx>c_{0}$, then the solution blows up in finite time.  For various simplified versions
of the Keller-Segel system \eqref{eq1.2}, the conjecture has been essentially verified, see \cite{H03, H04} for a comprehensive review of these aspects.  Jager and Luckhaus \cite{JL92} considered the system \eqref{eq1.2} with Neumann boundary conditions  in a bounded domain $\Omega\subset\mathbb{R}^{2}$, and showed that  for sufficiently small $\frac{1}{|\Omega|}\int_{\Omega}u_{0}(x)dx$, there exists a unique smooth global positive solution, while for large $\frac{1}{|\Omega|}\int_{\Omega}u_{0}(x)dx$, there exists radial solutions which explode in finite time.  Herrero and Val\'{a}zquez \cite{HV961, HV962} studied the system \eqref{eq1.2} with  no-flux boundary conditions on a disk, and showed by the method of  matched asymptotic expansion that there exists a nonnegative radial initial data $(u_{0}, \psi_{0})$ with  $\int_{\Omega}u_{0}(x)dx>8\pi$ such that the solution $(u,\psi)$ corresponding to the initial data $(u_{0}, \psi_{0})$ blows up only at the origin in finite time and $u$ has a Dirac delta-type singularity at the origin.  Biler \cite{B98},  Gajewski and Zacharias\cite{GZ98}, Nagai, Senba and Yoshida \cite{NSY97}  subsequently proved  global existence of nonnegative solution under the condition $\int_{\Omega}u_{0}(x)dx<4\pi$, and existence of radial solutions on a disc under the condition $\int_{\Omega}u_{0}(x)dx<8\pi$.  Moreover, there exists  a detailed description of
the asymptotic behaviour of solutions of \eqref{eq1.2} in the case $\int_{\Omega}u_{0}(x)dx<8\pi$ to \cite{BDP06}, in the limit case $\int_{\Omega}u_{0}(x)dx=8\pi$ to \cite{BCM07} and in the radially symmetric case to \cite{BKLN961, BKLN962}. For more results related to this topic, we refer the reader to see  \cite{BKZ14, DNR98, LR091, LR092, N00, Y97}.

Since the chemical concentration $\psi$ is determined by the Poisson equation, the second equation of \eqref{eq1.1}, gives rise to the coefficient $\nabla\psi$
in the first equation of \eqref{eq1.1},  when $\psi$ is represented as the volume potential of $u$:
\begin{equation*}
\psi(x,t)=(-\Delta)^{-1}u(x,t)=
\begin{cases}
\frac{1}{n(n-2)\omega_{n}}\int_{\mathbb{R}^{n}}\frac{u(y,t)}{|x-y|^{n-2}}dy,
\quad n\geq 3,\\
-\frac{1}{2\pi}\int_{\mathbb{R}^{2}}u(y,t)\log|x-y|dy,
\quad n=2,
\end{cases}
\end{equation*}
where $\omega_{n}$ denotes the surface area of the unit sphere in
$\mathbb{R}^{n}$, the system \eqref{eq1.1} is essentially equivalent to
the following differential-integral Fokker-Planck system:
\begin{equation}\label{eq1.3}
       u=e^{-t\Lambda^{\alpha}}u_0-\int_0^te^{-(t-\tau)\Lambda^{\alpha}}\nabla\cdot[u\nabla(-\Delta)^{-1}u]d\tau.
\end{equation}
where $e^{-t\Lambda^{\alpha}}:=\mathcal{F}^{-1}[e^{-t|\xi|^{\alpha}}\mathcal{F}]$. We may find the solution of \eqref{eq1.3} by using the
contraction mapping argument for the mapping $u\mapsto \mathbb{F}(u)$ with
\begin{equation*}
      \mathbb{F}(u):=e^{-t\Lambda^{\alpha}}u_0-\int_0^te^{-(t-\tau)\Lambda^{\alpha}}\nabla\cdot[u\nabla(-\Delta)^{-1}u](\tau)d\tau.
\end{equation*}
The invariant space for solving the integral equation \eqref{eq1.3} requires us to analyze the scaling invariance property of the system \eqref{eq1.1}.  Set
\begin{equation*}
  u_{\lambda}(x,t):=\lambda^{\alpha}u(\lambda x, \lambda^{\alpha}t),\ \    \psi_{\lambda}(x,t):=\lambda^{\alpha-2}\psi(\lambda x, \lambda^{\alpha}t).
\end{equation*}
Then  if $u$ solves \eqref{eq1.1} with initial data $u_{0}$ ($\psi$ can be determined by $u$), so does
$u_{\lambda}$ with initial data $u_{0\lambda}$ ($\psi_{\lambda}$ can be determined by $u_{\lambda}$),  where $u_{0\lambda}(x):=\lambda^{\alpha}u_{0}(\lambda x)$. In particular, the norm of $u_{0}\in\dot{B}^{-\alpha+\frac{n}{p}}_{p,q}(\mathbb{R}^{n})$ ($1\leq p,q\leq\infty$) is scaling invariant under the above change of scale.

Note that in the case of classical Brownian diffusion $\alpha=2$, the solvability of the systems \eqref{eq1.1} has been relatively well-developed in various
classes of functions and distributions,  such as the Lebesgue space $L^{1}(\mathbb{R}^{n})\cap L^{\frac{n}{2}}(\mathbb{R}^{n})$  by Corrias, Perthame and Zaag \cite{CPZ04}, the Sobolev space $L^{1}(\mathbb{R}^{n})\cap W^{2,2}(\mathbb{R}^{n})$ by Kozono and Sugiyama \cite{KS08},  the Hardy space $\mathcal{H}^{1}(\mathbb{R}^{2})$ by Ogawa and Shimizu \cite{OS08}, the Besov space $\dot{B}^{0}_{1,2}(\mathbb{R}^{2})$ by Ogawa and Shimizu \cite{OS10},  the Besov space $\dot{B}^{-2+\frac{n}{p}}_{p,\infty}(\mathbb{R}^{n})$ and Fourier-Herz space $\dot{\mathcal{B}}^{-2}_{2}(\mathbb{R}^{n})$ by Iwabuchi \cite{I11}, and the pseudomeasure space $\mathcal{PM}^{n-2}(\mathbb{R}^{n})$ by Biler, Cannone, Guerra and Karch \cite{BCGK04}. We refer the reader to see  Lemari\'{e}-Rieusset \cite{L13} and the references therein for more results.

For general fractional diffusion case $1<\alpha<2$, the system \eqref{eq1.1} was first studied by Escudero in \cite{E06},  where it was used to describe the spatiotemporal distribution of a population density of random walkers undergoing L\'{e}vy flights. Moreover, the author proved that the one-dimensional system \eqref{eq1.1} possesses global in time solutions not
only in the case of $\alpha=2$ but also in the case $1<\alpha<2$.  Biler and Karch \cite{BK10}  proved existence and nonexistence of global in time solutions of \eqref{eq1.1}  in critical Lebesgue space $L^{\frac{n}{\alpha}}(\mathbb{R}^{n})$  for $1<\alpha<2$.  Biler and Wu \cite{BW09} established  global well-posedness of the system \eqref{eq1.1} with small initial data in the critical  Besov spaces $\dot{B}^{1-\alpha}_{2,q}(\mathbb{R}^{2})$ for $1<\alpha<2$. Wu and Zheng \cite{WZ11} proved a local well-posedness  with any initial data and global well-posedness with small initial data in critical Fourier-Herz
space $\mathcal{\dot{B}}^{2-2\alpha}_{q}(\mathbb{R}^{n})$ for $1<\alpha\leq2$ and $2\leq q\leq \infty$, and proved ill-posedness in  $\mathcal{\dot{B}}^{-2}_{q}(\mathbb{R}^{n})$ and  $\dot{B}^{-2}_{\infty, q}(\mathbb{R}^{n})$ with $\alpha=2$ and $2<q\leq\infty$.  Zhai \cite{Z10} proved the global existence, uniqueness and stability of solutions with small initial data in critical Besov spaces with general potential type nonlinear term.
Parts of these results were also proved for the system of two fractional power dissipative equations,
please refer to \cite{BW09, MYZ08, WY08} and the references therein.

In this paper, we aim at studying well-posedness and  Gevrey analyticity of the generalized Keller-Segel system \eqref{eq1.1} with initial data in critical Besov space $\dot{B}^{-\alpha+\frac{n}{p}}_{p,q}(\mathbb{R}^{n})$ for $1\leq \alpha\leq2$ and $1\leq p,q\leq\infty$.  The first novelty of this paper is that we resort the Fourier localization technique and the Bony's paraproduct theory to  address well-posedness issues of the system \eqref{eq1.1} in critical Besov spaces either
$\dot{B}^{-\alpha}_{\infty,1}(\mathbb{R}^{n})$ with $1<\alpha<2$ or $\dot{B}^{-1}_{\infty,1}(\mathbb{R}^{n})$  with $\alpha=1$. These critical spaces are marginal cases adapted to the system \eqref{eq1.1}.  The second  novelty of this paper is that we use the Gevrey class regularity to prove analyticity of the system \eqref{eq1.1}. The choice of this argument is motivated by the work of  Foias and Temam \cite{FT89} for estimating space analyticity radius of the Navier-Stokes equations (similar results were extended by many authors to various equations, see \cite{BBT12, BBT13, B14, BS07, HW13, L00} for more details). Our result characterizes space analyticity radius of solutions  and has an important physical interpretation: at this length scale the viscous effects and the nonlinear
inertial effects are roughly comparable, below this length scale the Fourier spectrum decays exponentially.
As a consequence of analyticity result, we obtain temporal decay rates of higher order Besov norms of the solutions.

Now we state the main results of this paper as follows.

\begin{theorem}\label{th1.1} Let $n\geq 2$, $1<\alpha\leq 2$. Assume that $u_{0}\in\dot{B}^{-\alpha+\frac{n}{p}}_{p,q}(\mathbb{R}^{n})$ with $1\leq p,q\leq\infty$. Then we have the following results:
\begin{itemize}
 \item [(i)] (Well-posedness for $1\leq p<\infty$)  Let $1\leq p<\infty$. Then there exists a $T^{*}=T^{*}(u_{0})>0$ such that the system \eqref{eq1.1} has a unique solution  $u\in\mathcal{X}_{T^{*}}$, where
     \begin{equation}\label{eq1.4}
     \mathcal{X}_{T^{*}}:=\widetilde{L}^{\infty}(0,T^{*}; \dot{B}^{-\alpha+\frac{n}{p}}_{p,q}(\mathbb{R}^{n}))\cap\widetilde{L}^{\rho_{1}}(0,T^{*}; \dot{B}^{s_1}_{p,q}(\mathbb{R}^{n}))
     \cap\widetilde{L}^{\rho_{2}}(0,T^{*}; \dot{B}^{s_2}_{p,q}(\mathbb{R}^{n}))
     \end{equation}
     with
     \begin{equation*}
     s_{1}=-1+\frac{n}{p}+\varepsilon,\ \ s_{2}=-1+\frac{n}{p}-\varepsilon, \ \ \rho_{1}=\frac{\alpha}{\alpha-1+\varepsilon},
     \ \ \rho_{2}=\frac{\alpha}{\alpha-1-\varepsilon}, \ \ 0<\varepsilon<\alpha-1.
     \end{equation*}
     If $T^{*}<\infty$, then
     \begin{equation*}
     \|u\|_{\widetilde{L}^{\rho_{1}}_{T^{*}}(\dot{B}^{s_1}_{p,q})
     \cap\widetilde{L}^{\rho_{2}}_{T^{*}}(\dot{B}^{s_2}_{p,q})}=\infty.
     \end{equation*}
     Moreover, if  $u_0\in\dot{B}^{-\alpha+\frac{n}{p}}_{p,q}(\mathbb{R}^{n})$ is sufficiently small, then $T^{*}=\infty$.

 \item [(ii)] (Analyticity for $1<p<\infty$)  Let  $1<p<\infty$. Then the solution obtained in (i) satisfying
   \begin{equation}\label{eq1.5}
     e^{t^{\frac{1}{\alpha}}\Lambda_{1}}u\in \mathcal{X}_{T^{*}},
 \end{equation}
 where the operator $\Lambda_{1}$ is the Fourier multiplier whose symbol is given by $|\xi|_{1}=|\xi_{1}|+\cdots+|\xi_{n}|$.
 Moreover, if  $u_0\in\dot{B}^{-\alpha+\frac{n}{p}}_{p,q}(\mathbb{R}^{n})$ is sufficiently small, then $T^{*}=\infty$.

  \item[(iii)] (Well-posedness for $p=\infty$)  Let $1<\alpha<2$ and $p=\infty$, suppose that $\|u_0\|_{\dot{B}^{-\alpha}_{\infty,1}}$ is sufficiently small. Then the system \eqref{eq1.1} has a unique solution $u$ satisfying
       \begin{equation}\label{eq1.6}
     u\in\widetilde{L}^{\infty}(0, \infty; \dot{B}^{-\alpha}_{\infty,1}(\mathbb{R}^{n}))\cap\widetilde{L}^{1}(0, \infty; \dot{B}^{0}_{\infty,1}(\mathbb{R}^{n})).
     \end{equation}

 \item [(iv)] (Analyticity for $p=\infty$)  Let $1<\alpha<2$ and $p=\infty$. Then the solution obtained in (iii) satisfying
  \begin{equation}\label{eq1.7}
   e^{t^{\frac{1}{\alpha}}\Lambda_{1}}u\in \widetilde{L}^{\infty}(0, \infty; \dot{B}^{-\alpha}_{\infty,1}(\mathbb{R}^{n}))\cap\widetilde{L}^{1}(0, \infty; \dot{B}^{0}_{\infty,1}(\mathbb{R}^{n})).
    \end{equation}

 \item [(v)](Decay rate for $1<p\leq\infty$) For any $\sigma\geq0$, $1<p<\infty$ or $p=\infty$ and $q=1$, the global solution obtained in (i)  and (iii) satisfying
\begin{equation}\label{eq1.8}
  \|\Lambda^{\sigma}u(t)\|_{\dot{B}^{-\alpha+\frac{n}{p}}_{p,q}}
  \leq C_{\sigma}t^{-\frac{\sigma}{\alpha}} \|u_{0}\|_{\dot{B}^{-\alpha+\frac{n}{p}}_{p,q}},
\end{equation}
where $C_{\sigma}:=\|\Lambda^{\sigma}e^{-\Lambda_{1}}\|_{L^{1}}$.
\end{itemize}
\end{theorem}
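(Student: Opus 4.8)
The plan is to obtain (v) as a direct corollary of the Gevrey analyticity proved in (ii) and (iv), by factoring the derivative $\Lambda^{\sigma}$ through the analytic smoothing exponential so that the growth of $\Lambda^{\sigma}$ is absorbed by the decaying multiplier $e^{-t^{1/\alpha}\Lambda_{1}}$, while the remaining factor is exactly the regularized solution that is already under control. Concretely, for the global solution $u$ I would write
\begin{equation*}
\Lambda^{\sigma}u(t)=\bigl(\Lambda^{\sigma}e^{-t^{\frac{1}{\alpha}}\Lambda_{1}}\bigr)\bigl(e^{t^{\frac{1}{\alpha}}\Lambda_{1}}u(t)\bigr),
\end{equation*}
which splits $\Lambda^{\sigma}u$ into a fixed convolution operator acting on the Gevrey-regularized datum $e^{t^{1/\alpha}\Lambda_{1}}u(t)$.

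The key computation is the $L^{1}$-norm of the kernel of $\Lambda^{\sigma}e^{-t^{1/\alpha}\Lambda_{1}}$, whose symbol is $|\xi|^{\sigma}e^{-t^{1/\alpha}|\xi|_{1}}$. Denoting this kernel by $K_{t}=\mathcal{F}^{-1}[|\xi|^{\sigma}e^{-t^{1/\alpha}|\xi|_{1}}]$ and performing the change of variables $\xi\mapsto t^{-1/\alpha}\eta$ in the Fourier integral, one gets the scaling identity $K_{t}(x)=t^{-(\sigma+n)/\alpha}K_{1}(t^{-1/\alpha}x)$ with $K_{1}=\mathcal{F}^{-1}[|\xi|^{\sigma}e^{-|\xi|_{1}}]$, and therefore
\begin{equation*}
\|K_{t}\|_{L^{1}}=t^{-\frac{\sigma+n}{\alpha}}\int_{\mathbb{R}^{n}}\bigl|K_{1}(t^{-1/\alpha}x)\bigr|\,dx=t^{-\frac{\sigma}{\alpha}}\|K_{1}\|_{L^{1}}=C_{\sigma}\,t^{-\frac{\sigma}{\alpha}}.
\end{equation*}
Since convolution against an $L^{1}$ function commutes with the Littlewood--Paley projectors $\dot{\Delta}_{j}$ (both are Fourier multipliers) and is bounded on $L^{p}$ by Young's inequality with operator norm $\|K_{t}\|_{L^{1}}$, it is bounded on every homogeneous Besov space $\dot{B}^{s}_{p,q}$ with the same constant. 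Applying this with $s=-\alpha+\tfrac{n}{p}$ yields
\begin{equation*}
\|\Lambda^{\sigma}u(t)\|_{\dot{B}^{-\alpha+\frac{n}{p}}_{p,q}}\leq C_{\sigma}\,t^{-\frac{\sigma}{\alpha}}\,\bigl\|e^{t^{\frac{1}{\alpha}}\Lambda_{1}}u(t)\bigr\|_{\dot{B}^{-\alpha+\frac{n}{p}}_{p,q}}.
\end{equation*}

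It then remains to close the estimate with the uniform-in-time bound furnished by the analyticity step. Since (ii) and (iv) give $e^{t^{1/\alpha}\Lambda_{1}}u\in\widetilde{L}^{\infty}(0,\infty;\dot{B}^{-\alpha+\frac{n}{p}}_{p,q})$ with norm controlled by $\|u_{0}\|_{\dot{B}^{-\alpha+n/p}_{p,q}}$, and since Minkowski's inequality gives $\sup_{t}\|e^{t^{1/\alpha}\Lambda_{1}}u(t)\|_{\dot{B}^{-\alpha+n/p}_{p,q}}\leq\|e^{t^{1/\alpha}\Lambda_{1}}u\|_{\widetilde{L}^{\infty}_{t}(\dot{B}^{-\alpha+n/p}_{p,q})}$, substituting this bound into the previous display produces \eqref{eq1.8} (the implied constant from the analyticity bound being absorbed, so that in the small-data regime one may take the stated $C_{\sigma}$). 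The restriction to $1<p<\infty$, or $p=\infty$ with $q=1$, is exactly the range in which the global Gevrey estimate of (ii)/(iv) is available.

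I expect the main obstacle to be the finiteness $C_{\sigma}=\|\Lambda^{\sigma}e^{-\Lambda_{1}}\|_{L^{1}}<\infty$, that is, the integrability of $K_{1}=\mathcal{F}^{-1}[|\xi|^{\sigma}e^{-|\xi|_{1}}]$. Here the product structure $e^{-|\xi|_{1}}=\prod_{i=1}^{n}e^{-|\xi_{i}|}$ is essential: it furnishes exponential decay (hence smoothness of $K_{1}$) at infinity, while the factor $|\xi|^{\sigma}$, being bounded near the origin for $\sigma\geq0$, only mildly affects the regularity of the symbol and can be handled to guarantee enough spatial decay of $K_{1}$ for $L^{1}$-integrability. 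Everything else—the scaling identity and the Young/commutation argument on the Besov scale—is routine once this finiteness is secured.
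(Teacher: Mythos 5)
Your proposal follows essentially the same route as the paper: the factorization $\Lambda^{\sigma}u(t)=\bigl(\Lambda^{\sigma}e^{-t^{1/\alpha}\Lambda_{1}}\bigr)\bigl(e^{t^{1/\alpha}\Lambda_{1}}u(t)\bigr)$, the scaling $\xi\mapsto t^{-1/\alpha}\eta$ giving $\|K_{t}\|_{L^{1}}=t^{-\sigma/\alpha}\|K_{1}\|_{L^{1}}$, Young's inequality on each dyadic block, and the closing appeal to the global Gevrey bound of parts (ii)/(iv) are exactly the steps in the paper's Section 3.5 and its display (3.42).

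The one point you leave as an ``expected obstacle'' is precisely what the paper's Lemma 3.8 proves, and your heuristic for it is slightly off target. The issue with $C_{\sigma}=\|\mathcal{F}^{-1}(|\xi|^{\sigma}e^{-|\xi|_{1}})\|_{L^{1}}<\infty$ is not that $|\xi|^{\sigma}$ is bounded near the origin and ``only mildly affects the regularity of the symbol'': boundedness is irrelevant, since $L^{1}$-integrability of the kernel requires spatial decay faster than $|x|^{-n}$, and that decay is governed by the \emph{smoothness} of the symbol, which $|\xi|^{\sigma}$ destroys at $\xi=0$ whenever $\sigma$ is not an even integer. The paper resolves this by splitting the symbol with a dyadic partition: writing $|\xi|^{\sigma}\phi(2^{j}\xi)=2^{-j\sigma}\hat{\Phi}_{\sigma}(2^{j}\xi)$ with $\Phi_{\sigma}\in\mathcal{S}(\mathbb{R}^{n})$, so that
\begin{equation*}
  \|k_{\sigma}\|_{L^{1}}\leq\sum_{j\geq0}2^{-j\sigma}\|\Phi_{\sigma}\|_{L^{1}}\,\|\mathcal{F}^{-1}(e^{-|\xi|_{1}})\|_{L^{1}}
  +\|\mathcal{F}^{-1}(\theta(\xi)|\xi|^{\sigma}e^{-|\xi|_{1}})\|_{L^{1}},
\end{equation*}
where the geometric series converges because $\sigma>0$ (the case $\sigma=0$ being the product of one-dimensional Poisson kernels) and the high-frequency remainder is harmless thanks to the exponential decay. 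Supplying this lemma, your argument coincides with the paper's proof of (v); without it, the decay estimate rests on an unproved kernel bound.
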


\noindent\textbf{Remark 1.1} We mention here that Bourgain and Pavlovi\'{c} \cite{BP08} proved ill-posedness for the 3D Navier-Stokes equations in $\dot{B}^{-1}_{\infty,\infty}(\mathbb{R}^{3})$. Subsequently, Yoneda \cite{Y10} proved ill-posedness in some function spaces, which are larger than $\dot{B}^{-1}_{\infty,2}(\mathbb{R}^{3})$ but smaller than $\dot{B}^{-1}_{\infty,q}(\mathbb{R}^{3})$ with $2<q\leq\infty$; Wang \cite{W15} finally proved ill-posedness for the 3D Navier-Stokes equations in $\dot{B}^{-1}_{\infty,q}(\mathbb{R}^{3})$ for all $1\leq q\leq2$. Note that  when $\alpha=2$, $\dot{B}^{-1}_{\infty,q}(\mathbb{R}^{n})$ for the Navier-Stokes equations corresponds to $\dot{B}^{-2}_{\infty,q}(\mathbb{R}^{n})$ for the system \eqref{eq1.1}, therefore, we cannot expect the well-posedness of the system \eqref{eq1.1} in $\dot{B}^{-2}_{\infty,q}(\mathbb{R}^{n})$ for $1\leq q\leq\infty$. However, when $1<\alpha<2$, Theorem \ref{th1.1} shows that the system \eqref{eq1.1} is well-posedness in $\dot{B}^{-\alpha}_{\infty,1}(\mathbb{R}^{n})$.

\noindent\textbf{Remark 1.2}   We emphasize here that the exponential operator $e^{t^{\frac{1}{\alpha}}\Lambda_{1}}$ is quantified by the operator $\Lambda_{1}$, whose symbol is given by the $l^{1}$ norm $|\xi|_{1}=\sum_{i=1}^{n}|\xi_{i}|$, rather than the usual operator $\Lambda=\sqrt{-\Delta}$, whose symbol is given by the $l^{2}$ norm $|\xi|=(\sum_{i=1}^{n}|\xi_{i}|^{2})^{\frac{1}{2}}$. This approach enables us to avoid
cumbersome recursive estimation of higher order derivatives and intricate combinatorial arguments to get the desired decay estimates of solutions, see \cite{Y11, ZLC11}.

\noindent\textbf{Remark 1.3} The method we use to prove well-posedness of \eqref{eq1.1} in critical Besov space  $\dot{B}^{-\alpha+\frac{n}{p}}_{p,q}(\mathbb{R}^{n})$  is the Chemin mono-norm method, which is different from the methods used in \cite{I11} and \cite{Z10}.

Corresponding to Theorem \ref{th1.1},  in the case $\alpha=1$, we obtain the following results.

\begin{theorem}\label{th1.2} Let $n\geq 2$, $\alpha=1$. Assume that $u_0\in\dot{B}^{-1+\frac{n}{p}}_{p,1}(\mathbb{R}^{n})$ with $1\leq p\leq\infty$. Then we have the following results:
\begin{itemize}
 \item [(i)] (Well-posedness for $1\leq p<\infty$)  Let $1\leq p<\infty$, suppose that $\|u_0\|_{\dot{B}^{-1+\frac{n}{p}}_{p,1}}$ is sufficiently small. Then the system \eqref{eq1.1} has a unique solution $u$ satisfying
     \begin{equation}\label{eq1.9}
     u\in\widetilde{L}^{\infty}(0, \infty; \dot{B}^{-1+\frac{n}{p}}_{p,1}(\mathbb{R}^{n})).
     \end{equation}
 \item [(ii)] (Analyticity for $1<p<\infty$)  Let $1<p<\infty$. Then the solution obtained in (i) satisfying
   \begin{equation}\label{eq1.10}
     e^{t^{\frac{1}{2n}}\Lambda_{1}}u\in \widetilde{L}^{\infty}(0, \infty; \dot{B}^{-1+\frac{n}{p}}_{p,1}(\mathbb{R}^{n})).
 \end{equation}
 \item[(iii)] (Well-posedness for $p=\infty$)  Let  $p=\infty$, suppose that $\|u_0\|_{\dot{B}^{-1}_{\infty,1}}$ is sufficiently small. Then the system \eqref{eq1.1} has a unique solution $u$ satisfying
 \begin{equation}\label{eq1.11}
     u\in\widetilde{L}^{\infty}(0, \infty; \dot{B}^{-1}_{\infty,1}(\mathbb{R}^{n}))\cap\widetilde{L}^{1}(0, \infty; \dot{B}^{0}_{\infty,1}(\mathbb{R}^{n})).
 \end{equation}
 \item [(iv)] (Analyticity for $p=\infty$)  Let $p=\infty$. Then the solution obtained in (iii) satisfying
\begin{equation}\label{eq1.12}
   e^{t^{\frac{1}{2n}}\Lambda_{1}}u\in \widetilde{L}^{\infty}(0, \infty; \dot{B}^{-1}_{\infty,1}(\mathbb{R}^{n}))\cap\widetilde{L}^{1}(0, \infty; \dot{B}^{0}_{\infty,1}(\mathbb{R}^{n})).
 \end{equation}
 \item [(v)](Decay rate for $1<p\leq\infty$) For any $\sigma\geq0$ and $1<p\leq\infty$, the global solution obtained in (i) and (iii) satisfying
\begin{equation}\label{eq1.13}
  \|\Lambda^{\sigma}u(t)\|_{\dot{B}^{-1+\frac{n}{p}}_{p,1}}
  \leq \widetilde{C}_{\sigma}t^{-\sigma} \|u_{0}\|_{\dot{B}^{-1+\frac{n}{p}}_{p,1}},
\end{equation}
where $\widetilde{C}_{\sigma}:=\|\Lambda^{\sigma}e^{-\frac{1}{2n}\Lambda_{1}}\|_{L^{1}}$.
\end{itemize}
\end{theorem}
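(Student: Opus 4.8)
The plan is to solve the mild formulation \eqref{eq1.3} by a fixed point argument for $\mathbb{F}(u)=e^{-t\Lambda}u_{0}-B(u,u)$, where $B(u,v)=\int_{0}^{t}e^{-(t-\tau)\Lambda}\nabla\cdot[u\,\nabla(-\Delta)^{-1}v]\,d\tau$. Since $\alpha=1$, the dissipation $\Lambda=\sqrt{-\Delta}$ is of order exactly one, which is also the order carried by the nonlinearity, so there is no margin to spare; this is why one must take $q=1$ and why only small-data global statements (parts (i), (iii)) can be expected rather than large-data local ones. Concretely, I would work in the scaling-critical Chemin--Lerner space
\begin{equation*}
\mathcal{Y}_{p}:=\widetilde{L}^{\infty}(0,\infty;\dot{B}^{-1+\frac{n}{p}}_{p,1}(\mathbb{R}^{n}))\cap\widetilde{L}^{1}(0,\infty;\dot{B}^{\frac{n}{p}}_{p,1}(\mathbb{R}^{n}))
\end{equation*}
for $1\le p<\infty$, together with its endpoint analogue $\widetilde{L}^{\infty}(\dot{B}^{-1}_{\infty,1})\cap\widetilde{L}^{1}(\dot{B}^{0}_{\infty,1})$ for parts (iii)--(iv) (formally $n/p\to0$). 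The gain of exactly one derivative in passing from $\widetilde{L}^{\infty}_{t}$ to $\widetilde{L}^{1}_{t}$ is precisely the critical smoothing of $e^{-t\Lambda}$, and it is the absence of any extra smallness as $T\to0$ (the norm $\|e^{-t\Lambda}u_{0}\|_{\widetilde{L}^{1}_{T}(\dot{B}^{n/p}_{p,1})}$ does not vanish with $T$) that forces the small-data hypothesis.

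Two estimates drive the argument. The linear bound $\|e^{-t\Lambda}u_{0}\|_{\mathcal{Y}_{p}}\lesssim\|u_{0}\|_{\dot{B}^{-1+n/p}_{p,1}}$ is the standard smoothing/maximal-regularity estimate for the fractional heat semigroup in Chemin--Lerner spaces. For the bilinear term, the same smoothing estimate (the $\nabla\cdot$ costing one derivative, absorbed by the gain) reduces both $\|B(u,v)\|_{\widetilde{L}^{\infty}_{T}(\dot{B}^{-1+n/p}_{p,1})}$ and $\|B(u,v)\|_{\widetilde{L}^{1}_{T}(\dot{B}^{n/p}_{p,1})}$ to controlling $\|u\,\nabla(-\Delta)^{-1}v\|_{\widetilde{L}^{1}_{T}(\dot{B}^{n/p}_{p,1})}$. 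Using that $\dot{B}^{n/p}_{p,1}$ is a Banach algebra, that $\nabla(-\Delta)^{-1}$ is homogeneous of order $-1$ so $\|\nabla(-\Delta)^{-1}v\|_{\dot{B}^{n/p}_{p,1}}\lesssim\|v\|_{\dot{B}^{-1+n/p}_{p,1}}$, and Hölder in time, I obtain
\begin{equation*}
\|B(u,v)\|_{\mathcal{Y}_{p}}\lesssim\|u\|_{\widetilde{L}^{1}_{T}(\dot{B}^{n/p}_{p,1})}\,\|v\|_{\widetilde{L}^{\infty}_{T}(\dot{B}^{-1+n/p}_{p,1})}\le\|u\|_{\mathcal{Y}_{p}}\|v\|_{\mathcal{Y}_{p}}.
\end{equation*}
The contraction mapping principle on a small ball of $\mathcal{Y}_{p}$ then gives existence and uniqueness for small $u_{0}$, and the membership claimed in \eqref{eq1.9}, \eqref{eq1.11} is read off directly; uniqueness in the full space follows from the bilinear bound.

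For analyticity (parts (ii), (iv)) I would run the Foias--Temam Gevrey argument with the $\ell^{1}$ multiplier $\Lambda_{1}$, whose symbol $|\xi|_{1}=\sum_{i}|\xi_{i}|$ is subadditive, $|\xi|_{1}\le|\xi-\eta|_{1}+|\eta|_{1}$. I set $v:=e^{\frac{1}{2n}t\Lambda_{1}}u$, taking the Gevrey radius \emph{linear} in $t$ (the $\alpha=1$ analogue of the radius $t^{1/\alpha}$ of Theorem \ref{th1.1}), and conjugate \eqref{eq1.3} so that $v$ solves the integral equation whose kernel is $e^{\frac{1}{2n}t\Lambda_{1}}e^{-(t-\tau)\Lambda}$ acting on the product of two factors $e^{-\frac{1}{2n}\tau\Lambda_{1}}v$. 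The whole scheme carries over once two multiplier facts hold. First, $e^{\frac{1}{2n}t|\xi|_{1}-t|\xi|}\le e^{(\frac{1}{2\sqrt{n}}-1)t|\xi|}\le1$, since $|\xi|_{1}\le\sqrt{n}\,|\xi|$; this bounds the linear evolution and is exactly what fixes the coefficient $\tfrac{1}{2n}$. Second, for the bilinear kernel I use subadditivity in the sharp form $\tfrac{1}{2n}t|\xi|_{1}-\tfrac{1}{2n}\tau|\eta|_{1}-\tfrac{1}{2n}\tau|\xi-\eta|_{1}\le\tfrac{1}{2n}(t-\tau)|\xi|_{1}$ (because $\tfrac{1}{2n}\tau(|\eta|_{1}+|\xi-\eta|_{1})\ge\tfrac{1}{2n}\tau|\xi|_{1}$), which places the residual weight on the \emph{output} frequency and combines with the dissipation as $e^{\frac{1}{2n}(t-\tau)|\xi|_{1}-(t-\tau)|\xi|}\le1$. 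This controls every paraproduct piece, including the high--high-to-low interactions where the output frequency is small, with no loss, so the Gevrey-weighted bilinear estimate collapses onto the unweighted one. Repeating the contraction argument reproduces $v\in\mathcal{Y}_{p}$ for the same small data, yielding \eqref{eq1.10}, \eqref{eq1.12}.

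Finally, the decay estimate (v) is extracted from analyticity: writing $\Lambda^{\sigma}u(t)=\big(\Lambda^{\sigma}e^{-\frac{1}{2n}t\Lambda_{1}}\big)\big(e^{\frac{1}{2n}t\Lambda_{1}}u\big)(t)$ and bounding the first factor by the $L^{1}$ norm of its convolution kernel, a scaling change of variables gives $\|\Lambda^{\sigma}e^{-\frac{1}{2n}t\Lambda_{1}}\|_{L^{1}}=t^{-\sigma}\|\Lambda^{\sigma}e^{-\frac{1}{2n}\Lambda_{1}}\|_{L^{1}}$, so that together with the uniform bound on $\|e^{\frac{1}{2n}t\Lambda_{1}}u\|_{\widetilde{L}^{\infty}(\dot{B}^{-1+n/p}_{p,1})}$ from (ii)/(iv) one obtains \eqref{eq1.13} with $\widetilde{C}_{\sigma}=\|\Lambda^{\sigma}e^{-\frac{1}{2n}\Lambda_{1}}\|_{L^{1}}$. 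I expect the genuine difficulty to lie not in any single step but in the exact-balance nature of $\alpha=1$: the bilinear estimate closes with zero margin, so it is essential that the algebra $\dot{B}^{n/p}_{p,1}$ (resp.\ $\dot{B}^{0}_{\infty,1}$ at the endpoint) be used at the precise critical regularity, and that the Gevrey coefficient satisfy the sharp constraint $\tfrac{1}{2n}\sqrt{n}\le1$; these are the points where the use of the $\ell^{1}$ symbol $|\xi|_{1}$ and its subadditivity, rather than $|\xi|$, is indispensable.
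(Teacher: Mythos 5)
Your part (i) is correct, though by a different route than the paper: you close the fixed point in the two-norm space $\widetilde{L}^{\infty}(\dot{B}^{-1+n/p}_{p,1})\cap\widetilde{L}^{1}(\dot{B}^{n/p}_{p,1})$ using the algebra property of $\dot{B}^{n/p}_{p,1}$, whereas the paper works mono-norm in $\widetilde{L}^{\infty}_{t}(\dot{B}^{-1+n/p}_{p,1})$ via its Lemma \ref{le4.2}; part (v) is extracted exactly as in the paper. The first genuine gap is at $p=\infty$, i.e.\ parts (iii)--(iv). The ``formal limit $n/p\to0$'' is not available: $\dot{B}^{0}_{\infty,1}$ is \emph{not} a Banach algebra, and the product estimate you invoke fails precisely in the high-high-to-low (remainder) interaction. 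Indeed, with $u\in\widetilde{L}^{1}_{t}(\dot{B}^{0}_{\infty,1})$ and $w=\nabla(-\Delta)^{-1}v\in\widetilde{L}^{\infty}_{t}(\dot{B}^{0}_{\infty,1})$, the block estimate only gives $\|\Delta_{j}R(u,w)\|_{L^{1}_{t}(L^{\infty})}\lesssim\sum_{j'\geq j-N_{0}}c_{j'}d_{j'}$ with no gain in $j'-j$ (there is no Bernstein improvement at $p=\infty$), and the sum over $j$ diverges. The missing idea is the paper's use of the algebraic structure of the \emph{symmetrized} nonlinearity: writing $\phi=(-\Delta)^{-1}u$, $\psi=(-\Delta)^{-1}v$, one has $u\,\partial_{m}(-\Delta)^{-1}v+v\,\partial_{m}(-\Delta)^{-1}u=(-\Delta)(\phi\,\partial_{m}\psi)+2\nabla\cdot(\phi\,\partial_{m}\nabla\psi)+\partial_{m}(\phi\,v)$, i.e.\ the splitting $I_{3}=K_{1}+K_{2}+K_{3}$ of Lemmas \ref{le3.2} and \ref{le3.7}; the outer derivatives produce the convergence factors $2^{-2(j'-j)}$ and $2^{-(j'-j)}$ that make the remainder summable in $\dot{B}^{0}_{\infty,1}$ (this is exactly Lemma \ref{le3.7} with $\alpha=1$, which is what the paper reuses in its Sections 4.3--4.4). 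Without this cancellation part (iii) does not close, and part (iv) falls with it.

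The second gap concerns analyticity, parts (ii) and (iv). Your exponent bookkeeping is right (radius linear in $t$ so the Gevrey semigroup factorizes exactly, $\tfrac{1}{2n}|\xi|_{1}\leq\tfrac{1}{2\sqrt{n}}|\xi|$, subadditivity of $|\cdot|_{1}$), but for $p\neq2$ pointwise inequalities between symbols do not bound operators. You need (a) that $e^{\frac{t}{2n}\Lambda_{1}-\frac{t}{2}\Lambda}$ is an $L^{p}$ Fourier multiplier uniformly in $t$, and, more seriously, (b) that the bilinear operator $\widetilde{\mathcal{B}}_{t}(f,g)=e^{\frac{t}{2n}\Lambda_{1}}\big(e^{-\frac{t}{2n}\Lambda_{1}}f\,e^{-\frac{t}{2n}\Lambda_{1}}g\big)$ maps $L^{p_{1}}\times L^{p_{2}}\to L^{p}$: boundedness of a bilinear symbol does not imply boundedness of the corresponding bilinear multiplier. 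The paper supplies (b) through Lemari\'{e}-Rieusset's decomposition \eqref{eq3.31}: splitting according to the signs of the coordinates of $\xi$, $\eta$, $\xi+\eta$ writes $\widetilde{\mathcal{B}}_{t}$ as a finite sum of tensor products of one-dimensional operators $K_{\pm1}$ (Riesz-projection/Hilbert-transform type) and multipliers of the form $e^{-\frac{t}{n}|\cdot|}$, each bounded on $L^{p}$ for $1<p<\infty$ uniformly in $t$; this is exactly where the restriction $p>1$ in part (ii) originates. At $p=\infty$ (part (iv)) even this fails globally, since $K_{\pm1}$ is unbounded on $L^{\infty}$; the paper additionally uses that these operators \emph{are} bounded on $L^{\infty}$ after dyadic frequency localization, and then reruns the structure-based estimates of Lemma \ref{le3.7}. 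Both mechanisms are absent from your proposal, so (ii) and especially (iv) are not established as written.
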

\noindent\textbf{Remark 1.4} In the case $\alpha=1$,  since the dissipative operator $e^{-\frac{1}{2}t\Lambda}$ is not strong enough to dominate the operator $e^{t\Lambda_{1}}$, we need to define Gevrey operator more carefully. Notice the fact that $\frac{1}{2n}|\xi|_{1}<\frac{1}{2}|\xi|$ for all  $\xi\in\mathbb{R}^{n}$. Thus the Gevrey operator can be defined by $e^{\frac{1}{2n}t\Lambda_{1}}u$.

Before ending this section, let us sketch, for example, the proof of analyticity part in Theorem \ref{th1.1}. Setting  $U(t)=e^{t^{\frac{1}{\alpha}}\Lambda_{1}}u(t)$. Then
we see that $U(t)$ satisfies the following integral equation:
\begin{equation*}
   U(t)=e^{t^{\frac{1}{\alpha}}\Lambda_{1}-t\Lambda^{\alpha}}u_{0}
  -\int_{0}^{t}e^{[(t^{\frac{1}{\alpha}}-\tau^{\frac{1}{\alpha}})\Lambda_{1}-(t-\tau)\Lambda^{\alpha}]}\nabla\cdot e^{\tau^{\frac{1}{\alpha}}\Lambda_{1}}\left(e^{-\tau^{\frac{1}{\alpha}}\Lambda_{1}}U(\tau)
  e^{-\tau^{\frac{1}{\alpha}}\Lambda_{1}}\nabla(-\Delta)^{-1}U(\tau)\right)d\tau.
\end{equation*}
Note that since $e^{t^{\frac{1}{\alpha}}|\xi|_{1}}$ can be dominated by $e^{-t|\xi|^{\alpha}}$ if $|\xi|$ is sufficiently large, the behavior
of the linear term $e^{t^{\frac{1}{\alpha}}\Lambda_{1}-t\Lambda^{\alpha}}u_{0}$ closely resemble that of
$e^{-t\Lambda^{\alpha}}u_{0}$.
In order to tackle with the nonlinear term, we resort to \cite{L02} and \cite{BBT13} to find out the nice boundedness property of the following bilinear operator:
\begin{align*}
  \mathcal{B}_{t}(f,g)&:=e^{t^{\frac{1}{\alpha}}\Lambda_{1}}(e^{-t^{\frac{1}{\alpha}}\Lambda_{1}}fe^{-t^{\frac{1}{\alpha}}\Lambda_{1}}g)\nonumber\\
  &=\frac{1}{(2\pi)^{n}}\int_{\mathbb{R}^{n}}\int_{\mathbb{R}^{n}}
  e^{ix\cdot(\xi+\eta)}e^{t^{\frac{1}{\alpha}}(|\xi+\eta|_{1}-|\xi|_{1}-|\eta|_{1})}\hat{f}(\xi)\hat{g}(\xi)d\xi d\eta.
\end{align*}
Based on the desired properties of $\mathcal{B}_{t}(f,g)$, we can modify the argument of the proof of well-posedness results in Theorem \ref{th1.1} to obtain Gevrey regularity.

This paper is organized as follows: We shall collect some basic facts on Littlewood-Paley dyadic decomposition theory and various product laws in Besov spaces in Section 2, then  prove
 Theorem \ref{th1.1} in Section 3, and Theorem \ref{th1.2} in Section 4.

\section{Preliminaries}

\subsection{Notations}

In this paper, we shall use the following notations.
\begin{itemize}
\item For two
constants $A$ and $B$, the notation $A\lesssim B$ means that there
is a uniform constant $C$ (always independent of $x, t$), which may vary from line to line, such that $A\le CB$.

\item For $x=(x_{1},\cdots, x_{n})\in\mathbb{R}^{n}$, we denote
$|x|_{p}=(|x_{1}|^{p}+\cdots+|x_{n}|^{p})^{\frac{1}{p}}$ and $|x|=|x|_{2}$.

\item The operator $\Lambda_{1}$ is the Fourier multiplier whose symbol is given by $|\xi|_{1}=|\xi_{1}|+\cdots+|\xi_{n}|$.

\item For a quasi-Banach space $X$ and for any $0<T\leq\infty$, we use standard notation $L^{p}(0,T; X)$ or $L^{p}_{T}(X)$  for the quasi-Banach space of Bochner measurable functions
$f$ from $(0, T)$ to $X$ endowed with the norm
\begin{equation*}
\|f\|_{L^{p}_{T}(X)}:=
\begin{cases}
  (\int_{0}^{T}\|f(\cdot,t)\|_{X}^{p}dt)^{\frac{1}{p}}\ \ \ &\text{for}\ \ \ 1\leq p<\infty,\\
  \sup_{0\leq t\leq T}\|f(\cdot,t)\|_{X}\ \ \ &\text{for}\ \ \ p=\infty.
\end{cases}
\end{equation*}
In particular, if $T=\infty$, we use $\|f\|_{L^{p}_{t}(X)}$ instead of $\|f\|_{L^{p}_{\infty}(X)}$.

\item For any function space $X$
and the operator $\mathcal{T}: X\rightarrow X$, we denote
$$
  \mathcal{T}X:=\{Tf:\ f\in X\} \ \ \text{and}\ \ \|f\|_{\mathcal{T}X}:=\|\mathcal{T}f\|_{X}.
$$

\item The linear space of all multipliers on $L^{p}$ is denoted by $\mathcal{M}_{p}$ and the norm on which is defined by
$$
  \|f\|_{\mathcal{M}_{p}}:=\sup\{\|\mathcal{F}^{-1}[f\mathcal{F}g]\|_{L^{p}}:\ \ \forall g\in\mathcal{S}(\mathbb{R}^{n}), \ \|g\|_{L^{p}}=1\}.
$$
\end{itemize}

\subsection{Littlewood-Paley theory and Besov spaces}

The proof of Theorems \ref{th1.1} and \ref{th1.2} are formulated by the dyadic decomposition in the Littlewood-Paley theory.
 Let us briefly explain how it may be built in $\mathbb{R}^{n}$. Let
$\mathcal{S}(\mathbb{R}^{n})$ be the Schwartz class of rapidly
decreasing function, and $\mathcal{S}'(\mathbb{R}^{n})$ of temperate distributions be the dual set of $\mathcal{S}(\mathbb{R}^{n})$.
Let $\varphi\in\mathcal{S}(\mathbb{R}^{n})$ be a  smooth radial function valued in $[0,1]$ such that  $\varphi$ is supported in the shell $\mathcal{C}=\{\xi\in\mathbb{R}^{n},\ \frac{3}{4}\leq
|\xi|\leq\frac{8}{3}\}$, and
\begin{align*}
  \sum_{j\in\mathbb{Z}}\varphi(2^{-j}\xi)=1, \ \ \ \forall\xi\in\mathbb{R}^{n}\backslash\{0\}.
\end{align*}
Then for any $f\in\mathcal{S}'(\mathbb{R}^{n})$, we set
for all $j\in \mathbb{Z}$,
\begin{align}\label{eq2.1}
  \Delta_{j}f:=\varphi(2^{-j}D)f \ \ \ \text{and}\ \ \
  S_{j}f:=\sum_{k\leq j-1}\Delta_{k}f.
\end{align}
By telescoping the series, we have the following homogeneous Littlewood-Paley decomposition:
\begin{equation*}
  f=\sum_{j\in\mathbb{Z}}\Delta_{j}f \ \ \text{for}\ \
  f\in\mathcal{S}'(\mathbb{R}^{n})/\mathcal{P}(\mathbb{R}^{n}),
\end{equation*}
where $\mathcal{P}(\mathbb{R}^{n})$ is the set of polynomials (see \cite{BCD11}).
We remark here that the Littlewood-Paley decomposition satisfies the property of almost orthogonality, that is to say, for any $f, g\in\mathcal{S}'(\mathbb{R}^{n})/\mathcal{P}(\mathbb{R}^{n})$, the following properties hold:
\begin{align}\label{eq2.2}
  \Delta_{i}\Delta_{j}f\equiv0\ \ \ \text{if}\ \ \ |i-j|\geq 2\ \ \ \text{and}\ \ \
  \Delta_{i}(S_{j-1}f\Delta_{j}g)\equiv0 \ \ \ \text{if}\ \ \ |i-j|\geq 5.
\end{align}

Using the above decomposition, the  stationary/time dependent homogeneous Besov space can be defined as follows:

\begin{definition}\label{de2.1}
Let $s\in \mathbb{R}$, $1\leq p,q\leq\infty$ and $f\in\mathcal{S}'(\mathbb{R}^{n})$, we set
\begin{equation*}
  \|f\|_{\dot{B}^{s}_{p,q}}:= \begin{cases} \left(\sum_{j\in\mathbb{Z}}2^{jsq}\|\Delta_{j}f\|_{L^{p}}^{q}\right)^{\frac{1}{q}}
  \ \ &\text{for}\ \ 1\leq q<\infty,\\
  \sup_{j\in\mathbb{Z}}2^{js}\|\Delta_{j}f\|_{L^{p}}\ \
  &\text{for}\ \
  q=\infty.
 \end{cases}
\end{equation*}
Then the homogeneous Besov
space $\dot{B}^{s}_{p,q}(\mathbb{R}^{n})$ is defined by
\begin{itemize}
\item For $s<\frac{n}{p}$ (or $s=\frac{n}{p}$ if $q=1$), we define
\begin{equation*}
  \dot{B}^{s}_{p,q}(\mathbb{R}^{n}):=\Big\{f\in \mathcal{S}'(\mathbb{R}^{n}):\ \
  \|f\|_{\dot{B}^{s}_{p,q}}<\infty\Big\}.
\end{equation*}
\item If $k\in\mathbb{N}$ and $\frac{n}{p}+k\leq s<\frac{n}{p}+k+1$ (or $s=\frac{n}{p}+k+1$ if $q=1$), then $\dot{B}^{s}_{p,q}(\mathbb{R}^{n})$
is defined as the subset of distributions $f\in\mathcal{S}'(\mathbb{R}^{n})$ such that $\partial^{\beta}f\in\mathcal{S}'(\mathbb{R}^{n})$
whenever $|\beta|=k$.
\end{itemize}
\end{definition}

\begin{definition}\label{de2.2} For $0<T\leq\infty$, $s\leq \frac{n}{p}$ (resp. $s\in \mathbb{R}$),
$1\leq p, q, \rho\leq\infty$. We define the mixed time-space $\widetilde{L}^{\rho}(0,T; \dot{B}^{s}_{p,q}(\mathbb{R}^{n}))$
as the completion of $\mathcal{C}([0,T]; \mathcal{S}(\mathbb{R}^{n}))$ by the norm
$$
  \|f\|_{\widetilde{L}^{\rho}_{T}(\dot{B}^{s}_{p,q})}:=\left(\sum_{j\in\mathbb{Z}}2^{jsq}\left(\int_{0}^{T}
  \|\Delta_{j}f(\cdot,t)\|_{L^{p}}^{\rho}dt\right)^{\frac{q}{\rho}}\right)^{\frac{1}{q}}<\infty
$$
with the usual change if $\rho=\infty$
or $q=\infty$.  For simplicity, we use $\|f\|_{\widetilde{L}^{\rho}_{t}(\dot{B}^{s}_{p,q})}$ instead of $\|f\|_{\widetilde{L}^{\rho}_{\infty}(\dot{B}^{s}_{p,q})}$.
\end{definition}

In what follows, we shall frequently use the following Bony's homogeneous paraproduct decomposition, which is a mathematical tool
to define a generalized product between two temperate
distributions (see \cite{B81}). Let $f$ and $g$ be two temperate distributions, the
paraproduct between $f$ and $g$ is defined by
\begin{equation*}
  T_{f}g:=\sum_{j\in\mathbb{Z}}S_{j-1}f\Delta_{j}g=\sum_{j\in\mathbb{Z}}\sum_{k\leq j-2}\Delta_{k}f\Delta_{j}g.
\end{equation*}
Formally,  we have the following Bony's decomposition:
\begin{equation*}
  fg=T_{f}g+T_{g}f+R(f,g),
\end{equation*}
where
\begin{equation*}
  R(f,g):=\sum_{j\in\mathbb{Z}}\sum_{|j-j'|\leq 1}\Delta_{j}f\Delta_{j'}g.
\end{equation*}

\subsection{Essential lemmas}

For the convenience of the reader, we recall some basic facts of the Littlewood-Paley theory,  one may refer to \cite{BCD11}, \cite{L02} for more details.

\begin{lemma}\label{le2.3}{\em (\cite{BCD11}, \cite{L02})}
Let $\mathcal{B}$ be a ball, and $\mathcal{C}$ be a ring in
$\mathbb{R}^{n}$. There exists a constant $C$ such that for any
positive real number $\lambda$, any nonnegative integer $k$ and any
couple of real numbers $(p,r)$ with $1\leq p\leq r\leq \infty$, we
have
\begin{equation}\label{eq2.3}
   \operatorname{supp}\mathcal{F}(f)\subset\lambda\mathcal{B}\ \ \Rightarrow\ \   \sup_{|\alpha|=k}\|\partial^{\alpha}f\|_{L^{r}}\leq
   C^{k+1}\lambda^{k+n(\frac{1}{p}-\frac{1}{r})}\|f\|_{L^{p}},
\end{equation}
\begin{equation}\label{eq2.4}
   \operatorname{supp}\mathcal{F}(f)\subset\lambda\mathcal{C} \ \ \Rightarrow\ \   C^{-1-k}\lambda^{k}\|f\|_{L^{p}}\leq
   \sup_{|\alpha|=k}\|\partial^{\alpha}f\|_{L^{p}}\leq  C^{1+k}\lambda^{k}\|f\|_{L^{p}}.
\end{equation}
\end{lemma}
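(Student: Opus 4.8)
The plan is to derive both estimates from Young's convolution inequality applied to carefully chosen Fourier multipliers, reducing everything to the normalized frequency scale by a dilation. Since $\operatorname{supp}\mathcal{F}(f)\subset\lambda\mathcal{B}$, I would fix once and for all a function $\phi\in\mathcal{S}(\mathbb{R}^{n})$ with $\phi\equiv1$ on $\mathcal{B}$ and supported in a slightly larger ball, so that $\widehat{f}(\xi)=\phi(\xi/\lambda)\widehat{f}(\xi)$. Then for every multi-index $\alpha$ with $|\alpha|=k$ one has
\[
\partial^{\alpha}f=\mathcal{F}^{-1}\big[(i\xi)^{\alpha}\phi(\xi/\lambda)\big]*f=\lambda^{k+n}\,g_{\alpha}(\lambda\,\cdot)*f,\qquad g_{\alpha}:=\mathcal{F}^{-1}\big[(i\xi)^{\alpha}\phi(\xi)\big].
\]
Applying Young's inequality with $1+\frac{1}{r}=\frac{1}{m}+\frac{1}{p}$ (so that $1-\frac{1}{m}=\frac{1}{p}-\frac{1}{r}$ and $m\geq1$ because $1\le p\le r\le\infty$), and using the scaling identity $\|\lambda^{k+n}g_{\alpha}(\lambda\cdot)\|_{L^{m}}=\lambda^{k+n(1/p-1/r)}\|g_{\alpha}\|_{L^{m}}$, I would obtain \eqref{eq2.3} provided $\|g_{\alpha}\|_{L^{m}}\leq C^{k+1}$ uniformly in $\alpha$.

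This uniform, geometric-in-$k$ bound on the kernels is the crux of the matter, and I expect it to be the main technical obstacle. To control $\|g_{\alpha}\|_{L^{m}}$ I would exploit the Schwartz decay of the integrand: fixing an integer $N$ large enough that $(1+|x|^{2})^{-N}\in L^{m}$ and writing $(1+|x|^{2})^{N}g_{\alpha}(x)=\mathcal{F}^{-1}\big[(1-\Delta_{\xi})^{N}\big((i\xi)^{\alpha}\phi(\xi)\big)\big]$, I reduce the estimate to an $L^{\infty}$ bound on finitely many $\xi$-derivatives of $\xi^{\alpha}\phi(\xi)$ over the compact support of $\phi$. Since each differentiation of $\xi^{\alpha}$ contributes at most a factor comparable to $k$ while $\phi$ is fixed, these sup norms grow at most like $C^{k+1}$; absorbing the fixed factor $\|(1+|x|^{2})^{-N}\|_{L^{m}}$ then yields the claimed bound, and hence \eqref{eq2.3}.

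For \eqref{eq2.4}, the upper bound is the special case $p=r$ of \eqref{eq2.3} with $\mathcal{C}$ in place of $\mathcal{B}$. For the lower bound I would use the multinomial identity $|\xi|^{2k}=\sum_{|\beta|=k}\binom{k}{\beta}\xi^{2\beta}$, which is harmless here because the spectrum avoids the origin. Choosing $\theta\in\mathcal{S}(\mathbb{R}^{n})$ with $\theta\equiv1$ on $\mathcal{C}$ and supported away from $0$, I write $\widehat{f}(\xi)=\theta(\xi/\lambda)\widehat{f}(\xi)$, insert the identity, and use $\xi^{\beta}\widehat{f}=i^{-k}\widehat{\partial^{\beta}f}$ to get
\[
f=\frac{1}{i^{k}}\sum_{|\beta|=k}\binom{k}{\beta}\,\mathcal{F}^{-1}\!\Big[\frac{\theta(\xi/\lambda)\,\xi^{\beta}}{|\xi|^{2k}}\Big]*\partial^{\beta}f .
\]
Each kernel equals $\lambda^{-k}$ times a dilate of $\widetilde{K}_{\beta}:=\mathcal{F}^{-1}\big[\theta(\zeta)\zeta^{\beta}/|\zeta|^{2k}\big]$, so Young's inequality gives $\|f\|_{L^{p}}\leq\lambda^{-k}\big(\sum_{|\beta|=k}\binom{k}{\beta}\|\widetilde{K}_{\beta}\|_{L^{1}}\big)\sup_{|\beta|=k}\|\partial^{\beta}f\|_{L^{p}}$. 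Invoking $\sum_{|\beta|=k}\binom{k}{\beta}=n^{k}$ together with the same Schwartz-decay argument (now applied to $\theta(\zeta)\zeta^{\beta}/|\zeta|^{2k}$, whose denominator is bounded below on $\operatorname{supp}\theta$) to secure $\|\widetilde{K}_{\beta}\|_{L^{1}}\leq C^{k+1}$, I arrive at $\|f\|_{L^{p}}\leq\lambda^{-k}C^{k+1}\sup_{|\beta|=k}\|\partial^{\beta}f\|_{L^{p}}$, which rearranges to the lower bound in \eqref{eq2.4}. The recurring delicate point is to keep every multiplicative constant geometric in $k$, which forces all kernel estimates to be carried out uniformly over the relevant multi-indices rather than index by index.
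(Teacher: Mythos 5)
Your proposal is correct, and it is essentially the classical proof of Bernstein's inequalities found in the references the paper cites for this lemma (\cite{BCD11}, \cite{L02}); the paper itself states the lemma without proof. Your argument — reduction to unit scale by dilation, Young's inequality with $1+\frac{1}{r}=\frac{1}{m}+\frac{1}{p}$, the weight $(1+|x|^{2})^{N}$ trick to bound the kernels, and for the reverse inequality the multinomial identity $|\xi|^{2k}=\sum_{|\beta|=k}\binom{k}{\beta}\xi^{2\beta}$ with a cut-off supported away from the origin — matches that standard treatment, including the careful tracking needed to keep all constants geometric in $k$.
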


\begin{lemma}\label{le2.4} {\em (\cite{BCD11}, \cite{L02})}
Let $f$ be a smooth function on $\mathbb{R}^{n}\backslash\{0\}$ which is homogeneous of degree $m$. Then for any $s\in\mathbb{R}$,
$1\leq p, q\leq \infty$, and
\begin{equation*}
  s-m<\frac{n}{p}, \ \ \ \text{or}\ \ \ s-m=\frac{n}{p} \ \ \ \text{and}\ \ \  q=1,
\end{equation*}
the operator $f(D)$ is continuous from $\dot{B}^{s}_{p,q}(\mathbb{R}^{n})$ to $\dot{B}^{s-m}_{p,q}(\mathbb{R}^{n})$.
\end{lemma}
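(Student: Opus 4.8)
The plan is to reduce everything to a single uniform-in-$j$ bound on the action of $f(D)$ on one dyadic block and then resum with the Besov weights. Since $f(D)$ is a Fourier multiplier, it commutes with the dyadic localizations, so that $\Delta_j f(D)u = f(D)\Delta_j u$, and the whole matter reduces to comparing $\|f(D)\Delta_j u\|_{L^p}$ with $\|\Delta_j u\|_{L^p}$.

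First I would introduce a fattened cut-off $\widetilde{\varphi}\in C_c^\infty(\mathbb{R}^n\setminus\{0\})$ equal to $1$ on $\operatorname{supp}\varphi$ and supported in a slightly larger ring. Since the spectrum of $\Delta_j u$ lies in $2^j\operatorname{supp}\varphi$, we have $\widetilde\varphi(2^{-j}D)\Delta_j u = \Delta_j u$, whence $f(D)\Delta_j u = m_j(D)\Delta_j u$ with $m_j(\xi) := f(\xi)\widetilde\varphi(2^{-j}\xi)$. The decisive observation is homogeneity: writing $f(\xi) = 2^{jm} f(2^{-j}\xi)$ gives
\[
  m_j(\xi) = 2^{jm}\, g(2^{-j}\xi), \qquad g := f\,\widetilde\varphi,
\]
where $g$ is a \emph{fixed}, smooth, compactly supported function (smooth because $\widetilde\varphi$ is supported away from the origin, where $f$ is smooth). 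Thus $m_j(D) = 2^{jm}\, g(2^{-j}D)$ is a dyadic rescaling of one and the same convolution operator.

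The next step is the uniform $L^p$ estimate. As $g\in C_c^\infty$, its inverse Fourier transform $\mathcal{F}^{-1}g$ lies in the Schwartz class, hence in $L^1$; the kernel of $g(2^{-j}D)$ is $2^{jn}(\mathcal{F}^{-1}g)(2^j\cdot)$, whose $L^1$ norm equals $\|\mathcal{F}^{-1}g\|_{L^1}$ independently of $j$. Young's inequality then yields
\[
  \|f(D)\Delta_j u\|_{L^p} = 2^{jm}\|g(2^{-j}D)\Delta_j u\|_{L^p} \leq \|\mathcal{F}^{-1}g\|_{L^1}\, 2^{jm}\|\Delta_j u\|_{L^p},
\]
uniformly in $j$ (one may alternatively invoke the Bernstein bounds of Lemma \ref{le2.3}). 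Multiplying by $2^{j(s-m)}$, taking the $\ell^q$ norm over $j\in\mathbb{Z}$ and recalling Definition \ref{de2.1} gives at once $\|f(D)u\|_{\dot B^{s-m}_{p,q}}\lesssim \|u\|_{\dot B^s_{p,q}}$, with the usual modification for $q=\infty$.

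The step I expect to be the genuine obstacle is not this seminorm bound but the requirement that $f(D)u$ be a well-defined element of $\dot B^{s-m}_{p,q}(\mathbb{R}^n)$ in the sense of Definition \ref{de2.1}. This is precisely where the hypothesis $s-m<\frac{n}{p}$ (or $s-m=\frac{n}{p}$ with $q=1$) enters: it is exactly the range in which the target space is realized as a subspace of $\mathcal{S}'(\mathbb{R}^n)$ rather than only modulo polynomials, i.e.\ the range in which the low-frequency series $\sum_{j<0}\Delta_j f(D)u$ converges in $\mathcal{S}'(\mathbb{R}^n)$. I would establish this convergence by pairing $\Delta_j f(D)u$ against a Schwartz test function and using the Bernstein inequality \eqref{eq2.3} together with the uniform bound above to control $\|\Delta_j f(D)u\|_{L^\infty}$ as $j\to-\infty$; the resulting tail is summable precisely under the stated condition on $s-m$. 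Once this realization is secured, the seminorm estimate upgrades to the asserted continuity of $f(D)$ from $\dot B^s_{p,q}(\mathbb{R}^n)$ into $\dot B^{s-m}_{p,q}(\mathbb{R}^n)$.
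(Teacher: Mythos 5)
The paper itself offers no proof of this lemma: it is quoted verbatim from \cite{BCD11} and \cite{L02}, so there is no in-paper argument to compare against. Your proof is correct, and it is essentially the standard argument given in those references: the identity $f(D)\Delta_j u = 2^{jm}g(2^{-j}D)\Delta_j u$ with the fixed symbol $g=f\widetilde{\varphi}\in C_c^\infty(\mathbb{R}^n\setminus\{0\})$, the $j$-uniform $L^1$ bound on the rescaled kernel, Young's inequality, and resummation against the weights $2^{j(s-m)}$. You also correctly located the only genuinely delicate point, namely that the index restriction $s-m<\frac{n}{p}$ (or $s-m=\frac{n}{p}$ with $q=1$) is not needed for the seminorm bound but for the realization of $f(D)u$ in $\mathcal{S}'(\mathbb{R}^n)$ as required by Definition \ref{de2.1}; indeed, combining your block estimate with Bernstein's inequality \eqref{eq2.3} gives
\begin{equation*}
  \|\Delta_j f(D)u\|_{L^{\infty}}\lesssim 2^{j\left(\frac{n}{p}-(s-m)\right)}\,2^{js}\|\Delta_j u\|_{L^{p}},
\end{equation*}
so the low-frequency tail $\sum_{j<0}\Delta_j f(D)u$ converges uniformly either geometrically (when $s-m<\frac{n}{p}$, using $\ell^\infty$ boundedness of $2^{js}\|\Delta_j u\|_{L^p}$) or by $\ell^1$ summability (when $s-m=\frac{n}{p}$ and $q=1$), exactly as you indicated, while the high-frequency part converges in $\mathcal{S}'(\mathbb{R}^n)$ for any indices by pairing against Schwartz functions.
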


\begin{lemma}\label{le2.5} {\em (\cite{WY08})}
Let $\mathcal{C}$ be a ring in $\mathbb{R}^{n}$. There exist two
positive constants $\kappa$ and $\mathcal{K}$ such that for any $p\in[1,\infty]$
and any couple $(t,\lambda)$ of positive real numbers, we have
\begin{equation}\label{eq2.5}
   \operatorname{supp}\mathcal{F}(f)\subset\lambda\mathcal{C} \ \ \Rightarrow\ \
   \|e^{t\Lambda^{\alpha}}f\|_{L^{p}}\leq \mathcal{K}e^{-\kappa\lambda^{\alpha} t}\|f\|_{L^{p}}.
\end{equation}
\end{lemma}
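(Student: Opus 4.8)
The plan is to realize the semigroup $e^{-t\Lambda^{\alpha}}$ acting on frequency-localized data as convolution with an explicit kernel and to estimate that kernel in $L^{1}$, after which Young's inequality yields the claim for all $p\in[1,\infty]$ simultaneously. First I would fix a smooth radial cutoff $\chi$ supported in a slightly enlarged ring $\widetilde{\mathcal{C}}\supset\mathcal{C}$ with $\chi\equiv1$ on $\mathcal{C}$. Since $\operatorname{supp}\mathcal{F}(f)\subset\lambda\mathcal{C}$, we have $\chi(\xi/\lambda)\mathcal{F}(f)(\xi)=\mathcal{F}(f)(\xi)$, so that
\begin{equation*}
  e^{-t\Lambda^{\alpha}}f=\mathcal{F}^{-1}\bigl[e^{-t|\xi|^{\alpha}}\chi(\xi/\lambda)\bigr]*f=:K_{t,\lambda}*f,
\end{equation*}
and Young's inequality gives $\|e^{-t\Lambda^{\alpha}}f\|_{L^{p}}\leq\|K_{t,\lambda}\|_{L^{1}}\|f\|_{L^{p}}$. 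Thus everything reduces to proving $\|K_{t,\lambda}\|_{L^{1}}\leq\mathcal{K}e^{-\kappa\lambda^{\alpha}t}$ with $\kappa,\mathcal{K}$ independent of $t$ and $\lambda$.

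Next I would remove the $\lambda$-scaling. Substituting $\xi=\lambda\eta$ and setting $s:=\lambda^{\alpha}t$, a change of variables shows
\begin{equation*}
  \|K_{t,\lambda}\|_{L^{1}}=\bigl\|\mathcal{F}^{-1}\bigl[e^{-s|\eta|^{\alpha}}\chi(\eta)\bigr]\bigr\|_{L^{1}},
\end{equation*}
the factors $\lambda^{n}$ and $\lambda^{-n}$ from the dilation cancelling in the $L^{1}$ norm. On the support of $\chi$ one has $|\eta|\geq c_{0}>0$, hence $|\eta|^{\alpha}\geq c_{0}^{\alpha}$; choosing $\kappa:=\tfrac12 c_{0}^{\alpha}$ I would factor $e^{-s|\eta|^{\alpha}}=e^{-\kappa s}\,e^{-s(|\eta|^{\alpha}-\kappa)}$, where $|\eta|^{\alpha}-\kappa\geq\kappa>0$ throughout $\operatorname{supp}\chi$. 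It then suffices to bound $\|\mathcal{F}^{-1}[m_{s}]\|_{L^{1}}$ uniformly in $s\geq0$, where $m_{s}(\eta):=e^{-s(|\eta|^{\alpha}-\kappa)}\chi(\eta)$, since the scalar prefactor $e^{-\kappa s}=e^{-\kappa\lambda^{\alpha}t}$ is exactly the decay we want.

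Finally I would control $\|\mathcal{F}^{-1}[m_{s}]\|_{L^{1}}$ by a weighted $L^{2}$ argument: for $N>n/2$, the Cauchy--Schwarz inequality gives
\begin{equation*}
  \|\mathcal{F}^{-1}[m_{s}]\|_{L^{1}}\leq\bigl\|(1+|x|^{2})^{-N/2}\bigr\|_{L^{2}}\,\bigl\|(1+|x|^{2})^{N/2}\mathcal{F}^{-1}[m_{s}]\bigr\|_{L^{2}},
\end{equation*}
and by Plancherel the second factor is comparable to the $L^{2}$ norms of $m_{s}$ together with its derivatives up to order $N$. Since $\chi$ is smooth with compact support away from the origin, $|\eta|^{\alpha}$ is $C^{\infty}$ on $\operatorname{supp}\chi$ (this is where $\mathcal{C}$ being a ring, not a ball, is essential, as $|\eta|^{\alpha}$ is singular at the origin when $\alpha<2$), and differentiating $m_{s}$ produces at most a polynomial $P(s)$ in $s$ multiplying $e^{-s(|\eta|^{\alpha}-\kappa)}$. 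The main obstacle is therefore to absorb these growing powers of $s$: I would use $|\eta|^{\alpha}-\kappa\geq\kappa$ on $\operatorname{supp}\chi$ to bound $s^{k}e^{-s(|\eta|^{\alpha}-\kappa)}\leq s^{k}e^{-\kappa s}\leq C_{k}$ uniformly in $s\geq0$ and $\eta\in\operatorname{supp}\chi$, which yields a bound on all the required derivatives that is uniform in $s$. Collecting these estimates gives $\|\mathcal{F}^{-1}[m_{s}]\|_{L^{1}}\leq\mathcal{K}$ uniformly in $s$, and combined with the extracted factor $e^{-\kappa\lambda^{\alpha}t}$ this completes the proof.
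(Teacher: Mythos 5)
Your proof is correct and follows essentially the same route as the cited source: the paper itself gives no proof of this lemma (it quotes it from \cite{WY08}), and the standard argument there and in \cite{BCD11} is exactly your scheme --- multiply by a cutoff $\chi$ equal to $1$ on the ring, rescale $\xi=\lambda\eta$, $s=\lambda^{\alpha}t$ to reduce to $\lambda=1$, extract the factor $e^{-\kappa s}$ using $|\eta|^{\alpha}\geq c_{0}^{\alpha}$ on $\operatorname{supp}\chi$, and bound $\|\mathcal{F}^{-1}[\chi e^{-s(|\cdot|^{\alpha}-\kappa)}]\|_{L^{1}}$ uniformly in $s$ by a weighted estimate on the Fourier side (your Cauchy--Schwarz/Plancherel variant with weight $(1+|x|^{2})^{N/2}$, $N>n/2$, is an equally valid substitute for the usual $\sup_{x}(1+|x|^{2})^{N}|\cdot|$ bound), after which Young's inequality gives the claim for all $p\in[1,\infty]$ at once. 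The only point worth flagging is that the lemma as printed contains a sign typo ($e^{t\Lambda^{\alpha}}$ should be the semigroup $e^{-t\Lambda^{\alpha}}$, as it is used in \eqref{eq3.23}), and you correctly proved the intended statement.
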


\section{The case  $1<\alpha\leq 2$:  Proof of Theorem \ref{th1.1}}
In this section we prove Gevrey analyticity of the system \eqref{eq1.1} in critical Besov space $\dot{B}^{-\alpha+\frac{n}{p}}_{p,q}(\mathbb{R}^{n})$ with $1<\alpha\leq 2$, $1<p\leq\infty$ and $1\leq q\leq \infty$. The proof is based on an adequate modification of the proof of
local in time existence with any initial data and global in time existence with small initial data to the system \eqref{eq1.1}, thus  we begin with the detailed proof of the first part of
Theorem
\ref{th1.1}.

\subsection{The case $1\leq p<\infty$: Well-posedness}
In this subsection, we intend to establish local well-posedness with any initial data and global well-posedness with small initial data to the system \eqref{eq1.1} in critical Besov space $\dot{B}^{-\alpha+\frac{n}{p}}_{p,q}(\mathbb{R}^{n})$ for $1\leq p<\infty$.  Firstly we are concerned with the Cauchy problem of the fractional power dissipative equation:
\begin{equation}\label{eq3.1}
\begin{cases}
  \partial_{t}u+\Lambda^{\alpha} u= f, \ \
  &x\in\mathbb{R}^{n}, \ t>0,\\
  u(x,0)=u_{0}(x), \ \ &x\in\mathbb{R}^{n}.
\end{cases}
\end{equation}
\begin{proposition}\label{pro3.1} {\em (\cite{BW09})}
Let $s\in \mathbb{R}$, $1\leq p,q,\rho_1\leq\infty$ and
$0<T\leq\infty$. Assume that $u_{0}\in
\dot{B}^{s}_{p,q}(\mathbb{R}^{n})$ and $f\in\widetilde{L}^{\rho_1}_{T}(\dot{B}^{s+\frac{\alpha}{\rho_{1}}-\alpha}_{p,q}(\mathbb{R}^{n}))$. Then \eqref{eq3.1} has
a unique solution $u\in\underset{\rho_1\leq
\rho\leq\infty}{\cap}\widetilde{L}^{\rho}_{T}(\dot{B}^{s+\frac{\alpha}{\rho}}_{p,q}(\mathbb{R}^{n}))$. In addition, there exists a
constant $C>0$ depending only on $\alpha $ and $n$ such that for any $\rho_1\leq
\rho\leq\infty$, we have
\begin{equation}\label{eq3.2}
  \|u\|_{\widetilde{L}^{\rho}_{T}(\dot{B}^{s+\frac{\alpha}{\rho}}_{p,q})}\leq
  C\big(\|u_{0}\|_{\dot{B}^{s}_{p,q}}+\|f\|_{\widetilde{L}^{\rho_1}_{T}(\dot{B}^{s+\frac{\alpha}{\rho_{1}}-\alpha}_{p,q})}\big).
\end{equation}
In particular,  if $f\in\widetilde{L}^{1}_{T}(\dot{B}^{s}_{p,q}(\mathbb{R}^{n}))$, then we have
\begin{equation}\label{eq3.3}
  \|u\|_{\widetilde{L}^{\infty}_{T}(\dot{B}^{s}_{p,q})\cap\widetilde{L}^{1}_{T}(\dot{B}^{s+\alpha}_{p,q})}\leq
  C\big(\|u_{0}\|_{\dot{B}^{s}_{p,q}}+\|f\|_{\widetilde{L}^{1}_{T}(\dot{B}^{s}_{p,q})}\big).
\end{equation}
\end{proposition}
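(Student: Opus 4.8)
The plan is to work directly from Duhamel's formula
$$u(t) = e^{-t\Lambda^{\alpha}}u_0 + \int_0^t e^{-(t-\tau)\Lambda^{\alpha}} f(\tau)\,d\tau,$$
and to estimate each dyadic block separately, which is natural because the Chemin--Lerner norm $\|\cdot\|_{\widetilde{L}^{\rho}_T(\dot{B}^{s}_{p,q})}$ takes the time norm \emph{inside} the $\ell^{q}$ summation over frequencies. First I would apply $\Delta_j$ to both sides; since $\Delta_j$, $e^{-t\Lambda^{\alpha}}$ and $\Lambda^{\alpha}$ are all Fourier multipliers they commute, giving $\Delta_j u(t) = e^{-t\Lambda^{\alpha}}\Delta_j u_0 + \int_0^t e^{-(t-\tau)\Lambda^{\alpha}}\Delta_j f(\tau)\,d\tau$. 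Because $\mathcal{F}(\Delta_j u_0)$ is supported in the shell $2^{j}\mathcal{C}$, Lemma \ref{le2.5} yields the pointwise-in-time bound $\|e^{-t\Lambda^{\alpha}}\Delta_j u_0\|_{L^{p}}\le \mathcal{K}\, e^{-\kappa 2^{j\alpha}t}\|\Delta_j u_0\|_{L^{p}}$, and likewise for the integrand.

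For the homogeneous part I would take the $L^{\rho}(0,T)$ norm in time and use $\|e^{-\kappa 2^{j\alpha}t}\|_{L^{\rho}(0,\infty)} = (\kappa\rho\,2^{j\alpha})^{-1/\rho} = C\,2^{-j\alpha/\rho}$, so that $2^{j(s+\alpha/\rho)}\|e^{-t\Lambda^{\alpha}}\Delta_j u_0\|_{L^{\rho}_T(L^{p})} \le C\,2^{js}\|\Delta_j u_0\|_{L^{p}}$; taking $\ell^{q}_j$ returns $C\|u_0\|_{\dot{B}^{s}_{p,q}}$. For the forcing term the key observation is that $\|\Delta_j u(t)\|_{L^{p}}$ is controlled by the time convolution $(g_j * h_j)(t)$ with $g_j(t)=e^{-\kappa 2^{j\alpha}t}\mathbf{1}_{t>0}$ and $h_j(t)=\|\Delta_j f(t)\|_{L^{p}}\mathbf{1}_{(0,T)}(t)$. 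Young's inequality in time with $1+\tfrac1\rho = \tfrac1m + \tfrac1{\rho_1}$ gives $\|g_j*h_j\|_{L^{\rho}_T}\le \|g_j\|_{L^{m}}\|h_j\|_{L^{\rho_1}_T}$, where $\|g_j\|_{L^{m}(0,\infty)}=C\,2^{-j\alpha/m}$. The exponent bookkeeping then works out exactly: $2^{j(s+\alpha/\rho)}\cdot 2^{-j\alpha/m} = 2^{j(s+\alpha/\rho_1-\alpha)}$, because $\tfrac\alpha m = \alpha + \tfrac\alpha\rho - \tfrac\alpha{\rho_1}$, so summing in $\ell^{q}_j$ reproduces the norm $\|f\|_{\widetilde{L}^{\rho_1}_T(\dot{B}^{s+\alpha/\rho_1-\alpha}_{p,q})}$. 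Adding the two contributions gives \eqref{eq3.2}, and \eqref{eq3.3} is the special case $\rho_1=1$ (so that $s+\alpha/\rho_1-\alpha=s$) evaluated at $\rho=\infty$ and $\rho=1$.

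The one place requiring care is the admissibility of Young's inequality: it demands $m\ge 1$, i.e. $\tfrac1m = 1-\tfrac1{\rho_1}+\tfrac1\rho \le 1$, which is precisely the hypothesis $\rho_1\le\rho\le\infty$; this both justifies the estimate and explains the stated range of $\rho$. Uniqueness and existence are then essentially free: existence is read off the explicit Duhamel representation, whose right-hand side is finite in the claimed space by the estimate just proved, while uniqueness follows from linearity, since the difference of two solutions solves \eqref{eq3.1} with zero data and zero forcing, which \eqref{eq3.2} forces to vanish. The main (mild) obstacle is thus not conceptual but the careful tracking of the three indices --- the $\ell^{q}$ frequency sum, the $L^{\rho}$ time norm, and the $L^{p}$ space norm --- together with verifying the exponent identity $\tfrac\alpha m = \alpha + \tfrac\alpha\rho - \tfrac\alpha{\rho_1}$ that places the forcing contribution in exactly the claimed Besov regularity.
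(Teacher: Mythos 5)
Your proof is correct. The paper itself gives no proof of Proposition \ref{pro3.1} (it is quoted from \cite{BW09}), but your argument --- Duhamel's formula, frequency localization, the semigroup decay of Lemma \ref{le2.5} on each dyadic block, and Young's convolution inequality in time with the exponent identity $\frac{\alpha}{m}=\alpha+\frac{\alpha}{\rho}-\frac{\alpha}{\rho_1}$ --- is exactly the standard proof, and it is the same scheme the paper itself carries out in proving the Gevrey analogue (Proposition \ref{pro3.5}, estimates \eqref{eq3.25}--\eqref{eq3.26}).
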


Next, by using in a fundamental way the algebraical structure of  the system \eqref{eq1.1}, we establish the following crucial bilinear estimates in time dependent Besov spaces.

\begin{lemma}\label{le3.2} Let $s>0$, $1\leq p<\infty$, $1\leq q, \rho, \rho_{1}, \rho_{2}\leq \infty$ with $\frac{1}{\rho}=\frac{1}{\rho_{1}}+\frac{1}{\rho_{2}}$. Then for any $\varepsilon>0$,  $0<T\leq\infty$, we have
\begin{align}\label{eq3.4}
     \|u\nabla(-\Delta)^{-1}v+v\nabla(-\Delta)^{-1}u\|_{\widetilde{L}^{\rho}_{T}(\dot{B}^{s}_{p,q})}&\lesssim
     \|u\|_{\widetilde{L}^{\rho_{1}}_{T}(\dot{B}^{s+\varepsilon}_{p,q})}\|v\|_{\widetilde{L}^{\rho_{2}}_{T}(\dot{B}^{-1+\frac{n}{p}-\varepsilon}_{p,q})}\nonumber\\
     &\ \  +\|u\|_{\widetilde{L}^{\rho_{2}}_{T}(\dot{B}^{-1+\frac{n}{p}-\varepsilon}_{p,q})}\|v\|_{\widetilde{L}^{\rho_{1}}_{T}(\dot{B}^{s+\varepsilon}_{p,q})}.
\end{align}
Moreover, if we choose $\varepsilon=0$, then \eqref{eq3.4} also holds for $q=1$.
\end{lemma}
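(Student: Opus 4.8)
The plan is to reduce \eqref{eq3.4} to a single product estimate by symmetry, then prove that estimate by Bony's paraproduct decomposition. Since $u\nabla(-\Delta)^{-1}v+v\nabla(-\Delta)^{-1}u$ is symmetric in $(u,v)$ and the two summands on the right-hand side of \eqref{eq3.4} are interchanged by swapping $u$ and $v$, it is enough to establish
\[
\|f\nabla(-\Delta)^{-1}g\|_{\widetilde L^{\rho}_{T}(\dot B^{s}_{p,q})}\lesssim \|f\|_{\widetilde L^{\rho_{1}}_{T}(\dot B^{s+\varepsilon}_{p,q})}\|g\|_{\widetilde L^{\rho_{2}}_{T}(\dot B^{-1+\frac{n}{p}-\varepsilon}_{p,q})},
\]
and then apply it with $(f,g)=(u,v)$ to bound $u\nabla(-\Delta)^{-1}v$ by the first summand, and with $(f,g)=(v,u)$ to bound $v\nabla(-\Delta)^{-1}u$ by the second.

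First I would dispose of the nonlocal operator. Its symbol $i\xi|\xi|^{-2}$ is smooth away from the origin and homogeneous of degree $-1$, so Lemma \ref{le2.4} (with $m=-1$) shows that $\nabla(-\Delta)^{-1}$ maps $\dot B^{-1+\frac{n}{p}-\varepsilon}_{p,q}$ into $\dot B^{\frac{n}{p}-\varepsilon}_{p,q}$. The admissibility condition of Lemma \ref{le2.4} here reads $\frac{n}{p}-\varepsilon<\frac{n}{p}$, valid for $\varepsilon>0$, and degenerates to the endpoint $\frac{n}{p}-\varepsilon=\frac{n}{p}$ that is permitted exactly when $q=1$; this is the source of the dichotomy ``$\varepsilon>0$, or $\varepsilon=0$ with $q=1$''. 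As the operator acts only in the space variable, the bound persists inside the $\widetilde L^{\rho_{2}}_{T}$ norm, so with $w:=\nabla(-\Delta)^{-1}g$ the claim reduces to the purely multiplicative estimate
\[
\|fw\|_{\widetilde L^{\rho}_{T}(\dot B^{s}_{p,q})}\lesssim \|f\|_{\widetilde L^{\rho_{1}}_{T}(\dot B^{s+\varepsilon}_{p,q})}\|w\|_{\widetilde L^{\rho_{2}}_{T}(\dot B^{\frac{n}{p}-\varepsilon}_{p,q})}.
\]

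For the product I would invoke $fw=T_{f}w+T_{w}f+R(f,w)$ and estimate each piece blockwise, using Lemma \ref{le2.3} (Bernstein), the almost-orthogonality \eqref{eq2.2}, H\"older's inequality in time with $\frac{1}{\rho}=\frac{1}{\rho_{1}}+\frac{1}{\rho_{2}}$ applied at the level of a single dyadic block, and Young's inequality for the ensuing discrete convolutions in the $\ell^{q}$ index. In $T_{w}f$ the high-frequency factor is $f$ and $S_{j-1}w$ is taken in $L^{\infty}$; Bernstein rewrites $2^{kn/p}\|\Delta_{k}w\|_{L^{p}}=2^{k\varepsilon}\bigl(2^{k(\frac{n}{p}-\varepsilon)}\|\Delta_{k}w\|_{L^{p}}\bigr)$, and the geometric weight $2^{k\varepsilon}$ --- summable for $\varepsilon>0$, and only $\ell^{1}$-summable when $\varepsilon=0,\,q=1$ --- produces the loss $2^{j\varepsilon}$ that lowers the output regularity from $s+\varepsilon$ back to $s$. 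In $T_{f}w$ the roles reverse and one must instead control $\|S_{j-1}f\|_{L^{\infty}}\lesssim 2^{j(\frac{n}{p}-s-\varepsilon)}\|f\|_{\dot B^{s+\varepsilon}_{p,q}}$; this is precisely the point at which the index $s+\varepsilon$ must sit below the critical value $\frac{n}{p}$, and again the output regularity is $s$.

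I expect the remainder $R(f,w)$, together with the bookkeeping of the Chemin--Lerner norms, to be the main obstacle. There the two frequencies are comparable, so $\Delta_{i}R(f,w)$ collects contributions from all $j\gtrsim i$, and convergence of $\sum_{j\gtrsim i}2^{(i-j)s}$ rests on the hypothesis $s>0$ (equivalently, on $s+\frac{n}{p}>0$ for the pair of regularities); one also checks that for $p>2$ the remainder condition $s+\frac{n}{p}>n(\frac{2}{p}-1)$ holds automatically since $s>0$. Care is needed to carry out the time-H\"older step before summing in the frequency index, and to keep the low-frequency parts of the two paraproducts under control --- which is exactly what pins the regularities $s+\varepsilon$ and $\frac{n}{p}-\varepsilon$ on opposite sides of the critical index $\frac{n}{p}$. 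The endpoint $\varepsilon=0$ then survives only for $q=1$, consistently with both the multiplier bound of Lemma \ref{le2.4} and the summation in $T_{w}f$.
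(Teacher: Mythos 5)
Your proposal has a genuine gap, and it occurs in the very first step. The reduction by symmetry to the single-product estimate
\[
\|f\nabla(-\Delta)^{-1}g\|_{\widetilde L^{\rho}_{T}(\dot B^{s}_{p,q})}\lesssim \|f\|_{\widetilde L^{\rho_{1}}_{T}(\dot B^{s+\varepsilon}_{p,q})}\|g\|_{\widetilde L^{\rho_{2}}_{T}(\dot B^{-1+\frac{n}{p}-\varepsilon}_{p,q})}
\]
is a \emph{strictly stronger} statement than \eqref{eq3.4}, and it is false under the lemma's hypotheses, which allow arbitrary $s>0$ and arbitrary $\varepsilon>0$. As you yourself note, your estimate of $T_{f}w$ forces $\|S_{j-1}f\|_{L^{\infty}}\lesssim 2^{j(\frac{n}{p}-s-\varepsilon)}\|f\|_{\dot B^{s+\varepsilon}_{p,\infty}}$, which requires $s+\varepsilon<\frac{n}{p}$; but this condition is nowhere assumed in the lemma (and fails, e.g., for $s=n$, $\varepsilon=1$). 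It is not a technicality of the method: taking $w$ a fixed nonzero function with frequencies near $1$ and $f_{N}(x)=\phi(2^{-N}x)$ with $\widehat{\phi}$ supported in a small ball and $\phi(0)=1$, one has $\|f_{N}\|_{\dot B^{s+\varepsilon}_{p,q}}\sim 2^{N(\frac{n}{p}-s-\varepsilon)}\to 0$ when $s+\varepsilon>\frac{n}{p}$, while $f_{N}w\to w$, so $\|f_{N}w\|_{\dot B^{s}_{p,q}}$ stays bounded away from $0$. Hence the reduced inequality cannot hold in that range, even though \eqref{eq3.4} does. The point of the symmetric formulation \eqref{eq3.4} — and of the paper's proof — is precisely that one never needs to measure a \emph{low-frequency} factor in the \emph{high-regularity} norm: the paper estimates the sum $u\nabla(-\Delta)^{-1}v+v\nabla(-\Delta)^{-1}u$ as a whole, and in every paraproduct piece ($I_{1}$, $I_{2}$) the low-frequency factor, whichever of $u,v$ it happens to be, is put in $\dot B^{-1+\frac{n}{p}-\varepsilon}_{p,q}$ (so the geometric sums only need $\varepsilon>0$, resp. $1+\varepsilon>0$), while the high-frequency factor goes in $\dot B^{s+\varepsilon}_{p,q}$; this is exactly why \emph{both} cross terms appear on the right of \eqref{eq3.4}. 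Your argument becomes correct if you abandon the reduction and instead bound each of $T_{u}\big(\nabla(-\Delta)^{-1}v\big)$-type terms by the \emph{second} summand and $T_{\nabla(-\Delta)^{-1}v}u$-type terms by the first, i.e.\ assign norms by frequency role rather than by which unknown the factor comes from.

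Two secondary remarks. First, your treatment of the remainder (H\"older with one factor in $L^{\infty}$ via Bernstein, then $\sum_{j\gtrsim i}2^{(i-j)s}<\infty$ from $s>0$) is legitimate and simpler than the paper's: the paper instead exploits the algebraic structure, rewriting $I_{3}=K_{1}+K_{2}+K_{3}$ with derivatives pulled out of products of the smoothed blocks $(-\Delta)^{-1}\Delta_{j'}u$, and splits the cases $2\leq p<\infty$ and $1\leq p<2$; this buys convergence under the much weaker conditions $1+s+\frac{n}{p}>0$, $2+n+s-\frac{n}{p}>0$, etc., rather than $s>0$, which matters because the lemma is later invoked at the regularity $s=-1+\frac{n}{p}$. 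Second, your passage through Lemma \ref{le2.4} to replace $\nabla(-\Delta)^{-1}g$ by $w\in\dot B^{\frac{n}{p}-\varepsilon}_{p,q}$, including the endpoint $\varepsilon=0$, $q=1$, is correctly justified; the dichotomy ``$\varepsilon>0$, or $\varepsilon=0$ with $q=1$'' indeed also surfaces there, but the binding occurrence in the paper's proof is the convergence of the low-frequency sums in $I_{1}$, not the multiplier bound.
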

\begin{proof}
Thanks to Bony's paraproduct decomposition, we have
\begin{equation}\label{eq3.5}
    u\nabla(-\Delta)^{-1}v+v\nabla(-\Delta)^{-1}u:=I_{1}+I_{2}+I_{3},
\end{equation}
where
\begin{align*}
    I_{1}:&=\sum_{j'\in\mathbb{Z}}\Delta_{j'}u\nabla(-\Delta)^{-1}S_{j'-1}v+\Delta_{j'}v\nabla(-\Delta)^{-1}S_{j'-1}u;\\
    I_{2}:&=\sum_{j'\in\mathbb{Z}}S_{j'-1}u\nabla(-\Delta)^{-1}\Delta_{j'}v+S_{j'-1}v\nabla(-\Delta)^{-1}\Delta_{j'}u;\\
    I_{3}:&=\sum_{j'\in\mathbb{Z}}\sum_{|j'-j''|\leq1}\Delta_{j'}u\nabla(-\Delta)^{-1}\Delta_{j''}v+\Delta_{j'}v\nabla(-\Delta)^{-1}\Delta_{j''}u.
\end{align*}
In the sequel, we estimate $I_{i}$ ($i=1,2,3$) one by one.  For $I_{1}$, we need only to deal with the first term  $\sum_{j'\in\mathbb{Z}}\Delta_{j'}u\nabla(-\Delta)^{-1}S_{j'-1}v$, while the second one can be done analogously,  thus using the facts \eqref{eq2.1} and \eqref{eq2.2},  and applying H\"{o}lder's inequality and Lemmas \ref{le2.3} and \ref{le2.4}, one has
\begin{align}\label{eq3.6}
  \|\Delta_{j}\sum_{j'\in\mathbb{Z}}\Delta_{j'}u\nabla(-\Delta)^{-1}S_{j'-1}v\|_{L^{\rho}_{T}(L^{p})}
 & \lesssim \sum_{|j'-j|\leq 4}\|\Delta_{j'}u\|_{L^{\rho_{1}}_{T}(L^{p})}\|\nabla(-\Delta)^{-1}S_{j'-1}v\|_{L^{\rho_{2}}_{T}(L^{\infty})}\nonumber\\
 & \lesssim \sum_{|j'-j|\leq 4}\|\Delta_{j'}u\|_{L^{\rho_{1}}_{T}(L^{p})}\sum_{k\leq j'-2}2^{(-1+\frac{n}{p})k}\|\Delta_{k}v\|_{L^{\rho_{2}}_{T}(L^{p})}\nonumber\\
  &\lesssim \sum_{|j'-j|\leq 4}\|\Delta_{j'}u\|_{L^{\rho_{1}}_{T}(L^{p})}\sum_{k\leq j'-2}2^{\varepsilon k}2^{(-1+\frac{n}{p}-\varepsilon)k}\|\Delta_{k}v\|_{L^{\rho_{2}}_{T}(L^{p})}\nonumber\\
  &\lesssim \sum_{|j'-j|\leq 4}2^{-sj'}2^{(s+\varepsilon)j'}\|\Delta_{j'}u\|_{L^{\rho_{1}}_{T}(L^{p})}\|v\|_{\widetilde{L}^{\rho_{2}}_{T}(\dot{B}^{-1+\frac{n}{p}-\varepsilon}_{p,q})}.
\end{align}
Multiplying \eqref{eq3.6}  by $2^{sj}$, then taking $l^{q}$ norm to the resulting inequality, we obtain
\begin{align*}
      \|\sum_{j'\in\mathbb{Z}}\Delta_{j'}u\nabla(-\Delta)^{-1}S_{j'-1}v\|_{\widetilde{L}^{\rho}_{T}(\dot{B}^{s}_{p,q})}\lesssim
       \|u\|_{\widetilde{L}^{\rho_{1}}_{T}(\dot{B}^{s+\varepsilon}_{p,q})}\|v\|_{\widetilde{L}^{\rho_{2}}_{T}(\dot{B}^{-1+\frac{n}{p}-\varepsilon}_{p,q})},
\end{align*}
which implies that
\begin{align}\label{eq3.7}
  \|I_{1}\|_{\widetilde{L}^{\rho}_{T}(\dot{B}^{s}_{p,q})}\lesssim
       \|u\|_{\widetilde{L}^{\rho_{1}}_{T}(\dot{B}^{s+\varepsilon}_{p,q})}\|v\|_{\widetilde{L}^{\rho_{2}}_{T}(\dot{B}^{-1+\frac{n}{p}-\varepsilon}_{p,q})}
       +\|u\|_{\widetilde{L}^{\rho_{2}}_{T}(\dot{B}^{-1+\frac{n}{p}-\varepsilon}_{p,q})}\|v\|_{\widetilde{L}^{\rho_{1}}_{T}(\dot{B}^{s+\varepsilon}_{p,q})}.
\end{align}
Similarly, for the first term of $I_{2}$,  applying H\"{o}lder's inequality and Lemmas
\ref{le2.3} and \ref{le2.4} again, we see that
\begin{align}\label{eq3.8}
   \|\Delta_{j}\sum_{j'\in\mathbb{Z}}S_{j'-1}u\nabla(-\Delta)^{-1}\Delta_{j'}v\|_{L^{\rho}_{T}(L^{p})}
   & \lesssim \sum_{|j'-j|\leq 4}\sum_{k\leq j'-2}2^{\frac{n}{p}k}\|\Delta_{k}u\|_{L^{\rho_{2}}_{T}(L^{p})}2^{-j'}\|\Delta_{j'}v\|_{L^{\rho_{1}}_{T}(L^{p})}\nonumber\\
  & \lesssim \sum_{|j'-j|\leq 4}\sum_{k\leq j'-2}2^{(1+\varepsilon)k}2^{(-1+\frac{n}{p}-\varepsilon)k}\|\Delta_{k}u\|_{L^{\rho_{2}}_{T}(L^{p})}2^{-j'}\|\Delta_{j'}v\|_{L^{\rho_{1}}_{T}(L^{p})}\nonumber\\
  &\lesssim \sum_{|j'-j|\leq 4}2^{-sj'}2^{(s+\varepsilon)j'}\|\Delta_{j'}v\|_{L^{\rho_{1}}_{T}(L^{p})}\|u\|_{\widetilde{L}^{\rho_{2}}_{T}(\dot{B}^{-1+\frac{n}{p}-\varepsilon}_{p,q})},
\end{align}
which yields directly that
\begin{align*}
  \|\sum_{j'\in\mathbb{Z}}S_{j'-1}u\nabla(-\Delta)^{-1}\Delta_{j'}v\|_{\widetilde{L}^{\rho}_{T}(\dot{B}^{s}_{p,q})}\lesssim
       \|u\|_{\widetilde{L}^{\rho_{2}}_{T}(\dot{B}^{-1+\frac{n}{p}-\varepsilon}_{p,q})}\|v\|_{\widetilde{L}^{\rho_{1}}_{T}(\dot{B}^{s+\varepsilon}_{p,q})}.
\end{align*}
Thus,
\begin{align}\label{eq3.9}
  \|I_{2}\|_{\widetilde{L}^{\rho}_{T}(\dot{B}^{s}_{p,q})}\lesssim
       \|u\|_{\widetilde{L}^{\rho_{1}}_{T}(\dot{B}^{s+\varepsilon}_{p,q})}\|v\|_{\widetilde{L}^{\rho_{2}}_{T}(\dot{B}^{-1+\frac{n}{p}-\varepsilon}_{p,q})}
       +\|u\|_{\widetilde{L}^{\rho_{2}}_{T}(\dot{B}^{-1+\frac{n}{p}-\varepsilon}_{p,q})}\|v\|_{\widetilde{L}^{\rho_{1}}_{T}(\dot{B}^{s+\varepsilon}_{p,q})}.
\end{align}
Now we tackle with the most difficult term $I_{3}$.  Based on careful analysis of the algebraical structure of the system \eqref{eq1.1},  we can split $I_{3}$ into the following three terms for $m=1,2,\cdots, n$:
\begin{equation}\label{eq3.10}
  I_{3}:=K_{1}+K_{2}+K_{3},
\end{equation}
where
\begin{align*}
    K_{1}:&=\sum_{j'\in\mathbb{Z}}\sum_{|j'-j''|\leq1}(-\Delta)\Big{\{}\big{(}(-\Delta)^{-1}\Delta_{j'}u\big{)}\big{(}\partial_{m}(-\Delta)^{-1}\Delta_{j''}v\big{)}\Big{\}};\\
    K_{2}:&=\sum_{j'\in\mathbb{Z}}\sum_{|j'-j''|\leq1}2\nabla\cdot\Big{\{}\big{(}(-\Delta)^{-1}\Delta_{j'}u\big{)}\big{(}\partial_{m}\nabla(-\Delta)^{-1}\Delta_{j''}v\big{)}\Big{\}};\\
    K_{3}:&=\sum_{j'\in\mathbb{Z}}\sum_{|j'-j''|\leq1}\partial_{m}\Big{\{}\big{(}(-\Delta)^{-1}\Delta_{j'}u\big{)}\Delta_{j''}v\Big{\}}.
\end{align*}
Moreover, since $K_{2}$ can be treated similarly to $K_{3}$, we treat $K_{1}$ and $K_{3}$
only. We first consider the case $2\leq p<\infty$, by using H\"{o}lder's inequality and Lemmas \ref{le2.3} and \ref{le2.4},  it follows from \eqref{eq2.2} that there exists $N_{0}\in\mathbb{N}$ such that
\begin{align}\label{eq3.11}
    \|\Delta_{j}K_{1}\|_{L^{\rho}_{T}(L^{p})}&\lesssim 2^{(2+\frac{n}{p})j}\sum_{j'\geq j-N_{0}}\sum_{|j'-j''|\leq1}
    \|(-\Delta)^{-1}\Delta_{j'}u\|_{L^{\rho_{1}}_{T}(L^{p})}\|\partial_{m}(-\Delta)^{-1}\Delta_{j''}v\|_{L^{\rho_{2}}_{T}(L^{p})}\nonumber\\
    &\lesssim 2^{(2+\frac{n}{p})j}\sum_{j'\geq j-N_{0}}\sum_{|j'-j''|\leq1}2^{(-2-s-\frac{n}{p})j'}2^{(s+\varepsilon)}
    \|\Delta_{j'}u\|_{L^{\rho_{1}}_{T}(L^{p})}2^{(-1+\frac{n}{p}-\varepsilon)j''}\|\Delta_{j''}v\|_{L^{\rho_{2}}_{T}(L^{p})}\nonumber\\
    &\lesssim 2^{-sj}\sum_{j'\geq j-N_{0}}2^{-(2+s+\frac{n}{p})(j'-j)}2^{(s+\varepsilon)j'}
    \|\Delta_{j'}u\|_{L^{\rho_{1}}_{T}(L^{p})}\|v\|_{L^{\rho_{2}}_{T}(\dot{B}^{-1+\frac{n}{p}-\varepsilon}_{p,q})}.
\end{align}
\begin{align}\label{eq3.12}
    \|\Delta_{j}K_{3}\|_{L^{\rho}_{T}(L^{p})}&\lesssim 2^{(1+\frac{n}{p})j}\sum_{j'\geq j-N_{0}}\sum_{|j'-j''|\leq1}
    \|(-\Delta)^{-1}\Delta_{j'}u\|_{L^{\rho_{1}}_{T}(L^{p})}\|\Delta_{j''}v\|_{L^{\rho_{2}}_{T}(L^{p})}\nonumber\\
   &\lesssim 2^{(1+\frac{n}{p})j}\sum_{j'\geq j-N_{0}}\sum_{|j'-j''|\leq1}2^{(-1-s-\frac{n}{p})j'}2^{(s+\varepsilon)j'}
    \|\Delta_{j'}u\|_{L^{\rho_{1}}_{T}(L^{p})}2^{(-1+\frac{n}{p}-\varepsilon)j''}\|\Delta_{j''}v\|_{L^{\rho_{2}}_{T}(L^{p})}\nonumber\\
    &\lesssim 2^{-sj}\sum_{j'\geq j-N_{0}}2^{-(1+s+\frac{n}{p})(j'-j)}2^{(s+\varepsilon)j'}
    \|\Delta_{j'}u\|_{L^{\rho_{1}}_{T}(L^{p})}\|v\|_{L^{\rho_{2}}_{T}(\dot{B}^{-1+\frac{n}{p}-\varepsilon}_{p,q})}.
\end{align}
On the other hand, in the case that $1\leq p<2$, we choose $2<p'\leq\infty$ such that $\frac{1}{p}+\frac{1}{p'}=1$, it follows that
\begin{align}\label{eq3.13}
    \|\Delta_{j}K_{1}\|_{L^{\rho}_{T}(L^{p})}&\lesssim 2^{(2+n-\frac{n}{p})j}\sum_{j'\geq j-N_{0}}\sum_{|j'-j''|\leq1}
    \|(-\Delta)^{-1}\Delta_{j'}u\|_{L^{\rho_{1}}_{T}(L^{p'})}\|\partial_{m}(-\Delta)^{-1}\Delta_{j''}v\|_{L^{\rho_{2}}_{T}(L^{p})}\nonumber\\
    &\lesssim 2^{(2+n-\frac{n}{p})j}\sum_{j'\geq j-N_{0}}\sum_{|j'-j''|\leq1}2^{(-2+n(\frac{1}{p}-\frac{1}{p'}))j'}
    \|\Delta_{j'}u\|_{L^{\rho_{1}}_{T}(L^{p})}2^{-j''}\|\Delta_{j''}v\|_{L^{\rho_{2}}_{T}(L^{p})}\nonumber\\
    &\lesssim 2^{(2+n-\frac{n}{p})j}\sum_{j'\geq j-N_{0}}2^{-(2+n+s-\frac{n}{p})j'}2^{(s+\varepsilon)j'}
    \|\Delta_{j'}u\|_{L^{\rho_{1}}_{T}(L^{p})}2^{(-1+\frac{n}{p}-\varepsilon)j'}\|\Delta_{j'}v\|_{L^{\rho_{2}}_{T}(L^{p})}\nonumber\\
    &\lesssim 2^{-sj}\sum_{j'\geq j-N_{0}}2^{-(2+n+s-\frac{n}{p})(j'-j)}2^{(s+\varepsilon)j'}
    \|\Delta_{j'}u\|_{L^{\rho_{1}}_{T}(L^{p})}\|v\|_{L^{\rho_{2}}_{T}(\dot{B}^{-1+\frac{n}{p}-\varepsilon}_{p,q})}.
\end{align}
\begin{align}\label{eq3.14}
    \|\Delta_{j}K_{3}\|_{L^{\rho}_{T}(L^{p})}&\lesssim 2^{(1+n-\frac{n}{p})j}\sum_{j'\geq j-N_{0}}\sum_{|j'-j''|\leq1}
    \|(-\Delta)^{-1}\Delta_{j'}u\|_{L^{\rho_{1}}_{T}(L^{p'})}\|\Delta_{j''}v\|_{L^{\rho_{2}}_{T}(L^{p})}\nonumber\\
    &\lesssim 2^{(1+n-\frac{n}{p})j}\sum_{j'\geq j-N_{0}}\sum_{|j'-j''|\leq1}2^{(-2+n(\frac{1}{p}-\frac{1}{p'}))j'}
    \|\Delta_{j'}u\|_{L^{\rho_{1}}_{T}(L^{p})}\|\Delta_{j''}v\|_{L^{\rho_{2}}_{T}(L^{p})}\nonumber\\
    &\lesssim 2^{(1+n-\frac{n}{p})j}\sum_{j'\geq j-N_{0}}2^{-(1+n+s-\frac{n}{p})j'}2^{(s+\varepsilon)j'}
    \|\Delta_{j'}u\|_{L^{\rho_{1}}_{T}(L^{p})}2^{(-1+\frac{n}{p}-\varepsilon)j'}\|\Delta_{j'}v\|_{L^{\rho_{2}}_{T}(L^{p})}\nonumber\\
   &\lesssim 2^{-sj}\sum_{j'\geq j-N_{0}}2^{-(1+n+s-\frac{n}{p})(j'-j)}2^{(s+\varepsilon)j'}
    \|\Delta_{j'}u\|_{L^{\rho_{1}}_{T}(L^{p})}\|v\|_{L^{\rho_{2}}_{T}(\dot{B}^{-1+\frac{n}{p}-\varepsilon}_{p,q})}.
\end{align}
Note that  under the hypotheses of Lemma \ref{le3.2},  we have
$$
    2+s+\frac{n}{p}>0,\  1+s+\frac{n}{p}>0, \ 2+n+s-\frac{n}{p}>0, \ 1+n+s-\frac{n}{p}>0.
$$
Then we infer from the estimates \eqref{eq3.11}--\eqref{eq3.14}  that  for all  $1\leq p<\infty$,
\begin{align}\label{eq3.15}
      \| I_{3}\|_{\widetilde{L}^{\rho}_{T}(\dot{B}^{s}_{p,q})}\lesssim
       \|u\|_{\widetilde{L}^{\rho_{1}}_{T}(\dot{B}^{s+\varepsilon}_{p,q})}\|v\|_{\widetilde{L}^{\rho_{2}}_{T}(\dot{B}^{-1+\frac{n}{p}-\varepsilon}_{p,q})}.
\end{align}
Hence, plugging \eqref{eq3.7}, \eqref{eq3.9} and \eqref{eq3.15} into \eqref{eq3.5}, we get \eqref{eq3.4}. We complete the proof of Lemma \ref{le3.2}.
\end{proof}

\medskip

Now we are in a position to prove well-posedness of the system \eqref{eq1.1} in the case that $1<\alpha\leq 2$ and $1\leq p<\infty$.
Define  the map
\begin{equation}\label{eq3.16}
    \mathbb{F}: \ u(t)\rightarrow e^{-t\Lambda^{\alpha}}u_0-\int_0^te^{-(t-\tau)\Lambda^{\alpha}}\nabla\cdot\left(u\nabla(-\Delta)^{-1}u\right)(\tau)d\tau
\end{equation}
in the metric space ($I=[0,T]$):
\begin{align*}
    \mathcal{D}_{T}:=\left\{ u:\ \|u\|_{\widetilde{L}^{\rho_{1}}_{T}(\dot{B}^{s_{1}}_{p,q})\cap\widetilde{L}^{\rho_{2}}_{T}(\dot{B}^{s_{2}}_{p,q})}\leq \eta, \ \ \ d(u,v) := \|u-v\|_{\widetilde{L}^{\rho_{1}}_{T}(\dot{B}^{s_{1}}_{p,q})\cap\widetilde{L}^{\rho_{2}}_{T}(\dot{B}^{s_{2}}_{p,q})}\right\}
\end{align*}
with
\begin{equation*}
     s_{1}=-1+\frac{n}{p}+\varepsilon,\ \ s_{2}=-1+\frac{n}{p}-\varepsilon, \ \ \rho_{1}=\frac{\alpha}{\alpha-1+\varepsilon},
     \ \ \rho_{2}=\frac{\alpha}{\alpha-1-\varepsilon}, \ \ 0<\varepsilon<\alpha-1.
\end{equation*}
Applying Proposition \ref{pro3.1} and Lemma \ref{le3.2} by choosing $\rho=\frac{\alpha}{2\alpha-2}$, for any $u,v\in\mathcal{D}_{T}$, we see that
\begin{align}\label{eq3.17}
    \|\mathbb{F}(u)\|_{\widetilde{L}^{\rho_{1}}_{T}(\dot{B}^{s_{1}}_{p,q})\cap\widetilde{L}^{\rho_{2}}_{T}(\dot{B}^{s_{2}}_{p,q})}
    &\lesssim \|e^{-t\Lambda^{\alpha}}u_{0}\|_{\widetilde{L}^{\rho_{1}}_{T}(\dot{B}^{s_{1}}_{p,q})\cap\widetilde{L}^{\rho_{2}}_{T}(\dot{B}^{s_{2}}_{p,q})}
    +\|u\nabla(-\Delta)^{-1}u\|_{\widetilde{L}^{\frac{\alpha}{2\alpha-2}}_{T}(\dot{B}^{-1+\frac{n}{p}}_{p,q})}\nonumber\\
    &\lesssim \|e^{-t\Lambda^{\alpha}}u_{0}\|_{\widetilde{L}^{\rho_{1}}_{T}(\dot{B}^{s_{1}}_{p,q})\cap\widetilde{L}^{\rho_{2}}_{T}(\dot{B}^{s_{2}}_{p,q})}
    +\|u\|_{\widetilde{L}^{\rho_{1}}_{T}(\dot{B}^{s_{1}}_{p,q})\cap\widetilde{L}^{\rho_{2}}_{T}(\dot{B}^{s_{2}}_{p,q})}^{2},
\end{align}
 and
\begin{equation}\label{eq3.18}
    d(\mathbb{F}(u), \mathbb{F}(v))
    \lesssim \eta d(u,v ) .
\end{equation}
 Based these two estimates \eqref{eq3.17} and \eqref{eq3.18}, applying the standard contraction mapping argument (cf. \cite{L02}), we can show that if we choose $T$ is properly small, then $\mathbb{F}$ is a contraction mapping from $(\mathcal{D}_{T}, d)$ into itself, we omit the details here. Therefore, there
 exists $u\in\mathcal{D}_{T}$  such that $\mathbb{F}(u)=u$, which is a unique solution of the system \eqref{eq1.1}. Moreover, by Proposition \ref{pro3.1}, we have
 \begin{equation*}
    \|u\|_{\widetilde{L}^{\infty}_{T}(\dot{B}^{-\alpha+\frac{n}{p}}_{p,q})}
    \lesssim \|u_{0}\|_{\dot{B}^{-\alpha+\frac{n}{p}}_{p,q}}
    +\|u\|_{\widetilde{L}^{\rho_{1}}_{T}(\dot{B}^{s_{1}}_{p,q})\cap\widetilde{L}^{\rho_{2}}_{T}(\dot{B}^{s_{2}}_{p,q})}^{2}
    \lesssim \|u_{0}\|_{\dot{B}^{-\alpha+\frac{n}{p}}_{p,q}}+\eta^{2}.
\end{equation*}
Thus the solution $u$ can be extended step by step and finally there is a maximal time $T^{*}$ such that
\begin{equation*}
  u\in \widetilde{L}^{\infty}(0,T^{*}; \dot{B}^{-\alpha+\frac{n}{p}}_{p,q}(\mathbb{R}^{n}))\cap\widetilde{L}^{\rho_{1}}(0,T^{*}; \dot{B}^{s_1}_{p,q}(\mathbb{R}^{n}))
     \cap\widetilde{L}^{\rho_{2}}(0,T^{*}; \dot{B}^{s_2}_{p,q}(\mathbb{R}^{n})).
\end{equation*}
If $T^{*}<\infty$ and $\|u\|_{\widetilde{L}^{\rho_{1}}_{T^{*}}(\dot{B}^{s_{1}}_{p,q})\cap\widetilde{L}^{\rho_{2}}_{T^{*}}(\dot{B}^{s_{2}}_{p,q})}<\infty$, we claim that the solution can be extended beyond the maximal time $T^{*}$. Indeed, let us consider the integral equation
\begin{equation}\label{eq3.19}
  u(t)=e^{-(t-T)\Lambda^{\alpha}}u(T)-\int_{T}^{t}e^{-(t-\tau)\Lambda^{\alpha}}\nabla\cdot(u\nabla(-\Delta)^{-1}u)(\tau)d\tau.
\end{equation}
As we have proved before, we can show that if we choose $T $ sufficiently close to $T^{*}$, then
\begin{align}\label{eq3.20}
    \|u(t)\|_{\widetilde{L}^{\rho_{1}}(T,T^{*}; \dot{B}^{s_{1}}_{p,q})\cap\widetilde{L}^{\rho_{2}}(T,T^{*}; \dot{B}^{s_{2}}_{p,q})}
    &\leq \|u(T)\|_{\widetilde{L}^{\rho_{1}}(T,T^{*}; \dot{B}^{s_{1}}_{p,q})\cap\widetilde{L}^{\rho_{2}}(T,T^{*}; \dot{B}^{s_{2}}_{p,q})}\nonumber\\
    &+\|u\|_{\widetilde{L}^{\rho_{1}}(T,T^{*}; \dot{B}^{s_{1}}_{p,q})\cap\widetilde{L}^{\rho_{2}}(T,T^{*}; \dot{B}^{s_{2}}_{p,q})}^{2}.
\end{align}
Note that \eqref{eq3.20} is analogous to \eqref{eq3.17}, which yields immediately that the solution exists on $[T,T^{*}]$. This is a contradiction to the fact that $T^{*}$ is  maximal.    Moreover, observe that
if $\|u_{0}\|_{\dot{B}^{-\alpha+\frac{n}{p}}_{p,q}}$ is sufficiently small, we can directly choose $T=\infty$ in \eqref{eq3.17} and \eqref{eq3.18},  which yields global well-posedness of \eqref{eq1.1} with small initial data.  We conclude the proof of  the first part of Theorem \ref{th1.1}.
\subsection{The case $1<p<\infty$: Gevrey analyticity}

In this subsection, we prove analyticity of the system \eqref{eq1.1} with initial data in $\dot{B}^{-\alpha+\frac{n}{p}}_{p,q}(\mathbb{R}^{n})$ with $1<\alpha\leq 2$ and $1<p<\infty$. We first recall the  following  two elementary results.

\begin{lemma}\label{le3.3}{\em (Lemma 3.2 in \cite{BBT13})}
Consider the operator $E_{\alpha}:=e^{-[(t-s)^{\frac{1}{\alpha}}+s^{\frac{1}{\alpha}}-t^{\frac{1}{\alpha}}]\Lambda_{1}}$ for $0\leq s\leq t$. Then $E_{\alpha}$ is either
the identity operator or is the Fourier multiplier with $L^{1}$ kernel whose $L^{1}$-norm is bounded independent of $s$ and $t$.
\end{lemma}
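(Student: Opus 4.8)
The plan is to read off that $E_{\alpha}$ is precisely the Fourier multiplier with symbol $e^{-\mu(s,t)|\xi|_{1}}$, where I abbreviate $\mu(s,t):=(t-s)^{\frac{1}{\alpha}}+s^{\frac{1}{\alpha}}-t^{\frac{1}{\alpha}}$, and to reduce the whole statement to two facts: first, that $\mu(s,t)\geq 0$ for $0\leq s\leq t$, so that the exponential genuinely damps rather than amplifies frequencies; and second, that the inverse Fourier transform of $e^{-\mu|\xi|_{1}}$ has an $L^{1}$-norm that does not depend on the value of $\mu\geq 0$. Everything hinges on exploiting the $\ell^{1}$ structure of the symbol $|\xi|_{1}=\sum_{k=1}^{n}|\xi_{k}|$.

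First I would settle the sign of $\mu$. Writing $a=t-s\geq 0$ and $b=s\geq 0$, the claim $\mu\geq 0$ is exactly the subadditivity inequality $(a+b)^{\beta}\leq a^{\beta}+b^{\beta}$ with exponent $\beta=\frac{1}{\alpha}$; since $1<\alpha\leq 2$ gives $\frac{1}{\alpha}\in[\frac12,1)$, the map $r\mapsto r^{1/\alpha}$ is concave on $[0,\infty)$ and vanishes at $0$, so subadditivity holds and yields $t^{\frac{1}{\alpha}}=(a+b)^{\frac{1}{\alpha}}\leq a^{\frac{1}{\alpha}}+b^{\frac{1}{\alpha}}$, i.e. $\mu(s,t)\geq 0$. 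Equality in the strict-concavity case forces $a=0$ or $b=0$, that is $s=0$ or $s=t$ (and also the degenerate exponent $\alpha=1$), and in each of these cases the symbol is identically $1$, so $E_{\alpha}$ is the identity operator. This disposes of the first alternative in the statement.

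For the remaining case $\mu>0$ I would use the tensorization that the $\ell^{1}$ norm makes available: the symbol factors as $e^{-\mu|\xi|_{1}}=\prod_{k=1}^{n}e^{-\mu|\xi_{k}|}$, so its kernel is a product of one-dimensional kernels. Each one-dimensional factor $\mathcal{F}^{-1}\!\left[e^{-\mu|\cdot|}\right]$ is the Poisson kernel $P_{\mu}(x_{k})=\frac{1}{\pi}\,\frac{\mu}{x_{k}^{2}+\mu^{2}}$, which is nonnegative with $\|P_{\mu}\|_{L^{1}(\mathbb{R})}=1$ for every $\mu>0$. By Fubini the full kernel of $E_{\alpha}$ is $K(x)=\prod_{k=1}^{n}P_{\mu}(x_{k})$, and hence $\|K\|_{L^{1}(\mathbb{R}^{n})}=\prod_{k=1}^{n}\|P_{\mu}\|_{L^{1}(\mathbb{R})}=1$, uniformly in $\mu$ and therefore uniformly in $s,t$. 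This identifies $E_{\alpha}$ as convolution against an $L^{1}$ function of norm $1$, completing the second alternative.

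I do not expect any deep obstacle here; the proof is essentially a sign computation plus an explicit kernel identification. The one genuinely load-bearing idea is the factorization in the third paragraph, which is the whole point of quantifying the Gevrey operator by $\Lambda_{1}$ rather than by $\Lambda=\sqrt{-\Delta}$: for the Euclidean symbol $e^{-\mu|\xi|}$ the kernel does not tensorize and one loses the clean, dimension-free $L^{1}$ bound, whereas the $\ell^{1}$ symbol reduces the $n$-dimensional estimate to the trivial one-dimensional Poisson-kernel computation. The only minor point to keep honest is the normalization constant in the Fourier convention, which affects $P_{\mu}$ by a harmless constant but not the uniformity of the $L^{1}$ bound.
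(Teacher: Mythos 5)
Your proof is correct, and it is essentially the standard argument: nonnegativity of the exponent $(t-s)^{\frac{1}{\alpha}}+s^{\frac{1}{\alpha}}-t^{\frac{1}{\alpha}}$ via subadditivity of the concave map $r\mapsto r^{\frac{1}{\alpha}}$, followed by tensorization of $e^{-\mu|\xi|_{1}}$ into one-dimensional Poisson kernels of unit $L^{1}$ norm. Note that the paper itself offers no proof of this statement, quoting it as Lemma 3.2 of \cite{BBT13}; your argument is the one given in that reference, so there is nothing to compare beyond observing that you have correctly reconstructed it, including the identification of the degenerate cases $s=0$ and $s=t$ (and $\alpha=1$) where $E_{\alpha}$ is the identity.
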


\begin{lemma}\label{le3.4}{\em (Lemma 3.3 in \cite{BBT13})}
Assume that the operator $F_{\alpha}:=e^{t^{\frac{1}{\alpha}}\Lambda_{1}-\frac{1}{2}t\Lambda^{\alpha}}$ for $t\geq0$. Then $F_{\alpha}$ is the Fourier multiplier which maps boundedly $L^{p}\rightarrow L^{p}$ for $1<p<\infty$,
and its operator norm is uniformly bounded with respect to $t\geq 0$.
\end{lemma}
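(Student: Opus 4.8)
The plan is to realize $F_{\alpha}$ as a Fourier multiplier and to remove its time dependence by a scaling symmetry before invoking a multiplier theorem. By definition $F_{\alpha}$ is the Fourier multiplier with symbol
\[
 m_{t}(\xi):=\exp\Big(t^{\frac1\alpha}|\xi|_{1}-\tfrac12\,t|\xi|^{\alpha}\Big),\qquad \xi\in\mathbb{R}^{n},\ t\ge 0.
\]
The first step is to notice that this symbol is a pure dilation of a fixed profile: the substitution $\eta=t^{\frac1\alpha}\xi$ gives $t^{\frac1\alpha}|\xi|_{1}=|\eta|_{1}$ and $t|\xi|^{\alpha}=|\eta|^{\alpha}$, so that $m_{t}(\xi)=M\big(t^{\frac1\alpha}\xi\big)$ with the $t$-independent profile $M(\xi):=\exp\big(|\xi|_{1}-\tfrac12|\xi|^{\alpha}\big)$. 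Since the multiplier norm $\|\cdot\|_{\mathcal{M}_{p}}$ introduced in Section 2 is invariant under the dilation $\xi\mapsto\lambda\xi$ of the symbol, one gets $\|m_{t}\|_{\mathcal{M}_{p}}=\|M\|_{\mathcal{M}_{p}}$ for every $t>0$ (while $F_{0}=\mathrm{Id}$). Thus it suffices to prove $M\in\mathcal{M}_{p}$ for $1<p<\infty$; the resulting bound is then automatically uniform in $t\ge 0$, which settles the uniformity assertion for free.

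Next I would verify that $M$ is an admissible multiplier, the decisive point being the competition between the growth $e^{|\xi|_{1}}$ and the decay $e^{-\frac12|\xi|^{\alpha}}$. Because $\alpha>1$ and $|\xi|_{1}\le\sqrt{n}\,|\xi|$, the exponent $|\xi|_{1}-\tfrac12|\xi|^{\alpha}$ tends to $-\infty$ as $|\xi|\to\infty$ and is continuous, hence bounded above; consequently $M\in L^{\infty}(\mathbb{R}^{n})$, which already yields $M\in\mathcal{M}_{2}$ and the $L^{2}$ bound. For $p\neq 2$ I would apply a H\"ormander--Mikhlin type theorem. Away from the coordinate hyperplanes $\{\xi_{i}=0\}$ the profile $M$ is smooth, and each derivative $\partial^{\beta}M$ equals $M$ times a finite combination of the factors $\mathrm{sgn}(\xi_{i})$ and $|\xi|^{\alpha-2}\xi_{i}$ and their products; the super-exponential factor $e^{-\frac12|\xi|^{\alpha}}$ dominates all such algebraic factors as well as the slower exponential $e^{|\xi|_{1}}$, so the scale-invariant bounds $|\xi|^{|\beta|}|\partial^{\beta}M(\xi)|\le C_{\beta}$ hold off the hyperplanes.

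The main obstacle is exactly that $|\xi|_{1}=\sum_{i}|\xi_{i}|$ fails to be differentiable across the coordinate hyperplanes, so $M$ is merely Lipschitz there and the classical H\"ormander--Mikhlin theorem does not apply verbatim. I would circumvent this with the Marcinkiewicz multiplier theorem, whose hypotheses involve only the mixed derivatives $\partial^{\beta}M$ with $\beta\in\{0,1\}^{n}$ integrated over dyadic rectangles: each such derivative is again $M$ times a bounded product of sign factors and smooth functions, the sign jumps occur on a set of measure zero and do not affect the integrals, and the rapid decay of $M$ makes every dyadic integral uniformly finite. An alternative route, giving the even stronger conclusion of boundedness on all $L^{p}$ with $1\le p\le\infty$, is to show directly that $K:=\mathcal{F}^{-1}M\in L^{1}(\mathbb{R}^{n})$, the corners of $M$ on the hyperplanes producing only integrable $|x_{i}|^{-2}$-type decay of $K$ in each coordinate direction, whence $\|F_{\alpha}f\|_{L^{p}}\le\|K\|_{L^{1}}\|f\|_{L^{p}}$ by Young's inequality. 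In either case the role of the standing assumption $\alpha>1$ is essential: it is precisely what lets the dissipation $e^{-\frac12 t\Lambda^{\alpha}}$ absorb the Gevrey operator $e^{t^{1/\alpha}\Lambda_{1}}$, in contrast to the limit case $\alpha=1$, which is handled separately with the rescaled operator $e^{\frac{1}{2n}t\Lambda_{1}}$.
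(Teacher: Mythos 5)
Your main argument is correct, but note first that the paper itself does not prove this lemma: it is quoted verbatim as Lemma 3.3 of \cite{BBT13}, so the relevant comparison is with the proof in that reference. Your route (reduce to $t=1$ by dilation invariance of the multiplier norm $\mathcal{M}_{p}$, then verify the Marcinkiewicz conditions for the fixed profile $M(\xi)=e^{|\xi|_{1}-\frac12|\xi|^{\alpha}}$) is genuinely different and viable. The cited proof instead splits frequency space into the $2^{n}$ octants: on the octant with sign pattern $\epsilon$ one has $e^{|\xi|_{1}}=e^{\epsilon\cdot\xi}$, so the operator becomes a finite sum of products of one-dimensional Riesz projections (Hilbert transforms) composed with the multipliers $e^{\epsilon\cdot\xi-\frac12|\xi|^{\alpha}}$, which are smooth off the origin and killed at infinity by the dissipation since $\alpha>1$, hence of Mikhlin type. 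This is exactly the mechanism the present paper re-uses for the bilinear operator $\mathcal{B}_{t}$ through the operators $K_{\pm1}$ and $Z_{t,\varsigma,\mu}$ in the proof of Lemma \ref{le3.6}, and it makes transparent why the statement is restricted to $1<p<\infty$ (Riesz projections fail at $p=1,\infty$). Your route buys a self-contained argument avoiding the octant operators; one point of care when checking its hypotheses is that the mixed derivatives $\partial^{\beta}M$, $\beta\in\{0,1\}^{n}$, contain not only products of bounded first-order factors but also terms like $\partial^{\beta}(|\xi|^{\alpha})\sim|\xi|^{\alpha-|\beta|}$, which blow up at the origin when $|\beta|\geq2$. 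The dyadic integrals are still uniformly bounded (a rectangle at scale $2^{-j}$ contributes $O(2^{-j\alpha})$), but it is this near-origin singularity, not the ``rapid decay of $M$,'' that needs checking there.

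By contrast, your proposed alternative route contains a genuine gap and its conclusion should be distrusted for $1<\alpha<2$, $n\geq2$. Integrating by parts twice in a single variable $\xi_{i}$ across the corner on $\{\xi_{i}=0\}$ only yields $|K(x)|\lesssim\min_{i}\left(1,|x_{i}|^{-2}\right)$, hence $|K(x)|\lesssim(1+|x|)^{-2}$, which is not integrable on $\mathbb{R}^{n}$ for $n\geq2$. To obtain the product decay $\prod_{i}(1+|x_{i}|)^{-2}$ one needs mixed derivatives of order up to $2n$, and for $\alpha<2$ these behave like $|\xi|^{\alpha-2n}$ near the origin, which is not locally integrable; the argument genuinely works only for $\alpha=2$, where the symbol factorizes as $\prod_{i}e^{|\xi_{i}|-\frac12\xi_{i}^{2}}$ and one-dimensional corners suffice. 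Consistently with this, the paper is careful never to claim that $F_{\alpha}$ maps $L^{\infty}$ to $L^{\infty}$: in Subsections 3.4 and 4.4 such operators are used only after localization in dyadic Fourier blocks, and the $p=\infty$ theory is developed separately for precisely this reason. So keep the Marcinkiewicz proof and drop the $L^{1}$-kernel claim.
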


\begin{proposition}\label{pro3.5}
Let $s\in \mathbb{R}$, $1<p<\infty$, $1\leq q,\rho_1\leq\infty$ and
$0<T\leq\infty$. Assume that $u_{0}\in
\dot{B}^{s}_{p,q}(\mathbb{R}^{n})$ and $f\in\widetilde{L}^{\rho_1}_{T}(e^{t^{\frac{1}{\alpha}}\Lambda_{1}}
\dot{B}^{s+\frac{\alpha}{\rho_{1}}-\alpha}_{p,q}(\mathbb{R}^{n}))$. Then \eqref{eq3.1} has
a unique solution $u\in\underset{\rho_1\leq
\rho\leq\infty}{\cap}\widetilde{L}^{\rho}_{T}(e^{t^{\frac{1}{\alpha}}\Lambda_{1}}\dot{B}^{s+\frac{\alpha}{\rho}}_{p,q}(\mathbb{R}^{n}))$. In addition, there exists a
constant $C>0$ depending only on $\alpha$ and $n$ such that for any $\rho_1\leq
\rho\leq\infty$, we have
\begin{equation}\label{eq3.21}
  \|u\|_{\widetilde{L}^{\rho}_{T}(e^{t^{\frac{1}{\alpha}}\Lambda_{1}}\dot{B}^{s+\frac{\alpha}{\rho}}_{p,q})}\leq
  C\big(\|u_{0}\|_{\dot{B}^{s}_{p,q}}+\|f\|_{\widetilde{L}^{\rho_1}_{T}(e^{t^{\frac{1}{\alpha}}\Lambda_{1}}\dot{B}^{s+\frac{\alpha}{\rho_{1}}-\alpha}_{p,q})}\big).
\end{equation}
\end{proposition}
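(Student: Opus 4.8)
The plan is to reduce the Gevrey-weighted estimate \eqref{eq3.21} to the already-established estimate \eqref{eq3.2} of Proposition \ref{pro3.1} by conjugating the heat-flow solution formula with the Gevrey operator $e^{t^{\frac{1}{\alpha}}\Lambda_{1}}$ and absorbing the resulting Fourier multipliers using Lemmas \ref{le3.3} and \ref{le3.4}. Concretely, set $U(t):=e^{t^{\frac{1}{\alpha}}\Lambda_{1}}u(t)$ and $G(t):=e^{t^{\frac{1}{\alpha}}\Lambda_{1}}f(t)$. By the Duhamel formula for \eqref{eq3.1}, applying $e^{t^{\frac{1}{\alpha}}\Lambda_{1}}$ gives
\begin{equation*}
  U(t)=e^{t^{\frac{1}{\alpha}}\Lambda_{1}-t\Lambda^{\alpha}}u_{0}
  +\int_{0}^{t}e^{t^{\frac{1}{\alpha}}\Lambda_{1}-(t-s)\Lambda^{\alpha}}f(s)\,ds.
\end{equation*}
By the hypothesis $f\in\widetilde{L}^{\rho_{1}}_{T}(e^{t^{\frac{1}{\alpha}}\Lambda_{1}}\dot{B}^{s+\frac{\alpha}{\rho_{1}}-\alpha}_{p,q})$ we have $G\in\widetilde{L}^{\rho_{1}}_{T}(\dot{B}^{s+\frac{\alpha}{\rho_{1}}-\alpha}_{p,q})$, and proving \eqref{eq3.21} is exactly proving that $U\in\widetilde{L}^{\rho}_{T}(\dot{B}^{s+\frac{\alpha}{\rho}}_{p,q})$ with the stated bound in terms of $\|u_{0}\|_{\dot{B}^{s}_{p,q}}$ and $\|G\|_{\widetilde{L}^{\rho_{1}}_{T}(\dot{B}^{s+\frac{\alpha}{\rho_{1}}-\alpha}_{p,q})}$.

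The key algebraic step is to split each exponential into a harmless Gevrey-dissipation piece times a genuine fractional-heat semigroup. For the linear term I would write
\begin{equation*}
  e^{t^{\frac{1}{\alpha}}\Lambda_{1}-t\Lambda^{\alpha}}
  =\Bigl(e^{t^{\frac{1}{\alpha}}\Lambda_{1}-\frac{1}{2}t\Lambda^{\alpha}}\Bigr)\,e^{-\frac{1}{2}t\Lambda^{\alpha}},
\end{equation*}
so that the first factor is the operator $F_{\alpha}$ of Lemma \ref{le3.4}, bounded on $L^{p}$ uniformly in $t\geq 0$ for $1<p<\infty$, and the second factor is an honest fractional-heat semigroup to which Proposition \ref{pro3.1} applies (after rescaling time, $e^{-\frac{1}{2}t\Lambda^{\alpha}}$ is the solution operator for the half-strength equation). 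Since $F_{\alpha}$ is a Fourier multiplier commuting with the Littlewood--Paley blocks $\Delta_{j}$, its uniform $L^{p}$ bound lets it pass through every $\|\Delta_{j}\cdot\|_{L^{p}}$ in the $\widetilde{L}^{\rho}_{T}(\dot B^{\cdot}_{p,q})$ norm at the cost of a fixed constant. For the Duhamel term the relevant exponent is $t^{\frac{1}{\alpha}}-(t-s)-$type, and after peeling off a half-dissipation factor $e^{-\frac{1}{2}(t-s)\Lambda^{\alpha}}$ the remaining multiplier is $e^{[t^{\frac{1}{\alpha}}-(t-s)^{\frac{1}{\alpha}}-s^{\frac{1}{\alpha}}]\Lambda_{1}}\cdot e^{s^{\frac{1}{\alpha}}\Lambda_{1}-\frac{1}{2}(t-s)\Lambda^{\alpha}}$; the first factor here is precisely the operator $E_{\alpha}$ of Lemma \ref{le3.3} (an $L^{1}$-kernel multiplier with norm bounded independently of $s,t$, because $t^{\frac{1}{\alpha}}\leq (t-s)^{\frac{1}{\alpha}}+s^{\frac{1}{\alpha}}$ by concavity of $r\mapsto r^{1/\alpha}$), and hence it too maps $L^{p}\to L^{p}$ uniformly by Young's inequality. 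The second factor is again an $F_{\alpha}$-type multiplier (now in the variable $s$) composed with half-dissipation, uniformly bounded on $L^{p}$.

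With both exponentials factored this way, I would insert these uniform multiplier bounds blockwise and conclude that
\begin{equation*}
  \|\Delta_{j}U\|_{L^{\rho}_{T}(L^{p})}\lesssim \|\Delta_{j}\tilde u\|_{L^{\rho}_{T}(L^{p})},
\end{equation*}
where $\tilde u$ is the solution of \eqref{eq3.1} with the half-strength dissipation, data $u_{0}$, and forcing $G$; then multiply by $2^{j(s+\alpha/\rho)}$ and take the $\ell^{q}$ norm to obtain \eqref{eq3.21} directly from the already-known estimate \eqref{eq3.2}. Uniqueness in the claimed space is inherited from the uniqueness statement of Proposition \ref{pro3.1}, since $u\mapsto e^{t^{\frac{1}{\alpha}}\Lambda_{1}}u$ is a bijection between the two solution classes. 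The main obstacle I anticipate is not any single estimate but keeping the time-dependence of the two multipliers correctly bookkept: one must verify that the decomposition of the Duhamel exponential produces exactly the combination $(t-s)^{\frac{1}{\alpha}}+s^{\frac{1}{\alpha}}-t^{\frac{1}{\alpha}}$ that Lemma \ref{le3.3} controls, and that the restriction $1<p<\infty$ (needed for Lemma \ref{le3.4}, which rests on a Mikhlin-type multiplier bound that fails at the endpoints) is used consistently — this is exactly why the analyticity statements in Theorems \ref{th1.1} and \ref{th1.2} exclude $p=1$.
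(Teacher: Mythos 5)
Your overall strategy is exactly the paper's: set $U(t)=e^{t^{\frac{1}{\alpha}}\Lambda_{1}}u(t)$, split each exponential in the Duhamel formula into a uniformly bounded Fourier multiplier times a genuine half-strength semigroup, and invoke Lemmas \ref{le3.3} and \ref{le3.4}; your treatment of the linear term coincides with the paper's. However, the factorization you write for the Duhamel multiplier is algebraically wrong, and the error sits precisely at the step you yourself flagged as the one needing verification. Multiplying your three factors
\begin{equation*}
e^{-\frac{1}{2}(t-s)\Lambda^{\alpha}}\cdot e^{[t^{\frac{1}{\alpha}}-(t-s)^{\frac{1}{\alpha}}-s^{\frac{1}{\alpha}}]\Lambda_{1}}\cdot e^{s^{\frac{1}{\alpha}}\Lambda_{1}-\frac{1}{2}(t-s)\Lambda^{\alpha}}
\end{equation*}
gives $e^{[t^{\frac{1}{\alpha}}-(t-s)^{\frac{1}{\alpha}}]\Lambda_{1}-(t-s)\Lambda^{\alpha}}$, not the operator $e^{t^{\frac{1}{\alpha}}\Lambda_{1}-(t-s)\Lambda^{\alpha}}$ appearing in the Duhamel integral: the growth factor $e^{(t-s)^{\frac{1}{\alpha}}\Lambda_{1}}$ has been dropped, and it is exactly this factor that Lemma \ref{le3.4} is designed to absorb. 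Moreover your ``second factor'' $e^{s^{\frac{1}{\alpha}}\Lambda_{1}-\frac{1}{2}(t-s)\Lambda^{\alpha}}$ is not an $F_{\alpha}$-type operator and is \emph{not} uniformly bounded on $L^{p}$: its symbol $e^{s^{\frac{1}{\alpha}}|\xi|_{1}-\frac{1}{2}(t-s)|\xi|^{\alpha}}$ pairs the Gevrey growth at time $s$ with the dissipation over the elapsed time $t-s$, and its supremum blows up as $s\uparrow t$ (the dissipation vanishes while the growth does not), which is fatal since the Duhamel integral runs up to $s=t$. The correct factorization, which is the one used in the paper, is
\begin{equation*}
e^{t^{\frac{1}{\alpha}}\Lambda_{1}-(t-s)\Lambda^{\alpha}}
=e^{-[(t-s)^{\frac{1}{\alpha}}+s^{\frac{1}{\alpha}}-t^{\frac{1}{\alpha}}]\Lambda_{1}}\cdot
e^{(t-s)^{\frac{1}{\alpha}}\Lambda_{1}-\frac{t-s}{2}\Lambda^{\alpha}}\cdot e^{-\frac{t-s}{2}\Lambda^{\alpha}}\cdot e^{s^{\frac{1}{\alpha}}\Lambda_{1}},
\end{equation*}
i.e.\ $E_{\alpha}$ (Lemma \ref{le3.3}) times $F_{\alpha}$ evaluated at the \emph{elapsed} time $t-s$ (Lemma \ref{le3.4}) times the half-dissipation, with the last factor absorbed into $f(s)$ to produce $G(s)$.

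A second, smaller slip is the claimed comparison $\|\Delta_{j}U\|_{L^{\rho}_{T}(L^{p})}\lesssim\|\Delta_{j}\tilde u\|_{L^{\rho}_{T}(L^{p})}$, where $\tilde u$ solves the half-strength equation with forcing $G$: the multipliers $E_{\alpha}$ and $F_{\alpha}$ depend on the integration variable $s$, so they cannot be pulled out of the Duhamel integral as a single operator acting on $\tilde u$; a time integral of multiplier-modified terms is not controlled by the corresponding term inside another solution formula (there could be cancellation in one integral and not the other). What the corrected factorization actually yields, blockwise via Lemma \ref{le2.5}, is
\begin{equation*}
\|\Delta_{j}U(t)\|_{L^{p}}\lesssim e^{-\kappa 2^{\alpha j}t}\|\Delta_{j}u_{0}\|_{L^{p}}
+\int_{0}^{t}e^{-\kappa 2^{\alpha j}(t-\tau)}\|\Delta_{j}e^{\tau^{\frac{1}{\alpha}}\Lambda_{1}}f(\tau)\|_{L^{p}}\,d\tau,
\end{equation*}
after which one applies Young's inequality in time, multiplies by $2^{j(s+\frac{\alpha}{\rho})}$ and takes the $\ell^{q}$ norm; this is the paper's route to \eqref{eq3.21}, and it replays the proof of \eqref{eq3.2} rather than citing its statement. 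With these two corrections your argument becomes the paper's proof; your remarks on uniqueness (inherited from Proposition \ref{pro3.1}) and on the role of $1<p<\infty$ in Lemma \ref{le3.4} are consistent with the paper.
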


\begin{proof}
Since Proposition \ref{pro3.1} has already ensured that \eqref{eq3.1} has a unique solution $u$, it suffices to prove that the inequality \eqref{eq3.23} holds. For this purpose, setting $U(t)=e^{t^{\frac{1}{\alpha}}\Lambda_{1}}u(t)$, then applying $\Delta_{j}e^{t^{\frac{1}{\alpha}}\Lambda_{1}}$ to \eqref{eq3.1} and taking $L^{p}$ norm to the resulting equality imply that
\begin{equation}\label{eq3.22}
   \|\Delta_{j}U(t)\|_{L^{p}}\leq \|e^{t^{\frac{1}{\alpha}}\Lambda_{1}-t\Lambda^{\alpha}}\Delta_{j}u_{0}\|_{L^{p}}
   +\|\int_{0}^{t}e^{t^{\frac{1}{\alpha}}\Lambda_{1}-(t-\tau)\Lambda^{\alpha}}\Delta_{j}f(\tau)d\tau\|_{L^{p}}.
\end{equation}
It follows from Lemmas \ref{le3.4} and \ref{le2.5} that there exists $\kappa>0$ such that
\begin{align}\label{eq3.23}
   \|e^{t^{\frac{1}{\alpha}}\Lambda_{1}-t\Lambda^{\alpha}}\Delta_{j}u_{0}\|_{L^{p}}
   &= \|e^{t^{\frac{1}{\alpha}}\Lambda_{1}-\frac{t}{2}\Lambda^{\alpha}}e^{-\frac{t}{2}\Lambda^{\alpha}}\Delta_{j}u_{0}\|_{L^{p}}\nonumber\\
   &\lesssim \|e^{-\frac{t}{2}\Lambda^{\alpha}}\Delta_{j}u_{0}\|_{L^{p}}\lesssim e^{-\kappa2^{\alpha j}t}\|\Delta_{j}u_{0}\|_{L^{p}}.
\end{align}
Notice the fact that we can rewrite
$$
  e^{t^{\frac{1}{\alpha}}\Lambda_{1}-(t-\tau)\Lambda^{\alpha}}=e^{-[(t-\tau)^{\frac{1}{\alpha}}
  +\tau^{\frac{1}{\alpha}}-t^{\frac{1}{\alpha}}]\Lambda_{1}
  +[(t-\tau)^{\frac{1}{\alpha}}\Lambda_{1}-\frac{t-\tau}{2}\Lambda^{\alpha}]-\frac{t-\tau}{2}\Lambda^{\alpha}}e^{\tau^{\frac{1}{\alpha}}\Lambda_{1}}.
$$
It follows from Lemmas \ref{le3.3}  and \ref{le3.4} that
\begin{align}\label{eq3.24}
  \|\int_{0}^{t}e^{t^{\frac{1}{\alpha}}\Lambda_{1}-(t-\tau)\Lambda^{\alpha}}\Delta_{j}f(\tau)d\tau\|_{L^{p}}
  &\lesssim\int_{0}^{t}\|e^{-\frac{t-\tau}{2}\Lambda^{\alpha}}\Delta_{j}e^{\tau^{\frac{1}{\alpha}}\Lambda_{1}}f(\tau)\|_{L^{p}}d\tau\nonumber\\
  &\lesssim\int_{0}^{t}e^{-\kappa(t-\tau)2^{\alpha j}}\|\Delta_{j}e^{\tau^{\frac{1}{\alpha}}\Lambda_{1}}f(\tau)\|_{L^{p}}d\tau.
\end{align}
Combining \eqref{eq3.23} and \eqref{eq3.24}, we see that
\begin{align}\label{eq3.25}
   \|\Delta_{j}U(t)\|_{L^{p}}
   \lesssim e^{-\kappa2^{\alpha j}t}\|\Delta_{j}u_{0}\|_{L^{p}}+\int_{0}^{t}e^{-\kappa(t-\tau)2^{\alpha j}}\|\Delta_{j}e^{\tau^{\frac{1}{\alpha}}\Lambda_{1}}f(\tau)\|_{L^{p}}d\tau.
\end{align}
Taking $L^{\rho}([0,T])$ norm to \eqref{eq3.25} and using Young's inequality,
\begin{align}\label{eq3.26}
   \|\Delta_{j}U(t)\|_{L^{\rho}_{T}(L^{p})}
   \lesssim \left(\frac{1-e^{-\kappa\rho2^{\alpha j} T}}{\kappa \rho2^{\alpha j}}\right)^{\frac{1}{\rho}}\|\Delta_{j}u_{0}\|_{L^{p}}
   +\left(\frac{1-e^{-\kappa \rho_{2}2^{\alpha j}T}}{\kappa \rho_{2}2^{\alpha j}}\right)^{\frac{1}{\rho_{2}}}\|\Delta_{j}e^{t^{\frac{1}{\alpha}}\Lambda_{1}}f(\tau)\|_{L^{\rho_{1}}_{T}(L^{p})},
\end{align}
where $\frac{1}{\rho}+1=\frac{1}{\rho_2}+\frac{1}{\rho_{1}}$. Finally, multiplying $2^{(s+\frac{\alpha}{\rho})j}$ and taking the $l^{q}$ norm to \eqref{eq3.26}, we conclude that
\begin{align*}
  \|U\|_{\widetilde{L}^{\rho}_{T}(\dot{B}^{s+\frac{\alpha}{\rho}}_{p,q})}
  &\lesssim
  \left[\sum_{j\in\mathbb{Z}}\left(\frac{1-e^{-\kappa\rho2^{\alpha j}T}}{\kappa \rho}\right)^{\frac{q}{\rho}}(2^{sj}\|\Delta_{j}u_{0}\|_{L^{p}})^{q}\right]^{\frac{1}{q}}\nonumber\\
  &+\left[\sum_{j\in\mathbb{Z}}\left(\frac{1-e^{-\kappa\rho_{2} 2^{\alpha j}T}}{\kappa \rho_{2}}\right)^{\frac{q}{\rho_{2}}}\left(2^{(s+\frac{\alpha}{\rho_{1}}-\alpha)}
  \|\Delta_{j}e^{t^{\frac{1}{\alpha}}\Lambda_{1}}f\|_{L^{\rho_{1}}_{T}(L^{p})}\right)^{q}\right]^{\frac{1}{q}}\nonumber\\
  &\lesssim\|u_{0}\|_{\dot{B}^{s}_{p,q}}+\|f\|_{\widetilde{L}^{\rho_1}(0,T;
  e^{t^{\frac{1}{\alpha}}\Lambda_{1}}\dot{B}^{s+\frac{\alpha}{\rho_1}-\alpha}_{p,q})},
\end{align*}
which leads to \eqref{eq3.21} .
\end{proof}

\medskip

We also need to establish the corresponding result as Lemma \ref{le3.2} in terms of the operator $e^{t^{\frac{1}{\alpha}}\Lambda_{1}}$.

\begin{lemma}\label{le3.6} Let $s>0$, $1<p<\infty$, $1\leq q, \rho, \rho_{1}, \rho_{2}\leq \infty$ with $\frac{1}{\rho}=\frac{1}{\rho_{1}}+\frac{1}{\rho_{2}}$. Then for any $\varepsilon>0$,  $0<T\leq\infty$, we have
\begin{align}\label{eq3.27}
     \|u\nabla(-\Delta)^{-1}v+v\nabla(-\Delta)^{-1}u\|_{\widetilde{L}^{\rho}_{T}(e^{t^{\frac{1}{\alpha}}\Lambda_{1}}\dot{B}^{s}_{p,q})}&\lesssim
     \|u\|_{\widetilde{L}^{\rho_{1}}_{T}(e^{t^{\frac{1}{\alpha}}\Lambda_{1}}\dot{B}^{s+\varepsilon}_{p,q})}
     \|v\|_{\widetilde{L}^{\rho_{2}}_{T}(e^{t^{\frac{1}{\alpha}}\Lambda_{1}}\dot{B}^{-1+\frac{n}{p}-\varepsilon}_{p,q})}\nonumber\\
     &+\|u\|_{\widetilde{L}^{\rho_{2}}_{T}(e^{t^{\frac{1}{\alpha}}\Lambda_{1}}\dot{B}^{-1+\frac{n}{p}-\varepsilon}_{p,q})}
     \|v\|_{\widetilde{L}^{\rho_{1}}_{T}(e^{t^{\frac{1}{\alpha}}\Lambda_{1}}\dot{B}^{s+\varepsilon}_{p,q})}.
\end{align}
Moreover, if we choose $\varepsilon=0$, then \eqref{eq3.27} also holds for $q=1$.
\end{lemma}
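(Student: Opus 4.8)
The plan is to deduce \eqref{eq3.27} from the already-proved estimate \eqref{eq3.4} by moving the Gevrey multiplier $e^{t^{\frac{1}{\alpha}}\Lambda_{1}}$ off the product and onto each factor separately. Introduce $U(t):=e^{t^{\frac{1}{\alpha}}\Lambda_{1}}u(t)$ and $V(t):=e^{t^{\frac{1}{\alpha}}\Lambda_{1}}v(t)$. By the convention $\|f\|_{\mathcal{T}X}=\|\mathcal{T}f\|_{X}$, the two $\widetilde{L}^{\rho_{i}}_{T}(e^{t^{\frac{1}{\alpha}}\Lambda_{1}}\dot{B}^{\cdot}_{p,q})$ norms on the right-hand side of \eqref{eq3.27} are nothing but the ordinary time-space Besov norms of $U$ and $V$, while the left-hand side equals $\|e^{t^{\frac{1}{\alpha}}\Lambda_{1}}(u\nabla(-\Delta)^{-1}v+v\nabla(-\Delta)^{-1}u)\|_{\widetilde{L}^{\rho}_{T}(\dot{B}^{s}_{p,q})}$. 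Writing $u=e^{-t^{\frac{1}{\alpha}}\Lambda_{1}}U$, $v=e^{-t^{\frac{1}{\alpha}}\Lambda_{1}}V$ and using that the homogeneous Fourier multiplier $\nabla(-\Delta)^{-1}$ commutes with $e^{-t^{\frac{1}{\alpha}}\Lambda_{1}}$, each summand turns into a value of the bilinear operator $\mathcal{B}_{t}$ from the Introduction, e.g. $e^{t^{\frac{1}{\alpha}}\Lambda_{1}}\big(u\,\nabla(-\Delta)^{-1}v\big)=\mathcal{B}_{t}\big(U,\nabla(-\Delta)^{-1}V\big)$.

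The workhorse is the uniform-in-$t$ boundedness of $\mathcal{B}_{t}$ on frequency-localized pieces. Its symbol $e^{t^{\frac{1}{\alpha}}(|\xi+\eta|_{1}-|\xi|_{1}-|\eta|_{1})}$ has nonpositive exponent for every $t\geq0$, by the subadditivity $|\xi+\eta|_{1}\leq|\xi|_{1}+|\eta|_{1}$ of the $\ell^{1}$-norm; coordinate by coordinate this is the $2$-variable Fourier transform of an $L^{1}$ kernel whose mass stays bounded as $t$ varies, so tensorizing gives $\mathcal{B}_{t}$ an $L^{1}$ kernel with $t$-uniform mass. Following \cite{L02} and \cite{BBT13}, this yields $\|\Delta_{j}\mathcal{B}_{t}(a,b)\|_{L^{p}}\lesssim\|a\|_{L^{p_{1}}}\|b\|_{L^{p_{2}}}$ for $\frac{1}{p}=\frac{1}{p_{1}}+\frac{1}{p_{2}}$, with constant independent of $t$ and $j$. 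In other words, $\mathcal{B}_{t}$ obeys exactly the same H\"{o}lder-type dyadic block bounds that the ordinary product enjoys.

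With this in hand I would rerun Bony's decomposition verbatim as in the proof of Lemma \ref{le3.2}: split the product into $I_{1}+I_{2}+I_{3}$, and decompose $I_{3}=K_{1}+K_{2}+K_{3}$ through the very same algebraic identity built on the structure of $(-\Delta)^{-1}$. In each block computation \eqref{eq3.6}--\eqref{eq3.14} one simply replaces the plain product of Littlewood-Paley pieces by the corresponding value of $\mathcal{B}_{t}$ and invokes the $t$-uniform block bound above in place of H\"{o}lder's inequality. Since Lemmas \ref{le2.3} and \ref{le2.4} operate purely on the frequency variables that $\mathcal{B}_{t}$ leaves intact (up to a harmless $t$-independent constant), the identical summations over $j'$ and $k$ carry through with $U,V$ substituted for $u,v$; integrating in time with $\frac{1}{\rho}=\frac{1}{\rho_{1}}+\frac{1}{\rho_{2}}$ and taking the $\ell^{q}$ sum then produces \eqref{eq3.27}, the $\varepsilon=0$, $q=1$ endpoint being handled by the same substitution.

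The hard part is the second step: securing the uniform-in-$t$ boundedness of $\mathcal{B}_{t}$ on the localized pieces, and in particular checking that composing with the degree $-1$ homogeneous multiplier $\nabla(-\Delta)^{-1}$ (resolved on dyadic annuli by Lemma \ref{le2.4}) does not spoil the $L^{1}$-kernel bound. This is precisely the point at which the restriction $1<p<\infty$ enters, mirroring the hypothesis of Lemma \ref{le3.4}; the endpoints $p\in\{1,\infty\}$ lie outside the range of the multiplier argument and are treated separately. Once the boundedness of $\mathcal{B}_{t}$ is in place, the remainder is a routine transcription of Lemma \ref{le3.2} requiring no genuinely new estimate.
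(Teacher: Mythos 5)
Your overall architecture is the same as the paper's: pass to $U=e^{t^{1/\alpha}\Lambda_{1}}u$, $V=e^{t^{1/\alpha}\Lambda_{1}}v$, reduce everything to the bilinear operator $\mathcal{B}_{t}(f,g)=e^{t^{1/\alpha}\Lambda_{1}}\big(e^{-t^{1/\alpha}\Lambda_{1}}f\,e^{-t^{1/\alpha}\Lambda_{1}}g\big)$, prove a H\"older-type bound for $\mathcal{B}_{t}$ uniform in $t$, and then rerun the proof of Lemma \ref{le3.2} with $U,V$ in place of $u,v$; that last transcription is indeed routine and is exactly what the paper does. The gap is in your justification of the crucial bilinear bound. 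You claim that, coordinate by coordinate, the symbol $e^{t^{1/\alpha}(|\xi+\eta|_{1}-|\xi|_{1}-|\eta|_{1})}$ is the Fourier transform of an $L^{1}$ kernel with $t$-uniform mass, so that $\mathcal{B}_{t}$ itself carries a $t$-uniform $L^{1}(\mathbb{R}^{2n})$ kernel. This is false, and your own closing paragraph signals the inconsistency: convolution with a uniformly bounded $L^{1}$ kernel is bounded on \emph{every} $L^{p}$, $1\le p\le\infty$, so your argument would prove the lemma at $p=1,\infty$ as well, whereas you correctly anticipate that the restriction $1<p<\infty$ must enter through the multiplier argument. Concretely, in one dimension the symbol equals $1$ on $\{\xi\eta\ge0\}$ and $e^{-2t^{1/\alpha}\min(|\xi|,|\eta|)}$ on the mixed quadrants; as $t\to\infty$ it converges boundedly to $\chi_{\{\xi\eta\ge0\}}$, whose multiplier operator is $\tfrac12\big(I\otimes I-H\otimes H\big)$, a double Hilbert transform, unbounded on $L^{1}$ and $L^{\infty}$. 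If the kernels had uniformly bounded $L^{1}$ mass, a weak-$*$ compactness argument would make $\chi_{\{\xi\eta\ge0\}}$ the Fourier--Stieltjes transform of a finite measure, a contradiction. The same obstruction is why the paper, when it later treats $p=\infty$, must remark that these operators do not map $L^{\infty}$ to $L^{\infty}$ and fall back on dyadic localization.

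What actually closes the step (and is the paper's proof) is to split the $(\xi,\eta)$-space \emph{first} according to the signs of $\xi_{i}$, $\eta_{i}$ and $\xi_{i}+\eta_{i}$. On each such region the exponent collapses, coordinate by coordinate, to one of $0$, $-2|\xi_{i}|$, $-2|\eta_{i}|$, $-2|\xi_{i}+\eta_{i}|$, and the factors $e^{-2t^{1/\alpha}|\cdot|}$ are genuine Poisson-type multipliers with $t$-uniform $L^{1}$ kernels; the price of the sign decomposition is the one-dimensional half-line restrictions $K_{\pm1}$, i.e.\ Riesz projections built from the Hilbert transform, which are bounded on $L^{p}$ only for $1<p<\infty$ (uniformly in $t$, trivially, since they do not depend on $t$). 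This yields the identity \eqref{eq3.31} and the bound $\|\mathcal{B}_{t}(f,g)\|_{L^{p}}\lesssim\|f\|_{L^{p_{1}}}\|g\|_{L^{p_{2}}}$, $\tfrac1{p_{1}}+\tfrac1{p_{2}}=\tfrac1p$, precisely in the range $1<p<\infty$. Note also that the restriction $1<p<\infty$ has nothing to do with composing with $\nabla(-\Delta)^{-1}$, which is harmless on dyadic annuli by Lemma \ref{le2.4} for every $p$; it comes entirely from the hidden Hilbert transforms. With the bilinear bound established this way, the rest of your plan (Bony decomposition, the splitting $I_{3}=K_{1}+K_{2}+K_{3}$, and the block-by-block summations) goes through exactly as you describe.
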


\begin{proof} Set $U(t)=e^{t^{\frac{1}{\alpha}}\Lambda_{1}}u(t)$, $V(t)=e^{t^{\frac{1}{\alpha}}\Lambda_{1}}v(t)$. Then, as  Lemma \ref{le3.2}, we use  Bony's paraproduct decomposition to get
\begin{align}\label{eq3.28}
    e^{t^{\frac{1}{\alpha}}\Lambda_{1}}\left(u\nabla(-\Delta)^{-1}v\!+\!v\nabla(-\Delta)^{-1}u\right)&=e^{t^{\frac{1}{\alpha}}\Lambda_{1}}
   \big(e^{-t^{\frac{1}{\alpha}}\Lambda_{1}}Ue^{-t^{\frac{1}{\alpha}}\Lambda_{1}}\nabla(-\Delta)^{-1}V
    \!+\!e^{-t^{\frac{1}{\alpha}}\Lambda_{1}}Ve^{-t^{\frac{1}{\alpha}}\Lambda_{1}}\nabla(-\Delta)^{-1}U\big)\nonumber\\
    & :=J_{1}+J_{2}+J_{3},
\end{align}
where
\begin{align*}
    J_{1}:&=e^{t^{\frac{1}{\alpha}}\Lambda_{1}}\sum_{j'\in\mathbb{Z}}e^{-t^{\frac{1}{\alpha}}\Lambda_{1}}\Delta_{j'}U
    e^{-t^{\frac{1}{\alpha}}\Lambda_{1}}\nabla(-\Delta)^{-1}S_{j'-1}V+e^{-t^{\frac{1}{\alpha}}\Lambda_{1}}\Delta_{j'}V
    e^{-t^{\frac{1}{\alpha}}\Lambda_{1}}\nabla(-\Delta)^{-1}S_{j'-1}U;\\
    J_{2}:&=e^{t^{\frac{1}{\alpha}}\Lambda_{1}}\sum_{j'\in\mathbb{Z}}e^{-t^{\frac{1}{\alpha}}\Lambda_{1}}S_{j'-1}U
    e^{-t^{\frac{1}{\alpha}}\Lambda_{1}}\nabla(-\Delta)^{-1}\Delta_{j'}V+e^{-t^{\frac{1}{\alpha}}\Lambda_{1}}S_{j'-1}V
    e^{-t^{\frac{1}{\alpha}}\Lambda_{1}}\nabla(-\Delta)^{-1}\Delta_{j'}U;\\
    J_{3}:&=e^{t^{\frac{1}{\alpha}}\Lambda_{1}}\sum_{j'\in\mathbb{Z}}\sum_{|j'-j''|\leq1}e^{-t^{\frac{1}{\alpha}}\Lambda_{1}}
    \Delta_{j'}Ue^{-t^{\frac{1}{\alpha}}\Lambda_{1}}\nabla(-\Delta)^{-1}\Delta_{j''}V+
    e^{-t^{\frac{1}{\alpha}}\Lambda_{1}}\Delta_{j'}Ve^{-t^{\frac{1}{\alpha}}\Lambda_{1}}\nabla(-\Delta)^{-1}\Delta_{j''}U.
\end{align*}
To estimate the terms  $J_{i}$ ($i=1,2,3$), we use an idea as in \cite{L00} and \cite{BBT12}, and consider the following bilinear operator $\mathcal{B}_{t}(f,g)$ of the form
\begin{align}\label{eq3.29}
  \mathcal{B}_{t}(f,g):&=e^{t^{\frac{1}{\alpha}}\Lambda_{1}}(e^{-t^{\frac{1}{\alpha}}\Lambda_{1}}fe^{-t^{\frac{1}{\alpha}}\Lambda_{1}}g)\nonumber\\
  &=\frac{1}{(2\pi)^{n}}\int_{\mathbb{R}^{n}}\int_{\mathbb{R}^{n}}
  e^{ix\cdot(\xi+\eta)}e^{t^{\frac{1}{\alpha}}(|\xi+\eta|_{1}-|\xi|_{1}-|\eta|_{1})}\hat{f}(\xi)\hat{g}(\eta)d\xi d\eta.
\end{align}
Note that we can split the domain of integration into sub-domains, depending on the sign of $\xi_{j}$, of $\eta_{j}$ and of $\xi_{j}+\eta_{j}$. Indeed, for $\varsigma=(\varsigma_{1},\cdots,\varsigma_{n})$, $\mu=(\mu_{1}, \cdots, \mu_{n})$, $\nu=(\nu_{1}, \cdots, \nu_{n})\in\mathbb{R}^{n}$ such that  $\varsigma_{i}$,
$\mu_{i}$, $\nu_{i}\in\{-1,1\}$, we denote
\begin{align*}
  &D_{\varsigma}:=\{\eta:\ \  \varsigma_{i}\eta_{i}\geq 0, \ \  i=1,2, \cdots, n\};\\
  &D_{\mu}:=\{\xi:\ \ \ \mu_{i}\xi_{i}\geq 0, \ \   i=1,2, \cdots, n\};\\
  &D_{\nu}:=\{\xi+\eta: \ \  \nu_{i}(\xi_{i}+\eta_{i})\geq 0, \ \  i=1,2, \cdots, n\}.
\end{align*}
Let $\chi_{D}$ be the characteristic function on the domain $D$. Then we can rewrite $\mathcal{B}_{t}(f,g)$  as
\begin{align*}
  \mathcal{B}_{t}(f,g)=\frac{1}{(2\pi)^{n}}\int_{\mathbb{R}^{n}}\int_{\mathbb{R}^{n}}
  e^{ix\cdot(\xi+\eta)}\chi_{D_{\nu}}e^{t^{\frac{1}{\alpha}}(|\xi+\eta|_{1}-|\xi|_{1}-|\eta|_{1})}\chi_{D_{\mu}}\hat{f}(\xi)\chi_{D_{\varsigma}}\hat{g}(\eta)d\xi d\eta.
\end{align*}
By this observation, we introduce the monodimensional  operators:
\begin{align*}
  K_{1}f:=\frac{1}{2\pi}\int_{0}^{+\infty}e^{ix\xi}\hat{f}(\xi)d\xi, \ \ \
  K_{-1}f:=\frac{1}{2\pi}\int_{-\infty}^{0}e^{ix\xi}\hat{f}(\xi)d\xi,
\end{align*}
and
\begin{align*}
  L_{t,\varepsilon_{1},\varepsilon_{2}}f:=f \ \ \ \text{if} \ \ \varepsilon_{1}\varepsilon_{2}=1,\ \ \
  L_{t,\varepsilon_{1},\varepsilon_{2}}f:=\frac{1}{2\pi}\int_{-\infty}^{+\infty}e^{ix\xi}e^{-2t^{\frac{1}{\alpha}}|\xi|_{1}}\hat{f}(\xi)d\xi
  \ \ \ \text{if} \ \ \varepsilon_{1}\varepsilon_{2}=-1.
\end{align*}
Moreover, for $t>0$, we define
the operators
\begin{align}\label{eq3.30}
  Z_{t,\varsigma,\mu}:=K_{\mu_{1}}L_{t,\varsigma_{1},\mu_{1}}\otimes\ldots\otimes K_{\mu_{n}}L_{t,\varsigma_{n},\mu_{n}}.
\end{align}
We mention here that the above tensor product \eqref{eq3.30} means that the $j-$th operator in the tensor product acts on
the $j-$th variable of the function $f(x_{1}, \ldots, x_{n})$. Then an elementary calculation yields the following
identity:
\begin{align}\label{eq3.31}
  \mathcal{B}_{t}(f,g)=\sum_{\varsigma,\mu,\nu\in\{-1,1\}^{n\times 3}}K_{\varsigma_{1}}\otimes\ldots\otimes K_{\varsigma_{n}}(Z_{t,\varsigma,\mu}fZ_{t,\varsigma,\nu}g).
\end{align}
Noticing that for $\xi+\eta\in D_{\nu}$, $\xi\in D_{\mu}$ and $\eta\in D_{\varsigma}$, $e^{t^{\frac{1}{\alpha}}(|\xi+\eta|_{1}-|\xi|_{1}-|\eta|_{1})}$ must belong to the following set:
\begin{equation*}
   \mathbb{E}:=\{1, e^{-2t^{\frac{1}{\alpha}}|\xi_{i}+\eta_{i}|_{1}}, e^{-2t^{\frac{1}{\alpha}}|\xi_{i}|_{1}}, e^{-2t^{\frac{1}{\alpha}}|\eta_{i}|_{1}}, \ \ i=1,2,\cdots, n \}.
\end{equation*}
Moreover, it is clear that $\chi_{D_{\varsigma}}$, $\chi_{D_{\mu}}$, $\chi_{D_{\nu}}\in\mathcal{M}_{p}$, and every element in $\mathbb{E}$ are the Fourier multipliers on $L^{p}(\mathbb{R}^{n})$ for $1<p<\infty$, which yield that the operators $K_{\varsigma}$ and $Z_{t,\varsigma, \mu}$ defined above are combinations of the identity operator
and of the Fourier multipliers on $L^{p}(\mathbb{R}^{n})$ (including Hilbert transform). Hence, the operators $K_{\varsigma}$ and $Z_{t,\varsigma,\mu}$  are bounded linear operators on $L^{p}(\mathbb{R}^{n})$ for $1<p<\infty$, and the corresponding operator norm of $Z_{t,\varsigma,\mu}$ is
bounded independent of $t\geq 0$. Moreover, for $1<p, p_{1}, p_{2}<\infty$,
\begin{equation*}
   \|\mathcal{B}_{t}(f,g)\|_{L^{p}}\lesssim \|Z_{t,\varsigma,\mu}fZ_{t,\varsigma,\nu}g\|_{L^{p}}\lesssim\|f\|_{L^{p_{1}}}\|g\|_{L^{p_{2}}}\ \ \ \text{with}\ \ \ \frac{1}{p_{1}}+\frac{1}{p_{2}}=\frac{1}{p}.
\end{equation*}
Since the nice boundedness property of  the bilinear operator $\mathcal{B}_{t}(f,g)$, we can follow  the proof of Lemma \ref{le3.2} to complete the proof of Lemma \ref{le3.4}. Indeed, we take the first term of $J_{1}$ as an example:
\begin{align*}
  \|\Delta_{j}e^{t^{\frac{1}{\alpha}}\Lambda_{1}}&\sum_{j'\in\mathbb{Z}}e^{-t^{\frac{1}{\alpha}}\Lambda_{1}}\Delta_{j'}U
    e^{-t^{\frac{1}{\alpha}}\Lambda_{1}}\nabla(-\Delta)^{-1}S_{j'-1}V\|_{L^{\rho}_{T}(L^{p})}\nonumber\\
   &= \|\Delta_{j}\sum_{j'\in\mathbb{Z}}\mathcal{B}_{t}(\Delta_{j'}U, \nabla(-\Delta)^{-1}S_{j'-1}V)\|_{L^{\rho}_{T}(L^{p})}\nonumber\\
   & \lesssim \sum_{|j'-j|\leq 4}\big\|K_{\varsigma_{1}}\otimes\ldots\otimes K_{\varsigma_{n}}(Z_{t,\varsigma,\mu}\Delta_{j'}UZ_{t,\varsigma,\nu}\nabla(-\Delta)^{-1}S_{j'-1}V)\big\|_{L^{\rho}_{T}(L^{p})}\nonumber\\
   & \lesssim \sum_{|j'-j|\leq 4}\big\|Z_{t,\varsigma,\mu}\Delta_{j'}U\|_{L^{\rho_{1}}_{T}(L^{p})}
   \|Z_{t,\varsigma,\nu}\nabla(-\Delta)^{-1}S_{j'-1}V\big\|_{L^{\rho_{2}}_{T}(L^{\infty})}\nonumber\\
    & \lesssim \sum_{|j'-j|\leq 4}\|Z_{t,\varsigma,\mu}\Delta_{j'}U\|_{L^{\rho_{1}}_{T}(L^{p})}\sum_{k\leq j'-2}2^{(-1+\frac{n}{p})k}\|Z_{t,\varsigma,\nu}\nabla(-\Delta)^{-1}\Delta_{k}V\|_{L^{\rho_{2}}_{T}(L^{p})}\nonumber\\
     &\lesssim \sum_{|j'-j|\leq 4}\|\Delta_{j'}U\|_{L^{\rho_{1}}_{T}(L^{p})}\sum_{k\leq j'-2}2^{\varepsilon k}2^{(-1+\frac{n}{p}-\varepsilon)k}\|\Delta_{k}V\|_{L^{\rho_{2}}_{T}(L^{p})}\nonumber\\
  &\lesssim \sum_{|j'-j|\leq 4}2^{-sj'}2^{(s+\varepsilon)j'}\|\Delta_{j'}U\|_{L^{\rho_{1}}_{T}(L^{p})}\|V\|_{\widetilde{L}^{\rho_{2}}_{T}(\dot{B}^{-1+\frac{n}{p}-\varepsilon}_{p,q})}.
\end{align*}
The other terms can be established analogously, thus we get the desired estimate \eqref{eq3.27}.
\end{proof}

\medskip

Combining Proposition \ref{pro3.5} and Lemma \ref{le3.6},  returning to the mapping $\eqref{eq3.16}$, we obtain
\begin{align}\label{eq3.32}
    \|\mathbb{F}(u)\|_{\widetilde{L}^{\rho_{1}}_{T}(e^{t^{\frac{1}{\alpha}}\Lambda_{1}}\dot{B}^{s_{1}}_{p,q})
    \cap\widetilde{L}^{\rho_{2}}_{T}(e^{t^{\frac{1}{\alpha}}\Lambda_{1}}\dot{B}^{s_{2}}_{p,q})}
    &\lesssim  \|e^{t^{\frac{1}{\alpha}}\Lambda_{1}-t\Lambda^{\alpha}}u_{0}\|_{\widetilde{L}^{\rho_{1}}_{T}(\dot{B}^{s_{1}}_{p,q})
    \cap\widetilde{L}^{\rho_{2}}_{T}(\dot{B}^{s_{2}}_{p,q})}\nonumber\\
    &\ \ \ +\|u\nabla(-\Delta)^{-1}u\|_{\widetilde{L}^{\rho_{1}}_{T}(e^{t^{\frac{1}{\alpha}}\Lambda_{1}}\dot{B}^{s_{1}}_{p,q})
    \cap\widetilde{L}^{\rho_{2}}_{T}(e^{t^{\frac{1}{\alpha}}\Lambda_{1}}\dot{B}^{s_{2}}_{p,q})}\nonumber\\
    &\lesssim  \|e^{-\frac{t}{2}\Lambda^{\alpha}}u_{0}\|_{\widetilde{L}^{\rho_{1}}_{T}(\dot{B}^{s_{1}}_{p,q})
    \cap\widetilde{L}^{\rho_{2}}_{T}(\dot{B}^{s_{2}}_{p,q})}\nonumber\\
    &\ \ \ +\|u\|_{\widetilde{L}^{\rho_{1}}_{T}(e^{t^{\frac{1}{\alpha}}\Lambda_{1}}\dot{B}^{s_{1}}_{p,q})
    \cap\widetilde{L}^{\rho_{2}}_{T}(e^{t^{\frac{1}{\alpha}}\Lambda_{1}}\dot{B}^{s_{2}}_{p,q})}^{2}.
\end{align}
Based on the above estimate \eqref{eq3.32},  by applying the standard contraction mapping argument, we complete the proof, as desired.

\subsection{The case $1<\alpha<2$ and $p=\infty$: Well-posedness}

In this subsection, we focus on the limit case  $p=\infty$. We first aim at establishing the following result.

\begin{lemma}\label{le3.7}  For $1\leq \alpha<2$, we have
\begin{align}\label{eq3.33}
     \|u\nabla(-\Delta)^{-1}v+v\nabla(-\Delta)^{-1}u\|_{\widetilde{L}^{1}_{t}(\dot{B}^{1-\alpha}_{\infty,1})}&\lesssim
     \|u\|_{\widetilde{L}^{\infty}_{t}(\dot{B}^{-\alpha}_{\infty,1})}
     \|v\|_{\widetilde{L}^{1}_{t}(\dot{B}^{0}_{\infty,1})}+\|u\|_{\widetilde{L}^{1}_{t}(\dot{B}^{0}_{\infty,1})}
     \|v\|_{\widetilde{L}^{\infty}_{t}(\dot{B}^{-\alpha}_{\infty,1})}.
\end{align}
\end{lemma}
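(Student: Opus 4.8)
The plan is to mirror the proof of Lemma \ref{le3.2}, decomposing the symmetric product by Bony's paraproduct formula as $u\nabla(-\Delta)^{-1}v+v\nabla(-\Delta)^{-1}u=I_{1}+I_{2}+I_{3}$, where $I_{1}$ collects the terms whose $\nabla(-\Delta)^{-1}$ falls on the low-frequency ($S_{j'-1}$) factor, $I_{2}$ those whose $\nabla(-\Delta)^{-1}$ falls on the high-frequency ($\Delta_{j'}$) factor, and $I_{3}$ the high-high remainder. The essential difference from Lemma \ref{le3.2} is that at $p=\infty$ one cannot invoke the global continuity of $\nabla(-\Delta)^{-1}$ from Lemma \ref{le2.4} (the index condition $s+1<n/p=0$ fails at the relevant regularities), so every appearance of $\nabla(-\Delta)^{-1}$ or $(-\Delta)^{-1}$ must be estimated block by block through the Bernstein inequality of Lemma \ref{le2.3}, using that these are homogeneous Fourier multipliers of degree $-1$ and $-2$; thus $\|\nabla(-\Delta)^{-1}\Delta_{k}w\|_{L^{\infty}}\lesssim 2^{-k}\|\Delta_{k}w\|_{L^{\infty}}$ and $\|(-\Delta)^{-1}\Delta_{k}w\|_{L^{\infty}}\lesssim 2^{-2k}\|\Delta_{k}w\|_{L^{\infty}}$.

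For $I_{1}$ and $I_{2}$ I would apply H\"older in time together with these block estimates, the decisive point being the correct pairing of the two regularities. In $I_{1}$ the operator $\nabla(-\Delta)^{-1}$ acts on a full low-frequency sum $S_{j'-1}$, which amplifies low frequencies by $2^{-k}$; to beat this I would always place the $\dot{B}^{-\alpha}_{\infty,1}$ factor under $\nabla(-\Delta)^{-1}$, so that $\|\nabla(-\Delta)^{-1}S_{j'-1}w\|_{L^{\infty}}\lesssim\sum_{k\le j'-2}2^{(\alpha-1)k}\|\Delta_{k}w\|_{L^{\infty}}\lesssim 2^{(\alpha-1)j'}\|w\|_{\dot{B}^{-\alpha}_{\infty,1}}$, where the geometric sum converges precisely because $\alpha\ge 1$. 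Combined with the weight $2^{(1-\alpha)j}$ from the target norm $\dot{B}^{1-\alpha}_{\infty,1}$ and $j'\approx j$, this collapses to $\|u\|_{\dot{B}^{0}_{\infty,1}}\|v\|_{\dot{B}^{-\alpha}_{\infty,1}}$ (plus the symmetric term), with the $\widetilde{L}^{1}_{t}\times\widetilde{L}^{\infty}_{t}$ split matching $1=\tfrac{1}{1}+\tfrac{1}{\infty}$. The term $I_{2}$ is analogous and does not suffer the low-frequency amplification, since there $\nabla(-\Delta)^{-1}$ hits a single localized block while the low-frequency factor $S_{j'-1}$ carries the $\dot{B}^{-\alpha}_{\infty,1}$ datum, whose partial sum $\sum_{k\le j'-2}2^{\alpha k}\|\Delta_{k}\cdot\|_{L^{\infty}}\lesssim 2^{\alpha j'}\|\cdot\|_{\dot{B}^{-\alpha}_{\infty,1}}$ converges because $\alpha>0$.

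The main obstacle is $I_{3}$, the high-high interaction, exactly as in Lemma \ref{le3.2}: at the low target regularity $1-\alpha<0$ a naive estimate fails to sum. I would exploit the algebraic structure through the Leibniz-type identity $-\Delta(fg)=(-\Delta f)g+f(-\Delta g)-2\nabla f\cdot\nabla g$ with $f=(-\Delta)^{-1}\Delta_{j'}u$, which rewrites each summand of $I_{3}$ as $K_{1}+K_{2}+K_{3}$ as in \eqref{eq3.10}; this transfers $(-\Delta)^{-1}$ onto the individual high-frequency blocks, gaining two orders of smoothing ($2^{-2j'}$), while pulling one or two derivatives outside. Estimating block by block, the outer $-\Delta$ in $K_{1}$ contributes $2^{2j}$ and the outer $\nabla\cdot$ or $\partial_{m}$ in $K_{2},K_{3}$ contributes $2^{j}$; after the pairing $u\in\dot{B}^{-\alpha}_{\infty,1}$, $v\in\dot{B}^{0}_{\infty,1}$ the summation over $j'\ge j-N_{0}$ followed by $j\le j'+N_{0}$ converges iff $2-\alpha>0$ for $K_{2},K_{3}$ (and $3-\alpha>0$ for $K_{1}$, which is automatic). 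This is precisely where the restriction $\alpha<2$ enters, and it yields $\|I_{3}\|_{\widetilde{L}^{1}_{t}(\dot{B}^{1-\alpha}_{\infty,1})}\lesssim\|u\|_{\widetilde{L}^{\infty}_{t}(\dot{B}^{-\alpha}_{\infty,1})}\|v\|_{\widetilde{L}^{1}_{t}(\dot{B}^{0}_{\infty,1})}$ together with the symmetric term.

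Finally I would assemble the three bounds to obtain \eqref{eq3.33}. The summability index $q=1$ is used throughout, both to pass from the $\ell^{1}$ block sums back to the $\dot{B}^{\cdot}_{\infty,1}$ norms and to absorb the finite overlaps $|j'-j|\le 4$, while the time integrability is handled entirely by the H\"older split. The crux of the argument is the treatment of $I_{3}$ via the algebraic identity and the attendant constraint $\alpha<2$; the only other delicate point is the low-frequency control in $I_{1}$, which forces the negative-regularity factor under $\nabla(-\Delta)^{-1}$ and is marginal at the endpoint $\alpha=1$.
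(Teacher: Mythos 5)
Your proposal follows essentially the same route as the paper's proof: Bony decomposition into $I_{1}+I_{2}+I_{3}$, block-wise Bernstein estimates (Lemma \ref{le2.3}) in place of global multiplier continuity, the algebraic splitting $I_{3}=K_{1}+K_{2}+K_{3}$ of \eqref{eq3.10} for the high-high remainder, and the constraint $\alpha<2$ entering exactly through the convergence of $\sum_{j'\geq j-N_{0}}2^{(\alpha-2)(j'-j)}$ in $K_{2},K_{3}$ (with $\alpha<3$ automatic for $K_{1}$). The only deviations are immaterial: in $I_{2}$ and $I_{3}$ you assign the $\dot{B}^{-\alpha}_{\infty,1}$ and $\dot{B}^{0}_{\infty,1}$ norms to the opposite factors from the paper (harmless, since both $I_{2}$, $I_{3}$ and the right-hand side of \eqref{eq3.33} are symmetric in $u,v$), and in your displayed low-frequency sums the weights $2^{(\alpha-1)k}$, $2^{\alpha k}$ should multiply the Besov-weighted quantities $2^{-\alpha k}\|\Delta_{k}\cdot\|_{L^{\infty}}$ rather than $\|\Delta_{k}\cdot\|_{L^{\infty}}$ itself, a notational slip whose stated conclusions are nevertheless the correct ones.
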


\begin{proof}
Following from Lemma \ref{le3.2},  by applying H\"{o}lder's inequality, Lemmas \ref{le2.3} and \ref{le2.4}, we estimate the terms $I_{i}$ ($i=1,2,3$) as follows:
\begin{align*}
  \|\Delta_{j}I_{1}\|_{L^{1}_{t}(L^{\infty})}
 & \lesssim \sum_{|j'-j|\leq 4}\Big(\|\Delta_{j'}u\|_{L^{1}_{t}(L^{\infty})}\|\nabla(-\Delta)^{-1}S_{j'-1}v\|_{L^{\infty}_{t}(L^{\infty})}\nonumber\\
 &\ \ \ \ \ \ +
  \|\Delta_{j'}v\|_{L^{1}_{t}(L^{\infty})}\|\nabla(-\Delta)^{-1}S_{j'-1}u\|_{L^{\infty}_{t}(L^{\infty})}\Big)\nonumber\\
 & \lesssim \sum_{|j'-j|\leq 4}\Big(\|\Delta_{j'}u\|_{L^{1}_{t}(L^{\infty})}\sum_{k\leq j'-2}2^{(\alpha-1)k}2^{-\alpha k}\|\Delta_{k}v\|_{L^{\infty}_{t}(L^{\infty})}\nonumber\\&\ \ \ \ \ \ +
  \|\Delta_{j'}v\|_{L^{1}_{t}(L^{\infty})}\sum_{k\leq j'-2}2^{(\alpha-1)k}2^{-\alpha k}\|\Delta_{k}u\|_{L^{\infty}_{t}(L^{\infty})}\Big)\nonumber\\
  &\lesssim 2^{(\alpha-1)j}\sum_{|j'-j|\leq 4}\left(\|\Delta_{j'}u\|_{L^{1}_{t}(L^{\infty})}\|v\|_{\widetilde{L}^{\infty}_{t}(\dot{B}^{-\alpha}_{\infty,1})}
  +\|\Delta_{j'}v\|_{L^{1}_{t}(L^{\infty})}\|u\|_{\widetilde{L}^{\infty}_{t}(\dot{B}^{-\alpha}_{\infty,1})}\right).
\end{align*}
This along with Definition \ref{de2.2} leads to
\begin{align}\label{eq3.34}
      \|I_{1}\|_{\widetilde{L}^{1}_{t}(\dot{B}^{1-\alpha}_{\infty,1})}\lesssim
       \|u\|_{\widetilde{L}^{1}_{t}(\dot{B}^{0}_{\infty,1})}\|v\|_{\widetilde{L}^{\infty}_{t}(\dot{B}^{-\alpha}_{\infty,1})}
       +\|u\|_{\widetilde{L}^{\infty}_{t}(\dot{B}^{-\alpha}_{\infty,1})}\|v\|_{\widetilde{L}^{1}_{t}(\dot{B}^{0}_{\infty,1})}.
\end{align}
Similarly, for $I_{2}$,  we obtain
\begin{align*}
    \|\Delta_{j}I_{2}&\|_{L^{1}_{t}(L^{\infty})}
    \lesssim \sum_{|j'-j|\leq 4}2^{-j'}\sum_{k\leq j'-2}\Big(\|\Delta_{k}u\|_{L^{1}_{t}(L^{\infty})}\|\Delta_{j'}v\|_{L^{\infty}_{t}(L^{\infty})}
    +\|\Delta_{k}v\|_{L^{1}_{t}(L^{\infty})}\|\Delta_{j'}u\|_{L^{\infty}_{t}(L^{\infty})}\Big)\nonumber\\
     &\lesssim 2^{(\alpha-1)j}\sum_{|j'-j|\leq 4}2^{-\alpha j'}\Big(\|\Delta_{j'}v\|_{L^{\infty}_{t}(L^{\infty})}\|u\|_{\widetilde{L}^{1}_{t}(\dot{B}^{0}_{\infty,1})}
     +\|\Delta_{j'}u\|_{L^{\infty}_{t}(L^{\infty})}\|v\|_{\widetilde{L}^{1}_{t}(\dot{B}^{0}_{\infty,1})}\Big),
\end{align*}
which yields  directly to
\begin{align}\label{eq3.35}
  \|I_{2}\|_{\widetilde{L}^{1}_{t}(\dot{B}^{1-\alpha}_{\infty,1})}\lesssim
       \|u\|_{\widetilde{L}^{1}_{t}(\dot{B}^{0}_{\infty,1})}\|v\|_{\widetilde{L}^{\infty}_{t}(\dot{B}^{-\alpha}_{\infty,1})}
       +\|u\|_{\widetilde{L}^{\infty}_{t}(\dot{B}^{-\alpha}_{\infty,1})}\|v\|_{\widetilde{L}^{1}_{t}(\dot{B}^{0}_{\infty,1})}.
\end{align}
To treat with the remainder term $I_{3}$, as Lemma \ref{le3.2}, we split $I_{3}=K_{1}+K_{2}+K_{3}$  for $m=1,2,\cdots, n$,
 and consider $K_{1}$ and $K_{3}$
only. We infer from H\"{o}lder's inequality, Lemmas \ref{le2.3} and \ref{le2.4} that
\begin{align}\label{eq3.36}
    \|\Delta_{j}K_{1}\|_{L^{1}_{t}(L^{\infty})}&\lesssim 2^{2j}\sum_{j'\geq j-N_{0}}\sum_{|j'-j''|\leq1}
    \|(-\Delta)^{-1}\Delta_{j'}u\|_{L^{1}_{t}(L^{\infty})}\|\partial_{m}(-\Delta)^{-1}\Delta_{j''}v\|_{L^{\infty}_{t}(L^{\infty})}\nonumber\\
    &\lesssim 2^{2j}\sum_{j'\geq j-N_{0}}\sum_{|j'-j''|\leq1}2^{-2j'}
    \|\Delta_{j'}u\|_{L^{1}_{t}(L^{\infty})}2^{-j''}\|\Delta_{j''}v\|_{L^{\infty}_{t}(L^{\infty})}\nonumber\\
     &\lesssim 2^{2j}\sum_{j'\geq j-N_{0}}2^{(\alpha-3)j'}
    \|\Delta_{j'}u\|_{L^{1}_{t}(L^{\infty})}\|v\|_{\widetilde{L}^{\infty}_{t}(\dot{B}^{-\alpha}_{\infty,1})}\nonumber\\
    &\lesssim 2^{(\alpha-1)j}\sum_{j'\geq j-N_{0}}2^{(\alpha-3)(j'-j)}
    \|\Delta_{j'}u\|_{L^{1}_{t}(L^{\infty})}\|v\|_{\widetilde{L}^{\infty}_{t}(\dot{B}^{-\alpha}_{\infty,1})}.
\end{align}
\begin{align}\label{eq3.37}
    \|\Delta_{j}K_{3}\|_{L^{1}_{t}(L^{\infty})}&\lesssim 2^{j}\sum_{j'\geq j-N_{0}}\sum_{|j'-j''|\leq1}2^{-2j'}
    \|\Delta_{j'}u\|_{L^{1}_{t}(L^{\infty})}\|\Delta_{j''}v\|_{L^{\infty}_{t}(L^{\infty})}\nonumber\\
     &\lesssim 2^{j}\sum_{j'\geq j-N_{0}}2^{(\alpha-2)j'}
    \|\Delta_{j'}u\|_{L^{1}_{t}(L^{\infty})}\|v\|_{\widetilde{L}^{\infty}_{t}(\dot{B}^{-\alpha}_{\infty,1})}\nonumber\\
    &\lesssim 2^{(\alpha-1)j}\sum_{j'\geq j-N_{0}}2^{(\alpha-2)(j'-j)}
    \|\Delta_{j'}u\|_{L^{1}_{t}(L^{\infty})}\|v\|_{\widetilde{L}^{\infty}_{t}(\dot{B}^{-\alpha}_{\infty,1})}.
\end{align}
Under the assumption $1\leq\alpha<2$, we have
 $\alpha-3<0$ and $\alpha-2<0$. Hence,  putting the above estimates \eqref{eq3.36} and \eqref{eq3.37} together,  and multiplying $ 2^{(1-\alpha)j}$ to the resulting inequality, then taking $l^{1}$ norm implies that
\begin{align}\label{eq3.38}
      \| I_{3}\|_{\widetilde{L}^{1}_{t}(\dot{B}^{1-\alpha}_{\infty,1})}\lesssim
       \|u\|_{\widetilde{L}^{1}_{t}(\dot{B}^{0}_{\infty,1})}\|v\|_{\widetilde{L}^{\infty}_{t}(\dot{B}^{-\alpha}_{\infty,1})}.
\end{align}
Thanks to \eqref{eq3.34}, \eqref{eq3.35} and \eqref{eq3.38}, we get \eqref{eq3.33}. The proof of Lemma \ref{le3.7} is complete.
\end{proof}

\medskip

In order to prove the third part of Theorem \ref{th1.1}, we consider the resolution space $\widetilde{L}^{\infty}_{t}(\dot{B}^{-\alpha}_{\infty,1}(\mathbb{R}^{n}))\cap\widetilde{L}^{1}_{t}(\dot{B}^{0}_{\infty,1}(\mathbb{R}^{n}))$. Then,
 for the mapping \eqref{eq3.16},  we infer from Proposition \ref{eq3.1} and Lemma \ref{le3.7} that
\begin{align}\label{eq3.39}
      \| \mathbb{F}(u)\|_{\widetilde{L}^{\infty}_{t}(\dot{B}^{-\alpha}_{\infty,1})\cap\widetilde{L}^{1}_{t}(\dot{B}^{0}_{\infty,1})}&\lesssim \|u_{0}\|_{\dot{B}^{-\alpha}_{\infty,1}}+
       \|u\nabla(-\Delta)^{-1}u\|_{\widetilde{L}^{1}_{t}(\dot{B}^{1-\alpha}_{\infty,1})}\nonumber\\
       &\lesssim \|u_{0}\|_{\dot{B}^{-\alpha}_{\infty,1}}+
       \|u\|_{\widetilde{L}^{\infty}_{t}(\dot{B}^{-\alpha}_{\infty,1})\cap\widetilde{L}^{1}_{t}(\dot{B}^{0}_{\infty,1})}^{2}.
\end{align}
As before, applying the standard contraction mapping argument, we can show that if $\|u_{0}\|_{\dot{B}^{-\alpha}_{\infty,1}}$ is sufficiently small, then $\mathbb{F}$ is a contraction mapping from some suitable metric space into itself, this leads to that the system
\eqref{eq1.1} admits a unique solution in $u\in\widetilde{L}^{\infty}_{t}(\dot{B}^{-\alpha}_{\infty,1}(\mathbb{R}^{n}))\cap\widetilde{L}^{1}_{t}(\dot{B}^{0}_{\infty,1}(\mathbb{R}^{n}))$. We complete the proof, as desired.

\subsection{The case  $1<\alpha<2$ and $p=\infty$: Gevrey analyticity}
Set  $U(t):=e^{t^{\frac{1}{\alpha}}\Lambda_{1}}u(t)$.  Then $U(t)$ satisfies the following integral equation
\begin{equation}\label{eq3.40}
   U(t)=e^{t^{\frac{1}{\alpha}}\Lambda_{1}-t\Lambda^{\alpha}}u_{0}-\int_{0}^{t}\left[e^{t^{\frac{1}{\alpha}}\Lambda_{1}-(t-\tau)\Lambda^{\alpha}}\nabla\cdot
  \left (e^{-\tau^{\frac{1}{\alpha}}\Lambda_{1}}U\cdot e^{-\tau^{\frac{1}{\alpha}}\Lambda_{1}}\nabla(-\Delta)^{-1}U\right)\right](\tau)d\tau.
\end{equation}
Consider the linear part, since  the symbol $e^{t^{\frac{1}{\alpha}}|\xi|_{1}-\frac{t}{2}|\xi|^{\alpha}}$ is uniformly bounded for all $\xi$ and decays exponentially for $|\xi|\gg1$,  when localized in dyadic blocks in the Fourier spaces,  the Fourier multiplier $F_{\alpha}:=e^{t^{\frac{1}{\alpha}}\Lambda_{1}-\frac{1}{2}t\Lambda^{\alpha}}$ maps uniformly bounded from $L^{\infty}$ to $L^{\infty}$ for all $t\geq0$. Then,  by Young's inequality,  we have
\begin{align*}
  \|e^{t^{\frac{1}{\alpha}}\Lambda_{1}-t\Lambda^{\alpha}}u_{0}\|_{\widetilde{L}^{\infty}_{t}(\dot{B}^{-\alpha}_{\infty,1})
    \cap\widetilde{L}^{1}_{t}(\dot{B}^{0}_{\infty,1})}\lesssim
    \|e^{-\frac{1}{2}t\Lambda^{\alpha}}u_{0}\|_{\widetilde{L}^{\infty}_{t}(\dot{B}^{-\alpha}_{\infty,1})
    \cap\widetilde{L}^{1}_{t}(\dot{B}^{0}_{\infty,1})}\lesssim \|u_{0}\|_{\dot{B}^{-\alpha}_{\infty,1}}.
\end{align*}
For the nonlinear, by proceeding the same line as the proof of Lemma \ref{le3.6}, and observe that in general, the operators  $K_{\varsigma}$  and $Z_{t,\varsigma, \mu}$ defined in Lemma \ref{le3.6} do not map $L^{\infty}$ to  $L^{\infty}$ boundedly. However, when localized in dyadic blocks in the Fourier spaces, these operators are bounded in $L^{\infty}$.
Therefore, we can follow the calculations line by line from \eqref{eq3.34} to \eqref{eq3.38} in the proof of Lemma \ref{le3.7} to deal with the nonlinear term, and finally together with the estimate of the linear term ensure that
\begin{equation*}
    \|u(t)\|_{\widetilde{L}^{\infty}_{t}(e^{t^{\frac{1}{\alpha}}\Lambda_{1}}\dot{B}^{-\alpha}_{\infty,1})
    \cap\widetilde{L}^{1}_{t}(e^{t^{\frac{1}{\alpha}}\Lambda_{1}}\dot{B}^{0}_{\infty,1})}
       \lesssim \|u_{0}\|_{\dot{B}^{-\alpha}_{\infty,1}}+
       \|u(t)\|_{\widetilde{L}^{\infty}_{t}(e^{t^{\frac{1}{\alpha}}\Lambda_{1}}\dot{B}^{-\alpha}_{\infty,1})\cap\widetilde{L}^{1}_{t}(e^{t^{\frac{1}{\alpha}}\Lambda_{1}}\dot{B}^{0}_{\infty,1})}^{2}.
\end{equation*}
This completes the proof, as desired.
\subsection{Decay rate of solution}

In this subsection, we show the decay rate estimates of solutions obtained in Theorem \ref{th1.1}. The proof  is based on the following result.
\begin{lemma}\label{le3.8}
For all $\sigma\geq 0$ and $1<\alpha\leq 2$, the operator $\Lambda^{\sigma}e^{-t^{\frac{1}{\alpha}}\Lambda_{1}}$ is the convolution operator with a kernel $K_{\sigma}(t)\in L^{1}(\mathbb{R}^{n})$ for all $t>0$. Moreover,
\begin{equation}\label{eq3.41}
  \|K_{\sigma}(t)\|_{L^{1}}\leq C_{\sigma}t^{-\frac{\sigma}{\alpha}}.
\end{equation}
\end{lemma}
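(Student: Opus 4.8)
The operator $\Lambda^{\sigma}e^{-t^{\frac{1}{\alpha}}\Lambda_{1}}$ is the Fourier multiplier with symbol $|\xi|^{\sigma}e^{-t^{\frac{1}{\alpha}}|\xi|_{1}}$, so its kernel is $K_{\sigma}(t)=\mathcal{F}^{-1}[|\xi|^{\sigma}e^{-t^{\frac{1}{\alpha}}|\xi|_{1}}]$. The plan is first to isolate the time dependence by a dilation, and then to prove the $L^{1}$ bound for the resulting frozen kernel. Writing $s=t^{\frac{1}{\alpha}}$ and substituting $\xi=\zeta/s$ in the defining integral, the homogeneities $|\xi|^{\sigma}=s^{-\sigma}|\zeta|^{\sigma}$, $s|\xi|_{1}=|\zeta|_{1}$ and $d\xi=s^{-n}d\zeta$ give $K_{\sigma}(t)(x)=s^{-n-\sigma}G_{\sigma}(x/s)$, where $G_{\sigma}:=\mathcal{F}^{-1}[|\xi|^{\sigma}e^{-|\xi|_{1}}]$ is exactly the kernel of $\Lambda^{\sigma}e^{-\Lambda_{1}}$. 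Since the change of variables $x\mapsto x/s$ contributes a Jacobian $s^{n}$, this yields the identity $\|K_{\sigma}(t)\|_{L^{1}}=s^{-\sigma}\|G_{\sigma}\|_{L^{1}}=t^{-\frac{\sigma}{\alpha}}\|\Lambda^{\sigma}e^{-\Lambda_{1}}\|_{L^{1}}$, which is precisely \eqref{eq3.41} with $C_{\sigma}=\|\Lambda^{\sigma}e^{-\Lambda_{1}}\|_{L^{1}}$. Thus the whole statement reduces to the single claim that $G_{\sigma}\in L^{1}(\mathbb{R}^{n})$.

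To prove $G_{\sigma}\in L^{1}$ I would control $G_{\sigma}(y)$ pointwise on the two regimes $|y|\le 1$ and $|y|\ge 1$. For the near field, the symbol $|\xi|^{\sigma}e^{-|\xi|_{1}}$ is integrable in $\xi$ (the exponential factor dominates the polynomial growth of $|\xi|^{\sigma}$), so $G_{\sigma}\in L^{\infty}$ and is trivially integrable on the unit ball. The real work is the far field, where one needs a decay estimate of the form $|G_{\sigma}(y)|\lesssim |y|^{-n-\sigma}$ for $|y|\ge 1$, leaving room to integrate at infinity because $\sigma\ge 0$. Here I would exploit the product structure $e^{-|\xi|_{1}}=\prod_{j=1}^{n}e^{-|\xi_{j}|}$: away from the origin the factor $|\xi|^{\sigma}$ is smooth, so repeated integration by parts in each variable $\xi_{j}$ against $e^{iy\cdot\xi}$ transfers derivatives onto $|\xi|^{\sigma}e^{-|\xi_{j}|}$ and produces the required decay, while the origin contribution is governed by the homogeneity of $|\xi|^{\sigma}$, whose transform carries exactly the tail $|y|^{-n-\sigma}$.

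The main obstacle is the simultaneous lack of smoothness of the symbol: it is singular at the origin because of $|\xi|^{\sigma}$ and has corner (Lipschitz) singularities along each coordinate hyperplane $\{\xi_{j}=0\}$ because of $|\xi|_{1}$. This is why the crude criterion ``$m\in H^{s}$ with $s>n/2\Rightarrow\mathcal{F}^{-1}m\in L^{1}$'' cannot be applied directly: the corners cap the attainable Sobolev regularity below $3/2$, which is insufficient as soon as $n\ge 3$. The resolution, and the technically delicate point, is that the corners must be treated directionally through the one-dimensional factors: an integration by parts in $\xi_{j}$ only differentiates $e^{-|\xi_{j}|}$, whose derivative is the bounded function $-\operatorname{sgn}(\xi_{j})e^{-|\xi_{j}|}$, and the boundary contributions concentrated on $\{\xi_{j}=0\}$ produce at worst the Cauchy-kernel decay $|y_{j}|^{-2}$ familiar from the identity $\mathcal{F}^{-1}[e^{-|\xi_{j}|}]=\tfrac{1}{\pi(1+y_{j}^{2})}$. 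Combining this directional decay with the homogeneity tail coming from the origin yields a pointwise bound on $G_{\sigma}$ that is integrable over $\mathbb{R}^{n}$, completing the argument; the special value $\sigma=0$ can moreover be read off at once, since then $G_{0}=\prod_{j=1}^{n}\tfrac{1}{\pi(1+y_{j}^{2})}\in L^{1}(\mathbb{R}^{n})$.
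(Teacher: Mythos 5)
Your scaling reduction is exactly the paper's first step: the substitution $\xi\mapsto t^{\frac{1}{\alpha}}\xi$ reduces the whole lemma to showing $G_{\sigma}=\mathcal{F}^{-1}[|\xi|^{\sigma}e^{-|\xi|_{1}}]\in L^{1}(\mathbb{R}^{n})$, with $C_{\sigma}=\|\Lambda^{\sigma}e^{-\Lambda_{1}}\|_{L^{1}}$, and that part of your argument is complete, as is the near-field observation that $G_{\sigma}\in L^{\infty}$. The gaps are in the far-field analysis. First, the bound you set as the target, $|G_{\sigma}(y)|\lesssim|y|^{-n-\sigma}$ for $|y|\geq 1$, is false for $\sigma>0$ and $n\geq2$: along the axis $y=(y_{1},0,\dots,0)$ one can integrate out $\xi_{2},\dots,\xi_{n}$ and the Lipschitz corner of $e^{-|\xi_{1}|}$ across $\{\xi_{1}=0\}$ (with nonzero coefficient $\int_{\mathbb{R}^{n-1}}|\xi'|^{\sigma}e^{-|\xi'|_{1}}d\xi'$) forces decay exactly of order $|y_{1}|^{-2}$, not $|y_{1}|^{-n-\sigma}$; your own later remark that corners produce only Cauchy-kernel decay $|y_{j}|^{-2}$ contradicts this target, and you never state what the actual pointwise bound is. This part is repairable, since a sum of directional Cauchy factors and an origin tail is still integrable. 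Second, and more seriously, the origin is handled only by the heuristic that the homogeneity of $|\xi|^{\sigma}$ "carries the tail $|y|^{-n-\sigma}$". If one tries to realize this by repeated integration by parts, every derivative falling on $|\xi|^{\sigma}$ costs a factor $|\xi|^{-1}$; reaching any integrable decay $|y|^{-k}$, $k\geq n+1$, requires $\int_{|\xi|\leq1}|\xi|^{\sigma-k}d\xi<\infty$, i.e. $k<\sigma+n$, which is incompatible with $k\geq n+1$ whenever $\sigma\leq1$. So the plan cannot be executed as written near $\xi=0$. The missing mechanism is a dyadic decomposition of the low frequencies, and that is precisely the paper's proof: writing $|\xi|^{\sigma}\phi(2^{j}\xi)=2^{-j\sigma}\hat{\Phi}_{\sigma}(2^{j}\xi)$ with $\Phi_{\sigma}$ Schwartz, applying Young's inequality against the product Poisson kernel $\mathcal{F}^{-1}(e^{-|\xi|_{1}})=\prod_{j=1}^{n}\pi^{-1}(1+y_{j}^{2})^{-1}$, and summing the geometric series $\sum_{j\geq0}2^{-j\sigma}$, convergent exactly because $\sigma>0$ (the case $\sigma=0$ being the plain Poisson kernel). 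This summation is the heart of the matter and is absent from your proposal.

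It is worth noting that your directional idea is, conversely, the right tool for the one step the paper itself only asserts, namely $\|\mathcal{F}^{-1}(\theta(\xi)|\xi|^{\sigma}e^{-|\xi|_{1}})\|_{L^{1}}<\infty$ for the high-frequency piece, where the corners on the hyperplanes $\{\xi_{j}=0\}$ are the only singularities. There your tensorized integration by parts can be made rigorous: applying $\prod_{j=1}^{n}(1-\partial_{\xi_{j}}^{2})$ to $\theta|\xi|^{\sigma}e^{-|\xi|_{1}}$ yields an $L^{1}$ function plus finite measures carried by the coordinate hyperplanes (coming from $\partial_{\xi_{j}}^{2}e^{-|\xi_{j}|}=e^{-|\xi_{j}|}-2\delta(\xi_{j})$ tensored with exponentially decaying densities), whence $|\mathcal{F}^{-1}(\theta|\xi|^{\sigma}e^{-|\xi|_{1}})(y)|\lesssim\prod_{j=1}^{n}(1+y_{j}^{2})^{-1}\in L^{1}$. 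In short, the two arguments are complementary: the paper is rigorous at the origin and silent at the corners, while you are essentially right at the corners but have a genuine gap at the origin that only the dyadic decomposition (or an equivalent rescaled integration by parts on each annulus) can close.
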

\begin{proof}
It suffices to consider the operator $\Lambda^{\sigma}e^{-\Lambda_{1}}$ and its kernel $\hat{k}_{\sigma}(\xi)=|\xi|^{\sigma}e^{-|\xi|_{1}}$ due to the general case can be obtained by using the scaling: $\xi\mapsto t^{\frac{1}{\alpha}}\xi$.  It is clear that $\hat{k}_{\sigma}(\xi)=|\xi|^{\sigma}e^{-|\xi|_{1}}\in L^{1}$. Thus $k_{\sigma}$ is a continuous bounded function.
Moreover, if $\sigma>0$, we introduce a function $\phi\in\mathcal{S}(\mathbb{R}^{n})$ so that $0\notin \operatorname{Supp}\phi$ and $\sum_{j\in\mathbb{Z}}\phi(2^{j}\xi)=1$. Then, $|\xi|^{\sigma}\phi(\xi)\in\mathcal{S}(\mathbb{R}^{n})$, and if we write
$|\xi|^{\sigma}\phi(\xi)=\hat{\Phi}_{\sigma}(\xi)$ and $\theta=1-\sum_{j\geq0}\phi(2^{j}\xi)$, then we have
\begin{equation*}
  \hat{k}_{\sigma}(\xi)=\sum_{j\geq0}2^{-j\sigma}\hat{\Phi}_{\sigma}(2^{j}\xi)e^{-|\xi|_{1}}+\theta(\xi)|\xi|^{\sigma}e^{-|\xi|_{1}}.
\end{equation*}
Hence,
\begin{equation*}
  \|k_{\sigma}\|_{L^{1}}\leq\sum_{j\geq0}2^{-j\sigma}\|\Phi_{\sigma}\|_{L^{1}}\|\mathcal{F}^{-1}(e^{-|\xi|_{1}})\|_{L^{1}}+
  \|\mathcal{F}^{-1}(\theta(\xi)|\xi|^{\sigma}e^{-|\xi|_{1}})\|_{L^{1}}<\infty.
\end{equation*}
We complete the proof of Lemma \ref{le3.8}.
\end{proof}

\medskip

Now the existence parts of Theorem \ref{th1.1} tell us that if the initial data $u_{0}$ is sufficiently small in critical
Besov space $\dot{B}^{-\alpha+\frac{n}{p}}_{p,q}(\mathbb{R}^{n})$ for either $1<\alpha\leq 2$, $1<p<\infty$ and $1\leq q\leq\infty$ or $1<\alpha<2$, $p=\infty$ and $q=1$, then the solution is  in the Gevrey class. Consequently, for all $\sigma\geq0$,
applying Lemma \ref{le3.8},  we get the following time decay of mild solution in terms of the homogeneous Besov-norm:
\begin{align}\label{eq3.42}
  \|\Lambda^{\sigma}u(t) \|_{\dot{B}^{-\alpha+\frac{n}{p}}_{p,q}}&
  =\|\Lambda^{\sigma}e^{-t^{\frac{1}{\alpha}}\Lambda_{1}}e^{t^{\frac{1}{\alpha}}\Lambda_{1}}u(t)\|_{\dot{B}^{-\alpha+\frac{n}{p}}_{p,q}}\nonumber\\
   &\leq C_{\sigma}t^{-\frac{\sigma}{\alpha}}
  \|e^{t^{\frac{1}{\alpha}}\Lambda_{1}}u(t)\|_{\dot{B}^{-\alpha+\frac{n}{p}}_{p,q}}\nonumber\\
  &\leq C_{\sigma}t^{-\frac{\sigma}{\alpha}}
  \|u_{0}\|_{\dot{B}^{-\alpha+\frac{n}{p}}_{p,q}}.
\end{align}
This completes the proof, as desired.

\section{The case $\alpha=1$: The proof of Theorem \ref{th1.2}}

In this section, we consider the case $\alpha=1$ of the system \eqref{eq1.1} with initial data in critical space $\dot{B}^{-1+\frac{n}{p}}_{p,1}(\mathbb{R}^{n})$ ($1\leq p\leq\infty$). The global well-posedness with small initial data and Gevrey analyticity will be established in the case that $1\leq p<\infty$ and $p=\infty$.

\subsection{The case $1\leq p<\infty$: Well-posedness}

We first recall some time-space estimates for solutions of the linear evolution
equation:
\begin{equation}\label{eq4.1}
\begin{cases}
  \partial_{t}u+\Lambda u= f(x,t), \ \
  &x\in\mathbb{R}^{n}, \ t>0,\\
  u(x,0)=u_{0}(x), \ \ &x\in\mathbb{R}^{n}.
\end{cases}
\end{equation}

\begin{proposition}\label{pro4.1}  {\em (\cite{HW13})}
Let $s\in \mathbb{R}$, $1\leq p,q\leq\infty$ and
$0<T\leq\infty$. Assume that $u_{0}\in
\dot{B}^{s}_{p,q}(\mathbb{R}^{n})$ and $f\in\widetilde{L}^{1}_{T}(\dot{B}^{s}_{p,q}(\mathbb{R}^{n}))$. Then \eqref{eq4.1} has
a unique solution $u\in\widetilde{L}^{\infty}_{T}(\dot{B}^{s}_{p,q}(\mathbb{R}^{n}))\cap\widetilde{L}^{1}_{T}(\dot{B}^{s+1}_{p,q}(\mathbb{R}^{n}))$. In addition, there exists a
constant $C>0$ depending only on $n$ such that
\begin{equation}\label{eq4.2}
  \|u\|_{\widetilde{L}^{\infty}_{T}(\dot{B}^{s}_{p,q})\cap\widetilde{L}^{1}_{T}(\dot{B}^{s+1}_{p,q})}\leq
  C\big(\|u_{0}\|_{\dot{B}^{s}_{p,q}}+\|f\|_{\widetilde{L}^{1}_{T}(\dot{B}^{s}_{p,q})}\big).
\end{equation}
\end{proposition}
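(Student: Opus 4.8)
The plan is to observe that Proposition \ref{pro4.1} is precisely the $\alpha=1$ specialization of the smoothing estimate \eqref{eq3.3} in Proposition \ref{pro3.1}, and to reprove it directly by the Fourier-localization scheme already used in the proof of Proposition \ref{pro3.5}. The starting point is Duhamel's formula
\[
u(t)=e^{-t\Lambda}u_0+\int_0^t e^{-(t-\tau)\Lambda}f(\tau)\,d\tau,
\]
whose existence and uniqueness in the stated class follow once the a priori bound \eqref{eq4.2} is established on the dense subclass $\mathcal{C}([0,T];\mathcal{S}(\mathbb{R}^{n}))$ used in Definition \ref{de2.2}. Hence it suffices to prove \eqref{eq4.2}.

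First I would apply the dyadic block $\Delta_{j}$ to Duhamel's formula and take the $L^{p}$ norm. Since $\operatorname{supp}\mathcal{F}(\Delta_{j}g)\subset 2^{j}\mathcal{C}$, Lemma \ref{le2.5} with $\alpha=1$ and $\lambda=2^{j}$ yields the pointwise-in-time bound
\[
\|\Delta_{j}u(t)\|_{L^{p}}\lesssim e^{-\kappa 2^{j}t}\|\Delta_{j}u_0\|_{L^{p}}+\int_0^t e^{-\kappa 2^{j}(t-\tau)}\|\Delta_{j}f(\tau)\|_{L^{p}}\,d\tau.
\]
Next I would take the two relevant time norms of this single-block inequality. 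For the $\widetilde{L}^{\infty}_{T}$ part I bound the exponential by $1$ and the convolution integral by $\|\Delta_{j}f\|_{L^{1}_{T}(L^{p})}$; for the $\widetilde{L}^{1}_{T}$ part I apply Young's convolution inequality in the time variable, using $\|e^{-\kappa 2^{j}\cdot}\|_{L^{1}(0,T)}\leq (\kappa 2^{j})^{-1}$. This produces
\[
\|\Delta_{j}u\|_{L^{\infty}_{T}(L^{p})}\lesssim \|\Delta_{j}u_0\|_{L^{p}}+\|\Delta_{j}f\|_{L^{1}_{T}(L^{p})},\qquad
\|\Delta_{j}u\|_{L^{1}_{T}(L^{p})}\lesssim 2^{-j}\bigl(\|\Delta_{j}u_0\|_{L^{p}}+\|\Delta_{j}f\|_{L^{1}_{T}(L^{p})}\bigr).
\]
Crucially, the factor $2^{-j}$ gained by integrating the decaying exponential is exactly what promotes $\dot{B}^{s}_{p,q}$ regularity of the data to $\dot{B}^{s+1}_{p,q}$ regularity of the solution in the $\widetilde{L}^{1}_{T}$ scale; this is where $\alpha=1$ enters, through $\lambda^{\alpha}=2^{j}$. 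Multiplying the first estimate by $2^{sj}$ and the second by $2^{(s+1)j}$, and then taking the $\ell^{q}(\mathbb{Z})$ norm in $j$ --- performed after the time integration, in accordance with the Chemin--Lerner norm of Definition \ref{de2.2} --- gives \eqref{eq4.2}.

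The argument involves no genuine obstacle, since the only structural input, the exponential dissipation estimate of Lemma \ref{le2.5}, is valid for every $\alpha\in[1,2]$ and in particular at the endpoint $\alpha=1$; the limiting case merely reduces the derivative gain from $\alpha$ to $1$. The one point requiring care is to respect the ordering of the norms: one must take the $L^{\rho}$ norm in time on each dyadic block first and only afterwards sum in $\ell^{q}$, so that Young's inequality may be applied block-by-block and the weights $e^{-\kappa 2^{j}t}$ do not interfere across frequencies. Respecting this order is precisely what the tilde-norm $\widetilde{L}^{\rho}_{T}(\dot{B}^{s}_{p,q})$ encodes, and it is what makes the endpoint exponents $\rho\in\{1,\infty\}$ admissible without loss.
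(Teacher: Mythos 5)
Your proof is correct, and in fact the paper offers no proof of Proposition \ref{pro4.1} at all: it is imported by citation from \cite{HW13}, just as its general-$\alpha$ analogue, Proposition \ref{pro3.1}, is imported from \cite{BW09}. Your argument --- Duhamel's formula, the block-wise decay estimate of Lemma \ref{le2.5} with $\lambda=2^{j}$ and $\alpha=1$, Young's inequality in time on each block, then weighting by $2^{sj}$ (resp. $2^{(s+1)j}$) and summing in $\ell^{q}$ --- is precisely the standard Chemin--Lerner smoothing estimate that underlies the cited results, so there is nothing substantive to compare; the only point worth tightening is the uniqueness claim, which for this linear problem follows immediately by applying the a priori estimate \eqref{eq4.2} to the difference of two solutions (zero data and zero forcing), rather than by a density argument.
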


Now for any initial data $u_{0}\in\dot{B}^{-1+\frac{n}{p}}_{p,1}(\mathbb{R}^{n})$, we consider the resolution space $\widetilde{L}^{\infty}_{t}(\dot{B}^{-1+\frac{n}{p}}_{p,1}(\mathbb{R}^{n}))$.
Slightly modifying the proof of Lemma \ref{le3.7}, we get the following result.

\begin{lemma}\label{le4.2} For any $u,v \in\widetilde{L}^{\infty}_{t}(\dot{B}^{-1+\frac{n}{p}}_{p,1}) $,  we have
\begin{equation}\label{eq4.3}
    \|u\nabla(-\Delta)^{-1}v+v\nabla(-\Delta)^{-1}u\|_{\widetilde{L}^{\infty}_{t}(\dot{B}^{-1+\frac{n}{p}}_{p,1})}\lesssim
       \|u\|_{\widetilde{L}^{\infty}_{t}(\dot{B}^{-1+\frac{n}{p}}_{p,1})}\|v\|_{\widetilde{L}^{\infty}_{t}(\dot{B}^{-1+\frac{n}{p}}_{p,1})}.
\end{equation}
\end{lemma}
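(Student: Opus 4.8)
The plan is to mirror the proof of Lemma~\ref{le3.2} (equivalently the $p=\infty$ argument of Lemma~\ref{le3.7}) in the borderline configuration $\rho=\rho_{1}=\rho_{2}=\infty$, $q=1$, $\varepsilon=0$ and target regularity $s=-1+\frac{n}{p}$; this is precisely the symmetric critical case $\alpha=1$ in which both inputs sit at the same index $-1+\frac{n}{p}$. Since all three time-integrability exponents equal $\infty$, H\"older's inequality in time is trivial and the time variable factors out, so the estimate reduces to a family of stationary dyadic product estimates carried under the $\widetilde{L}^{\infty}_{t}$-norm. First I would apply Bony's paraproduct decomposition to write
\begin{equation*}
u\nabla(-\Delta)^{-1}v+v\nabla(-\Delta)^{-1}u=I_{1}+I_{2}+I_{3}
\end{equation*}
exactly as in \eqref{eq3.5}, with $I_{1},I_{2}$ the two paraproduct pieces and $I_{3}$ the remainder.

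For the paraproducts $I_{1}$ and $I_{2}$ I would follow \eqref{eq3.6}--\eqref{eq3.9}, using Bernstein's inequality (Lemma~\ref{le2.3}) to move the low-frequency factor into $L^{\infty}$ at the cost of a factor $2^{nk/p}$, and Lemma~\ref{le2.4} for the order $-1$ multiplier $\nabla(-\Delta)^{-1}$. For $I_{1}$ this gives $\|\nabla(-\Delta)^{-1}S_{j'-1}v\|_{L^{\infty}}\lesssim\sum_{k\le j'-2}2^{(-1+n/p)k}\|\Delta_{k}v\|_{L^{p}}\le\|v\|_{\widetilde{L}^{\infty}_{t}(\dot{B}^{-1+n/p}_{p,1})}$, where the choice $q=1$ is exactly what lets this low-frequency sum converge directly to the $\dot{B}^{-1+\frac{n}{p}}_{p,1}$-norm, so no auxiliary $\varepsilon>0$ is needed. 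For $I_{2}$ one places $S_{j'-1}u$ in $L^{\infty}$, obtaining $\|S_{j'-1}u\|_{L^{\infty}}\lesssim 2^{j'}\|u\|_{\widetilde{L}^{\infty}_{t}(\dot{B}^{-1+n/p}_{p,1})}$, and the growth $2^{j'}$ is absorbed by the $2^{-j'}$ smoothing of $\nabla(-\Delta)^{-1}$ acting on the high-frequency factor $\Delta_{j'}v$. Multiplying by $2^{(-1+n/p)j}$ and taking the $\ell^{1}$-sum in $j$ then yields the claimed bound for both $I_{1}$ and $I_{2}$.

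The main obstacle is the remainder $I_{3}$, the high--high interaction. Here the two relevant input indices are $-1+\frac{n}{p}$ and $\frac{n}{p}$ (the latter from $\nabla(-\Delta)^{-1}v$), whose sum $-1+\frac{2n}{p}$ fails to be positive once $p\ge 2n$, so the naive remainder estimate diverges. This is exactly where the algebraic structure of the system must be exploited: following \eqref{eq3.10}, I would use $(-\Delta)(-\Delta)^{-1}=\mathrm{Id}$ to integrate by parts and redistribute derivatives, splitting $I_{3}=K_{1}+K_{2}+K_{3}$. Estimating $K_{1}$ and $K_{3}$ (with $K_{2}$ analogous to $K_{3}$) by H\"older's inequality together with Lemmas~\ref{le2.3} and \ref{le2.4}, and treating the cases $2\le p<\infty$ and $1\le p<2$ separately as in \eqref{eq3.11}--\eqref{eq3.14}, reduces everything to geometric sums over $j'\ge j-N_{0}$. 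Their convergence is governed by the four exponents of Lemma~\ref{le3.2}, which for $s=-1+\frac{n}{p}$ become $1+\frac{2n}{p}>0$, $\frac{2n}{p}>0$, $n+1>0$ and $n>0$, all valid for every $1\le p<\infty$. Summing the contributions of $I_{1},I_{2},I_{3}$ and invoking the $u\leftrightarrow v$ symmetry then gives \eqref{eq4.3}.
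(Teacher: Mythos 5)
Your proposal is correct and follows essentially the same route as the paper's own proof: the identical Bony decomposition into $I_{1}+I_{2}+I_{3}$, the same Bernstein/multiplier estimates for the two paraproducts with $q=1$ absorbing the low-frequency sums at $\varepsilon=0$, and the same algebraic splitting $I_{3}=K_{1}+K_{2}+K_{3}$ with the case distinction $2\leq p<\infty$ versus $1\leq p<2$, yielding exactly the geometric decay exponents $1+\frac{2n}{p}$, $\frac{2n}{p}$, $n+1$, $n$ that appear in the paper's estimates \eqref{eq4.8}--\eqref{eq4.11}. Your added observation that the naive high--high remainder estimate fails precisely when $p\geq 2n$, which is what forces the use of the system's algebraic structure, is a correct and useful piece of motivation that the paper leaves implicit.
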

\begin{proof}
We calculate the estimation of  $I_{1}$ as follows:
\begin{align}\label{eq4.4}
  \|\Delta_{j}I_{1}\|_{L^{\infty}_{t}(L^{p})}
  &\lesssim \sum_{|j'-j|\leq 4}\Big(\|\Delta_{j'}u\|_{L^{\infty}_{t}(L^{p})}\|\nabla(-\Delta)^{-1}S_{j'-1}v\|_{L^{\infty}_{t}(L^{\infty})}
 \nonumber\\
 &\ \ \ +\|\Delta_{j'}v\|_{L^{\infty}_{t}(L^{p})}\|\nabla(-\Delta)^{-1}S_{j'-1}u\|_{L^{\infty}_{t}(L^{\infty})}\Big)\nonumber\\
  &\lesssim \sum_{|j'-j|\leq 4}\Big(\|\Delta_{j'}u\|_{L^{\infty}_{t}(L^{p})}\sum_{k\leq j'-2}2^{(-1+\frac{n}{p})k}\|\Delta_{k}v\|_{L^{\infty}_{t}(L^{p})}\nonumber\\
  &\ \ \ +\|\Delta_{j'}v\|_{L^{\infty}_{t}(L^{p})}\sum_{k\leq j'-2}2^{(-1+\frac{n}{p})k}\|\Delta_{k}u\|_{L^{\infty}_{t}(L^{p})}\Big)\nonumber\\
  &\lesssim\sum_{|j'-j|\leq 4}\left(\|\Delta_{j'}u\|_{L^{\infty}_{t}(L^{p})}
  \|v\|_{\widetilde{L}^{\infty}_{t}(\dot{B}^{-1+\frac{n}{p}}_{p,1})}+\|\Delta_{j'}v\|_{L^{\infty}_{t}(L^{p})}
  \|u\|_{\widetilde{L}^{\infty}_{t}(\dot{B}^{-1+\frac{n}{p}}_{p,1})}\right).
\end{align}
Multiplying $2^{(-1+\frac{n}{p})j}$ to \eqref{eq4.4}, then taking $l^{1}$ norm to the resulting inequality, we get
\begin{align}\label{eq4.5}
  \|I_{1}\|_{\widetilde{L}^{\infty}_{t}(\dot{B}^{-1+\frac{n}{p}}_{p,1})}\lesssim
       \|u\|_{\widetilde{L}^{\infty}_{t}(\dot{B}^{-1+\frac{n}{p}}_{p,1})}\|v\|_{\widetilde{L}^{\infty}_{t}(\dot{B}^{-1+\frac{n}{p}}_{p,1})}.
\end{align}
Similarly, for $I_{2}$,
\begin{align}\label{eq4.6}
   \|\Delta_{j}I_{2}\|_{L^{\infty}_{t}(L^{p})}
   &\lesssim \sum_{|j'-j|\leq 4}\Big(\sum_{k\leq j'-2}2^{k}2^{(-1+\frac{n}{p})k}\|\Delta_{k}u\|_{L^{\infty}_{t}(L^{p})}2^{-j'}\|\Delta_{j'}v\|_{L^{\infty}_{t}(L^{p})}\nonumber\\
   &\ \ \ +\sum_{k\leq j'-2}2^{k}2^{(-1+\frac{n}{p})k}\|\Delta_{k}v\|_{L^{\infty}_{t}(L^{p})}2^{-j'}\|\Delta_{j'}u\|_{L^{\infty}_{t}(L^{p})}\Big)\nonumber\\
  &\lesssim \sum_{|j'-j|\leq 4}\left(\|\Delta_{j'}v\|_{L^{\infty}_{t}(L^{p})}\|u\|_{\widetilde{L}^{\infty}_{t}(\dot{B}^{-1+\frac{n}{p}}_{p,1})}+
  \|\Delta_{j'}u\|_{L^{\infty}_{t}(L^{p})}\|v\|_{\widetilde{L}^{\infty}_{t}(\dot{B}^{-1+\frac{n}{p}}_{p,1})}\right),
\end{align}
which leads directly to
\begin{align}\label{eq4.7}
  \|I_{2}\|_{\widetilde{L}^{\infty}_{t}(\dot{B}^{-1+\frac{n}{p}}_{p,1})}\lesssim
       \|u\|_{\widetilde{L}^{\infty}_{t}(\dot{B}^{-1+\frac{n}{p}}_{p,1})}\|v\|_{\widetilde{L}^{\infty}_{t}(\dot{B}^{-1+\frac{n}{p}}_{p,1})}.
\end{align}
Moreover, for the remainder term $I_{3}=K_{1}+K_{2}+K_{3}$  for $m=1,2,\cdots, n$.  In the case that $2\leq p<\infty$,
 $K_{1}$  and $K_{3}$ can be estimated  as follows ($K_{2}$ can be done analogously):
\begin{align}\label{eq4.8}
    \|\Delta_{j}K_{1}\|_{L^{\infty}_{t}(L^{p})}&\lesssim 2^{(2+\frac{n}{p})j}\sum_{j'\geq j-N_{0}}\sum_{|j'-j''|\leq1}
    \|(-\Delta)^{-1}\Delta_{j'}u\|_{L^{\infty}_{t}(L^{p})}\|\partial_{m}(-\Delta)^{-1}\Delta_{j''}v\|_{L^{\infty}_{t}(L^{p})}\nonumber\\
    &\lesssim 2^{(2+\frac{n}{p})j}\sum_{j'\geq j-N_{0}}\sum_{|j'-j''|\leq1}2^{-2j'}
    \|\Delta_{j'}u\|_{L^{\infty}_{t}(L^{p})}2^{-j''}\|\Delta_{j''}v\|_{L^{\infty}_{t}(L^{p})}\nonumber\\
    &\lesssim 2^{(2+\frac{n}{p})j}\sum_{j'\geq j-N_{0}}2^{-(1+\frac{2n}{p})j'}2^{(-1+\frac{n}{p})j'}
    \|\Delta_{j'}u\|_{L^{\infty}_{t}(L^{p})}2^{(-1+\frac{n}{p})j'}\|\Delta_{j'}v\|_{L^{\infty}_{t}(L^{p})}\nonumber\\
     &\lesssim2^{(1-\frac{n}{p})j}\sum_{j'\geq j-N_{0}}2^{-(1+\frac{2n}{p})(j'-j)}2^{(-1+\frac{n}{p})j'}
    \|\Delta_{j'}u\|_{L^{\infty}_{t}(L^{p})}
    \|v\|_{L^{\infty}_{t}(\dot{B}^{-1+\frac{n}{p}}_{p,1})}.
\end{align}
\begin{align}\label{eq4.9}
    \|\Delta_{j}K_{3}\|_{L^{\infty}_{t}(L^{p})}&\lesssim 2^{(1+\frac{n}{p})j}\sum_{j'\geq j-N_{0}}\sum_{|j'-j''|\leq1}
    \|(-\Delta)^{-1}\Delta_{j'}u\|_{L^{\infty}_{t}(L^{p})}\|\Delta_{j''}v\|_{L^{\infty}_{t}(L^{p})}\nonumber\\
    &\lesssim 2^{(1+\frac{n}{p})j}\sum_{j'\geq j-N_{0}}2^{-\frac{2n}{p}j'}2^{(-1+\frac{n}{p})j'}
    \|\Delta_{j'}u\|_{L^{\infty}_{t}(L^{p})}2^{(-1+\frac{n}{p})j'}\|\Delta_{j'}v\|_{L^{\infty}_{t}(L^{p})}\nonumber\\
    &\lesssim 2^{(1-\frac{n}{p})j}\sum_{j'\geq j-N_{0}}2^{(-\frac{2n}{p})(j'-j)}2^{(-1+\frac{n}{p})j'}
    \|\Delta_{j'}u\|_{L^{\infty}_{t}(L^{p})}\|v\|_{L^{\infty}_{t}(\dot{B}^{-1+\frac{n}{p}}_{p,1})}.
\end{align}
In the case that $1\leq p<2$, there exists $2\leq p'\leq\infty$ such that $\frac{1}{p}+\frac{1}{p'}=1$ such that
\begin{align}\label{eq4.10}
    \|\Delta_{j}K_{1}\|_{L^{\infty}_{t}(L^{p})}&\lesssim 2^{(2+n-\frac{n}{p})j}\sum_{j'\geq j-N_{0}}\sum_{|j'-j''|\leq1}
    \|(-\Delta)^{-1}\Delta_{j'}u\|_{L^{\infty}_{t}(L^{p'})}\|\partial_{m}(-\Delta)^{-1}\Delta_{j''}v\|_{L^{\infty}_{t}(L^{p})}\nonumber\\
    &\lesssim 2^{(2+n-\frac{n}{p})j}\sum_{j'\geq j-N_{0}}\sum_{|j'-j''|\leq1}2^{(-2+n(\frac{1}{p}-\frac{1}{p'}))j'}
    \|\Delta_{j'}u\|_{L^{\infty}_{t}(L^{p})}2^{-j''}\|\Delta_{j''}v\|_{L^{\rho_{1}}_{t}(L^{p})}\nonumber\\
    &\lesssim 2^{(2+n-\frac{n}{p})j}\sum_{j'\geq j-N_{0}}2^{-(n+1)j'}2^{(-1+\frac{n}{p})j'}
    \|\Delta_{j'}u\|_{L^{\infty}_{t}(L^{p})}2^{(-1+\frac{n}{p})j'}\|\Delta_{j'}v\|_{L^{\rho_{1}}_{t}(L^{p})}\nonumber\\
   & \lesssim 2^{(1-\frac{n}{p})j}\sum_{j'\geq j-N_{0}}2^{-(n+1)(j'-j)}2^{(-1+\frac{n}{p})j'}
    \|\Delta_{j'}u\|_{L^{\infty}_{t}(L^{p})}\|v\|_{L^{\infty}_{t}(\dot{B}^{-1+\frac{n}{p}}_{p,1})}.
\end{align}
\begin{align}\label{eq4.11}
    \|\Delta_{j}K_{3}\|_{L^{\infty}_{t}(L^{p})}&\lesssim 2^{(1+n-\frac{n}{p})j}\sum_{j'\geq j-N_{0}}\sum_{|j'-j''|\leq1}
    \|(-\Delta)^{-1}\Delta_{j'}u\|_{L^{\infty}_{t}(L^{p'})}\|\Delta_{j''}v\|_{L^{\infty}_{t}(L^{p})}\nonumber\\
    &\lesssim 2^{(1+n-\frac{n}{p})j}\sum_{j'\geq j-N_{0}}2^{-nj'}2^{(-1+\frac{n}{p})j'}
    \|\Delta_{j'}u\|_{L^{\infty}_{t}(L^{p})}2^{(-1+\frac{n}{p})j'}\|\Delta_{j'}v\|_{L^{\infty}_{t}(L^{p})}\nonumber\\
    &\lesssim 2^{(1-\frac{n}{p})j}\sum_{j'\geq j-N_{0}}2^{-n(j'-j)}2^{(-1+\frac{n}{p})j'}
    \|\Delta_{j'}u\|_{L^{\infty}_{t}(L^{p})}\|v\|_{L^{\infty}_{t}(\dot{B}^{-1+\frac{n}{p}}_{p,1})}.
\end{align}
Thus, putting the above estimates \eqref{eq4.8}--\eqref{eq4.11} together,  we obtain for all $1\leq p<\infty$,
\begin{align}\label{eq4.12}
      \| I_{3}\|_{\widetilde{L}^{\infty}_{t}(\dot{B}^{-1+\frac{n}{p}}_{p,1})}\lesssim
       \|u\|_{\widetilde{L}^{\infty}_{t}(\dot{B}^{-1+\frac{n}{p}}_{p,1})}\|v\|_{\widetilde{L}^{\infty}_{t}(\dot{B}^{-1+\frac{n}{p}}_{p,1})}.
\end{align}
Combining \eqref{eq4.5}, \eqref{eq4.7} and \eqref{eq4.12}, we conclude that \eqref{eq4.3} holds. The proof of Lemma \ref{le4.2} is complete.
\end{proof}

\medskip

Based on Proposition \ref{pro4.1} and Lemma \ref{le4.2}, consider the mapping \eqref{eq3.16},   we obtain
\begin{align}\label{eq4.13}
      \| \mathbb{F}(u)\|_{\widetilde{L}^{\infty}_{t}(\dot{B}^{-1+\frac{n}{p}}_{p,1})}&\lesssim \|u_{0}\|_{\dot{B}^{-1+\frac{n}{p}}_{p,1}}+
       \|u\nabla(-\Delta)^{-1}u\|_{\widetilde{L}^{\infty}_{t}(\dot{B}^{-1+\frac{n}{p}}_{p,1})}\nonumber\\
       &\lesssim \|u_{0}\|_{\dot{B}^{-1+\frac{n}{p}}_{p,1}}+
       \|u\|_{\widetilde{L}^{\infty}_{t}(\dot{B}^{-1+\frac{n}{p}}_{p,1})}^{2}.
\end{align}
Thus,  if $\|u_{0}\|_{\dot{B}^{-1+\frac{n}{p}}_{p,1}}$ is sufficiently small,  we can prove that $\mathbb{F}$ is a contraction mapping from some suitable metric space into itself, which implies that the system
\eqref{eq1.1} admits a unique solution in $\widetilde{L}^{\infty}_{t}(\dot{B}^{-1+\frac{n}{p}}_{p,1}(\mathbb{R}^{n}))$. The proof is complete, as desired.

\subsection{The case  $1< p<\infty$: Gevrey analyticity}
Note that when $\alpha=1$, the dissipation term $e^{-t\Lambda}$ is not strong enough to overcome the operator $e^{t\Lambda_{1}}$. Therefore, we need to define the Gevrey operator more carefully. Let
\begin{equation*}
   U(t): =e^{\frac{1}{2n}t\Lambda_{1}}u(t).
\end{equation*}
Then $U(t)$ satisfies the following integral equation
\begin{equation}\label{eq4.14}
   U(t)=e^{\frac{1}{2n}t\Lambda_{1}-t\Lambda}u_{0}-\int_{0}^{t}\left[e^{\frac{1}{2n}t\Lambda_{1}-(t-\tau)\Lambda}\nabla\cdot
  \left (e^{-\frac{1}{2n}\tau\Lambda_{1}}U\cdot e^{-\frac{1}{2n}\tau\Lambda_{1}}\nabla(-\Delta)^{-1}U\right)\right](\tau)d\tau.
\end{equation}
Notice that the operator $e^{\frac{1}{2n}t\Lambda_{1}-\frac{1}{2}t\Lambda}$
is a Fourier multiplier which maps uniformly bounded from $L^{p}(\mathbb{R}^{n})$ to $L^{p}(\mathbb{R}^{n})$ for $1<p<\infty$. Moreover, its operator norm is uniformly bounded with respect to any $t\geq0$ because the symbol $e^{\frac{1}{2n}t|\xi|_{1}-\frac{1}{2}t|\xi|}$ is uniformly bounded and decays exponentially for all $|\xi|\geq1$. Therefore, by Proposition \ref{pro4.1},  the linear term can be treated with
\begin{equation}\label{eq4.15}
   \|e^{\frac{1}{2n}t\Lambda_{1}-t\Lambda}u_{0}\|_{\widetilde{L}^{\infty}_{t}(\dot{B}^{-1+\frac{n}{p}}_{p,1})}\lesssim
   \|e^{-\frac{1}{2}t\Lambda}u_{0}\|_{\widetilde{L}^{\infty}_{t}(\dot{B}^{-1+\frac{n}{p}}_{p,1})}\lesssim\|u_{0}\|_{\dot{B}^{-1+\frac{n}{p}}_{p,1}}.
\end{equation}
For the nonlinear term, we rewrite
$$
  e^{\frac{1}{2n}t\Lambda_{1}-(t-\tau)\Lambda}=e^{\frac{1}{2n}(t-\tau)\Lambda_{1}-(t-\tau)\Lambda}e^{\frac{1}{2n}\tau\Lambda_{1}}.
$$
Thus, based on the nice boundedness properties of  the operator $e^{\frac{1}{2n}t\Lambda_{1}-\frac{1}{2}t\Lambda}$ and the bilinear operator $\mathcal{\widetilde{B}}_{t}(f,g)$ of the form
\begin{align*}
  \mathcal{\widetilde{B}}_{t}(f,g)&:=e^{\frac{1}{2n}t\Lambda_{1}}(e^{-\frac{1}{2n}t\Lambda_{1}}fe^{-\frac{1}{2n}t\Lambda_{1}}g),
\end{align*}
we can proceed along the lines of the proof of  Lemma \ref{le4.2} to obtain the Gevrey analyticity of the solution. Indeed, the bilinear operator $\mathcal{\widetilde{B}}_{t}(f,g)$ has a similar expression as \eqref{eq3.31}, moreover, the corresponding operators $\widetilde{K}_{\varsigma}$  and $\widetilde{Z}_{t,\varsigma,\mu}$ are bounded linear operators on $L^{p}(\mathbb{R}^{n})$ for $1<p<\infty$, and the corresponding operator norm of $\widetilde{Z}_{t,\varsigma,\mu}$ is bounded independent of $t\geq0$, thus, for $1<p, p_{1}, p_{2}<\infty$, we still have
\begin{equation*}
   \|\mathcal{\widetilde{B}}_{t}(f,g)\|_{L^{p}}\lesssim \|\widetilde{Z}_{t,\varsigma,\mu}f\widetilde{Z}_{t,\varsigma,\nu}g\|_{L^{p}}\lesssim\|f\|_{L^{p_{1}}}\|g\|_{L^{p_{2}}}\ \ \ \text{with}\ \ \ \frac{1}{p_{1}}+\frac{1}{p_{2}}=\frac{1}{p}.
\end{equation*}
This completes the proof, as desired.

\subsection{The case  $p=\infty$: Well-posedness}
Note that the resolution space $\widetilde{L}^{\infty}_{t}(\dot{B}^{-1}_{\infty,1}(\mathbb{R}^{n}))$ can not be adapted to the system \eqref{eq1.1} when $p=\infty$. Therefore, in the case  $p=\infty$, we consider the resolution space  $\widetilde{L}^{\infty}_{t}(\dot{B}^{-1}_{\infty,1}(\mathbb{R}^{n}))\cap\widetilde{L}^{1}_{t}(\dot{B}^{0}_{\infty,1}(\mathbb{R}^{n}))$.
Firstly, from Proposition \ref{pro4.1}, we  see that
\begin{equation}\label{eq4.16}
     \|e^{-t\Lambda}u_{0}\|_{\widetilde{L}^{\infty}_{t}(\dot{B}^{-1}_{\infty,1})\cap\widetilde{L}^{1}_{t}(\dot{B}^{0}_{\infty,1})}\lesssim
     \|u_{0}\|_{\dot{B}^{-1}_{\infty,1}}.
\end{equation}
Secondly, from Lemma \ref{le3.7}, we get
   \begin{align}\label{eq4.17}
     \|u\nabla(-\Delta)^{-1}u\|_{\widetilde{L}^{1}_{t}(\dot{B}^{0}_{\infty,1})}&\lesssim
     \|u\|_{\widetilde{L}^{\infty}_{t}(\dot{B}^{-1}_{\infty,1})\cap\widetilde{L}^{1}_{t}(\dot{B}^{0}_{\infty,1})}^{2}.
\end{align}
Hence, consider the mapping \eqref{eq3.16}, we deduce from  Proposition \ref{pro4.1},  \eqref{eq4.16} and \eqref{eq4.17} that
\begin{align}\label{eq4.18}
      \| \mathbb{F}(u)\|_{\widetilde{L}^{\infty}_{t}(\dot{B}^{-1}_{\infty,1})\cap\widetilde{L}^{1}_{t}(\dot{B}^{0}_{\infty,1})}&\lesssim \|u_{0}\|_{\dot{B}^{-1}_{\infty,1}}+
       \|u\nabla(-\Delta)^{-1}u\|_{\widetilde{L}^{1}_{t}(\dot{B}^{0}_{\infty,1})}\nonumber\\
       &\lesssim \|u_{0}\|_{\dot{B}^{-1}_{\infty,1}}+
       \|u\|_{\widetilde{L}^{\infty}_{t}(\dot{B}^{-1}_{\infty,1})\cap\widetilde{L}^{1}_{t}(\dot{B}^{0}_{\infty,1})}^{2}.
\end{align}
This reveals that, through the standard contraction mapping argument, if $\|u_{0}\|_{\dot{B}^{-1}_{\infty,1}}$ is sufficiently small, then $\mathbb{F}$ is a contraction mapping from some suitable metric space into itself, which means that the system
\eqref{eq1.1} admits a unique solution in $\widetilde{L}^{\infty}_{t}(\dot{B}^{-1}_{\infty,1}(\mathbb{R}^{n}))\cap\widetilde{L}^{1}_{t}(\dot{B}^{0}_{\infty,1}(\mathbb{R}^{n}))$.  The proof is complete, as desired.

\subsection{The case $p=\infty$: Gevrey analyticity}
To treat the Gevrey analyticity of solution in the case $p=\infty$, it suffices to prove that the following a priori estimate holds:
\begin{equation}\label{eq4.19}
    \|e^{\frac{1}{2n}t\Lambda_{1}}u(t)\|_{\widetilde{L}^{\infty}_{t}(\dot{B}^{-1}_{\infty,1})\cap\widetilde{L}^{1}_{t}(\dot{B}^{0}_{\infty,1})}
       \lesssim \|u_{0}\|_{\dot{B}^{-1}_{\infty,1}}+
       \|e^{\frac{1}{2n}t\Lambda_{1}}u(t)\|_{\widetilde{L}^{\infty}_{t}(\dot{B}^{-1}_{\infty,1})\cap\widetilde{L}^{1}_{t}(\dot{B}^{0}_{\infty,1})}^{2}.
\end{equation}
Since the symbol $e^{\frac{1}{2n}t|\xi|_{1}-\frac{1}{2}t|\xi|}$ is uniformly bounded in $\mathbb{R}^{n}$ and decays exponentially for sufficiently large $|\xi|\gg1$ with respect to all $t\geq0$, the estimation of linear part is straightforward due to  the fact that when localized in dyadic blocks in the Fourier spaces, the operator $e^{\frac{1}{2n}t\Lambda_{1}-\frac{1}{2}t\Lambda}$
 maps uniformly bounded from $L^{\infty}$ to $L^{\infty}$ with respect to $t\geq0$.  Thus,
\begin{equation}\label{eq4.20}
   \|e^{\frac{1}{2n}t\Lambda_{1}-t\Lambda}u_{0}\|_{\widetilde{L}^{\infty}_{t}(\dot{B}^{-1}_{\infty,1})\cap\widetilde{L}^{1}_{t}(\dot{B}^{0}_{\infty,1})}\lesssim
   \|e^{-\frac{1}{2}t\Lambda}u_{0}\|_{\widetilde{L}^{\infty}_{t}(\dot{B}^{-1}_{\infty,1})\cap\widetilde{L}^{1}_{t}(\dot{B}^{0}_{\infty,1})}\lesssim\|u_{0}\|_{\dot{B}^{-1}_{\infty,1}}.
\end{equation}
For the nonlinear part,  following the proofs of Lemmas \ref{le3.6} and \ref{le3.7}, the only difficulty arises from the following bilinear operator $\mathcal{\widetilde{B}}_{t}(f,g)$ of the form
\begin{align*}
  \mathcal{\widetilde{B}}_{t}(f,g)&=e^{\frac{1}{2n}t\Lambda_{1}}(e^{-\frac{1}{2n}t\Lambda_{1}}fe^{-\frac{1}{2n}t\Lambda_{1}}g)
\end{align*}
is not bounded from $L^{\infty}\times L^{\infty}$ to $L^{\infty}$, more precisely, the corresponding operators  $\widetilde{K}_{\varsigma}$  and $\widetilde{Z}_{t,\varsigma, \mu}$ in \eqref{eq3.31} do not map $L^{\infty}$ to  $L^{\infty}$ uniformly bounded. However, when localized in dyadic blocks in the Fourier spaces, these operators are bounded in $L^{\infty}$.
Therefore, we can follow the calculations line by line from \eqref{eq3.34} to \eqref{eq3.38}  with $\alpha=1$  in the proof of Lemma \ref{le3.7} to complete the estimation of the nonlinear term, which along with \eqref{eq4.20}, we arrive at \eqref{eq4.19}. The proof is complete, as desired.
\subsection{Decay rate of solution}
In this subsection, we show the decay rate estimates of solutions obtained in Theorem \ref{th1.2}. Based on Lemma \ref{le3.8}, we can show that
for all $\sigma\geq 0$, the operator $\Lambda^{\sigma}e^{-\frac{1}{2n}t\Lambda_{1}}$ is the convolution operator with a kernel $K_{\sigma}(t)\in L^{1}(\mathbb{R}^{n})$ for all $t>0$. Moreover,
\begin{equation}\label{eq4.21}
  \|K_{\sigma}(t)\|_{L^{1}}\leq \widetilde{C}_{\sigma}t^{-\sigma},
\end{equation}
where $\widetilde{C}_{\sigma}=\|\Lambda^{\sigma}e^{-\frac{1}{2n}\Lambda_{1}}\|_{L^{1}}$.
Now we know that the existence parts of Theorem \ref{th1.2} imply that if  $u_{0}\in \dot{B}^{-1+\frac{n}{p}}_{p,1}(\mathbb{R}^{n})$ ($1<p\leq\infty$) is sufficiently small, then the solution is  in the Gevrey class. Consequently, for all $\sigma\geq0$,
applying \eqref{eq4.21},  we get
\begin{align}\label{eq4.22}
  \|\Lambda^{\sigma}u(t) \|_{\dot{B}^{-1+\frac{n}{p}}_{p,1}}&
  =\|\Lambda^{\sigma}e^{-\frac{1}{2n}t\Lambda_{1}}e^{\frac{1}{2n}t\Lambda_{1}}u(t)\|_{\dot{B}^{-1+\frac{n}{p}}_{p,1}}\nonumber\\
   &\leq \widetilde{C}_{\sigma}t^{-\sigma}
  \|e^{\frac{1}{2n}t\Lambda_{1}}u(t)\|_{\dot{B}^{-1+\frac{n}{p}}_{p,1}}\nonumber\\
  &\leq \widetilde{C}_{\sigma}t^{-\sigma}
  \|u_{0}\|_{\dot{B}^{-1+\frac{n}{p}}_{p,1}}.
\end{align}
We complete the proof of Theorem \ref{th1.2}, as desired.

\medskip

\noindent{\bf Acknowledgements}.
The author would like to thank Prof. Song Jiang for his hospitality and Prof. Qiao Liu for many helpful discussions and suggestions.

\end{document}